\documentclass[12pt]{article}

\usepackage{epsfig,amsmath,amssymb,graphics,verbatim,amsfonts,subfigure,psfrag,amsthm}
\usepackage[letterpaper,margin=0.9in]{geometry}
\usepackage{tikz}
\usepackage[percent]{overpic}

\usepackage{enumitem}
\makeatletter
\newcommand{\mylabel}[2]{#2\def\@currentlabel{#2}\label{#1}}
\makeatother

\let\oldbibliography\thebibliography
\renewcommand{\thebibliography}[1]{%
  \oldbibliography{#1}%
  \setlength{\itemsep}{0.5mm}%
}

\newtheorem{thm}{Theorem}[section]

\newtheorem{lem}[thm]{Lemma}

\newtheorem{defn}[thm]{Definition}

\def\R{\mathbb{R}}

\def\N{\mathbb{N}}

\def\I{\infty}


\def\txtd{{\textnormal{d}}}
\def\txte{{\textnormal{e}}}
\def\txth{{\textnormal{h}}}

\def\txtD{{\textnormal{D}}}

\newcommand{\be}{\begin{equation}}
\newcommand{\ee}{\end{equation}}
\newcommand{\bea}{\begin{eqnarray}}
\newcommand{\eea}{\end{eqnarray}}
\newcommand{\beann}{\begin{eqnarray*}}
\newcommand{\eeann}{\end{eqnarray*}}
\newcommand{\benn}{\begin{equation*}}
\newcommand{\eenn}{\end{equation*}}

\DeclareMathSymbol{\leqsymb}{\mathalpha}{AMSa}{"36}

\DeclareMathSymbol{\geqsymb}{\mathalpha}{AMSa}{"3E}

\def\ra{\rightarrow}
\def\I{\infty}

\newcommand{\cA}{{\mathcal A}}  
\newcommand{\cC}{{\mathcal C}}  
\newcommand{\cG}{{\mathcal G}}  
\newcommand{\cK}{{\mathcal K}}  
\newcommand{\cO}{{\mathcal O}}  
\newcommand{\cS}{{\mathcal S}}  




\begin{document}

\title{Generalized Play Hysteresis Operators\\ in Limits of Fast-Slow Systems}

\author{Christian Kuehn\footnotemark[1]~
and Christian M\"unch\footnotemark[2]}
\date{\today}

\maketitle

\renewcommand{\thefootnote}{\fnsymbol{footnote}}

\footnotetext[1]{Technical University of Munich, Faculty of Mathematics, Research Unit 
``Multiscale and Stochastic Dynamics'', 85748 Garching b.~M\"unchen, Germany}
\footnotetext[2]{Technical University of Munich, Faculty of Mathematics, Research Unit 
``Mathematical Modelling'', 85748 Garching b.~M\"unchen, Germany}

\begin{abstract}
Hysteresis operators appear in many applications such as elasto-plasticity
and micromagnetics, and can be used for a wider class of systems, where rate-independent 
memory plays a role. A natural approximation for systems of evolution equations with hysteresis 
operators are fast-slow
dynamical systems, which - in their used approximation form - do not involve any memory
effects. Hence, viewing differential equations with hysteresis operators in the non-linearity 
as a limit of approximating fast-slow dynamics
involves subtle limit procedures. In this paper, we give a proof of Netushil's ``observation'' 
that broad classes of planar fast-slow systems with a two-dimensional critical manifold 
are expected to yield generalized play operators in the singular limit. We 
provide two proofs of this ``observation'' based upon the fast-slow systems paradigm
of decomposition into subsystems. One proof strategy employs suitable convergence 
in function spaces, while the second approach considers a geometric
strategy via local linearization and patching adapted originally from problems in stochastic
analysis. We also provide an illustration of our results
in the context of oscillations in forced planar non-autonomous fast-slow systems. The study
of this example also strongly suggests that new canard-type mechanisms can occur for 
two-dimensional critical manifolds in planar systems.
\end{abstract}

\textbf{Keywords:} Fast-slow system, multiple time scale dynamics, hysteresis operator,
generalized play, canard, Netushil's observation.

\section{Introduction}
\label{sec:intro}

In this (non-technical) introduction, we are going to outline the main topic. 
We also present the main result and proof strategy used in this paper.\medskip 

Depending upon the context, the term 'hysteresis' is used in technically 
different, yet strongly related, forms. Classical results in magnetic 
materials~\cite{StonerWohlfarth,DellaTorre} and mechanical 
systems~\cite{Love,MuellerXu} refer to hysteresis as the description of memory effects, 
particularly with a focus on rate-independent memory~\cite{MielkeTheil}; see 
Section~\ref{ssec:hysteresis} for the relevant mathematical definitions for this work
and~\cite{Mayergoyz,BrokateSprekels,KrasnoselskiiPokrovskii,Visintin,MielkeRoubicek} for 
further background literature. One observation for several classes of dynamical 
systems defined by \emph{hysteresis operators} is that trajectories often form 
'loops' in phase space as shown in Figure~\ref{fig:01}(a). A second common use of 
hysteresis is to describe a system exhibiting switching between two locally stable 
states upon parameter variation. A typical situation occurs when two fold 
bifurcations~\cite{GH} are connected in an S-shaped curve as shown in 
Figure~\ref{fig:01}(b). This \emph{hysteresis effect} has been found and described 
in essentially all disciplines involving mathematical models ranging from 
neuroscience~\cite{FitzHugh}, geoscience~\cite{GanopolskiRahmstorf}, 
engineering~\cite{vanderPol1}, solid-state physics~\cite{Strogatz}, 
ecology~\cite{BeisnerHaydonCuddington} to economics~\cite{Cross}, 
just to name a few; see also the references in~\cite{KuehnBook}. A suitable 
mathematical formulation to approximate systems, in which this hysteresis effect appears, 
are fast-slow ordinary differential equations (ODEs), where parameters are viewed as 
slowly-driven variables. Also in this case, limit cycles are frequently 
observed, e.g., slowly moving through an S-shaped bifurcation diagram 
can lead to relaxation oscillations; see also Section~\ref{ssec:fastslow} 
for more precise definitions and background on geometric singular 
perturbation theory (GSPT) for fast-slow systems~\cite{KuehnBook}.\medskip 

\begin{figure}[htbp]
	\centering
\begin{overpic}[width=0.75\linewidth]{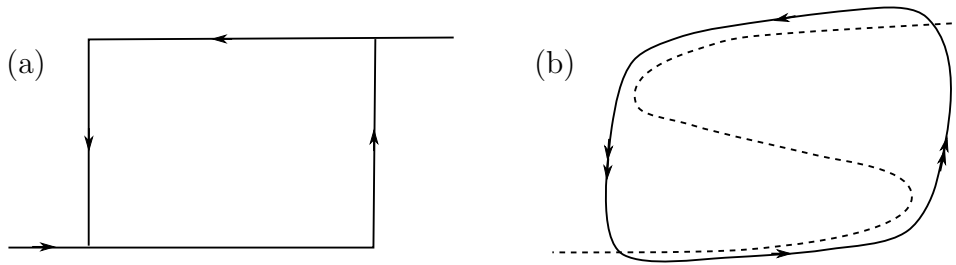}
\put(0,20){(a)}
\put(55,20){(b)}
\end{overpic}
\caption{\label{fig:01}Sketch of two hysteresis models. (a) Trajectory for 
		a \emph{hysteresis operator} with memory; note that if we start at the lower left point, 
		then the trajectory does not make sense as a trajectory of a planar ODE as it would violate 
		local uniqueness. (b) \emph{Hysteresis behaviour} given by a relaxation oscillation (solid 
		curve) in a fast-slow system. The S-shaped critical manifold (dashed curve) can also be 
		interpreted as one limiting part of the dynamics in the infinite-time scale separation 
		limit, i.e., there are three branches of steady states, which meet at two fold bifurcations.}
\end{figure}

Although these two uses of \emph{hysteresis} differ in their mathematical setup, it is 
evident from Figure~\ref{fig:01} that one might expect at least some relation between 
the model classes for certain cases. The ``observation'' that certain hysteresis operators
can be obtained as a singular limit from fast-slow dynamics is often 
attributed~\cite{PokrovskiiSobolev,MortellOMalleyPokrovskiiSobolev} 
to Netushil~\cite{Netushil1,Netushil2}.   
However, taking limits directly to relate fast-slow dynamics to systems with 
hysteresis operators is difficult as the limits are singular, in the sense that we have to bridge via 
the limiting process two different classes of equations. This difficulty is one 
possible motivation to look at the singular limit by exploiting gradient-type 
structures~\cite{MielkeRoubicek}, if they are available, or going deeper into the
theory of differential inclusions. Albeit yielding limits for certain subclasses of
fast-slow systems, these approaches tend to make very strong assumptions on the 
structure of the system and, more importantly, the interpretation and proof strategies
provided by GSPT, i.e., guided by the individual decomposition of trajectories,
are lost. Hence, we are going to refer to this situation more precisely as 
Netushil's conjecture, i.e., how one can \emph{rigorously unify GSPT and the abstract 
hysteresis operator viewpoint}, without making too stringent assumptions on the 
structure of the fast-slow system; see Sections 3 \& 4 for the precise statements.\medskip 

In this work, we give two rigorous proofs of Netushil's conjecture for a very general class
of planar fast-slow systems, which have \emph{two-dimensional} critical 
manifolds~\cite{PokrovskiiSobolev}; see also Section~\ref{sec:main1} 
for the technical statement of Netushil's conjecture. We highlight that in our setting,
we allow for a full coupling of fast and slow variables. We
establish our results via two different techniques. The first approach is more 
functional-analytic, yet carefully exploits the fast-slow decomposition. The second
approach is very geometric and transfers a proof-strategy initially developed for
stochastic fast-slow systems to the deterministic hysteresis situation. More precisely, 
we show how to obtain the generalized play operator, which can be viewed as a model 
for perfect plasticity. In addition, to the overall GSPT approach, each of the two 
proofs also contains further technical advances, e.g., the use of suitable projectors
to deal with small-scale oscillations or a careful matching argument near two-dimensional
critical manifolds in planar systems. In summary, our results aim to provide a 
better bridge between two adjacent areas of mathematical modelling, and to provide a 
step in establishing Netushil's conjecture, i.e., to link GSPT and hysteresis operators 
in even more generality; see Section~\ref{ssec:hysteresis} for background and 
comparison of our results to other approaches.\medskip

The paper is structured as follows. In Section~\ref{sec:background}, we explain 
the necessary background from fast-slow systems and hysteresis operators. We also 
provide a brief introduction to motivate our approach and compare it to other possible 
approaches not using GSPT and fast-slow decomposition. In Section~\ref{sec:main1}, 
we state the first version of our main result and provide a more functional-analytic 
decomposition proof. In Section~\ref{sec:main2}, we state a variant of the 
convergence result, which provides stronger convergence under stronger 
assumptions. The proof of this result follows a different strategy patching 
fast dynamics geometrically. In Section~\ref{sec:appl}, we consider an 
application of our results to a prototypical periodically forced planar fast-slow 
system, which displays interesting oscillatory patterns. This example also illustrates
the possible occurrence and the role of canard trajectories.\medskip

\textbf{Remark on notation:} If not further specified, we always write 
$\|\cdot\|=\|\cdot\|_2$ for the Euclidean norm. The same remark applies to matrix
norms and metrics on finite-dimensional spaces, they will all be understood in the 
Euclidean sense unless specified otherwise. 

\section{Background}
\label{sec:background}

In this section we provide some necessary background for the two main areas in this paper. 
We hope that this is going to make the results more accessible for experts in different 
fields. We restrict ourselves here to the case of planar systems to simplify the notation 
and present the main ideas. The techniques we present are expected to generalize to 
several classes of higher-dimensional systems.
  
\subsection{Fast-Slow Systems}
\label{ssec:fastslow}

Let $(x,y)\in\R^2$ and consider the planar fast-slow system
\be
\label{eq:fs}
\begin{array}{rcrcl}
\varepsilon \frac{\txtd x}{\txtd t} &=& \varepsilon \dot{x} &=& f(x,y;\varepsilon),\\
\frac{\txtd y}{\txtd t} &=& \dot{y} &=& g(x,y;\varepsilon),
\end{array}
\ee
where $\varepsilon>0$ is a small parameter indicating the time-scale separation 
between the fast variable $x$ and the slow variable $y$. For now we shall assume 
that $f:\R^3\ra \R$ and $g:\R^3\ra \R$ are sufficiently smooth but for our main 
setting we shall considerably weaken this assumption and also allow the slow vector
field to be non-autonomous. A classical goal in fast-slow systems is to understand 
the dynamics of~\eqref{eq:fs} for sufficiently small $\varepsilon$ by 
showing persistence results from the singular limit $\varepsilon\ra 0$. 
Setting $\varepsilon=0$ in~\eqref{eq:fs} yields the slow subsystem
\be
\label{eq:slowsub}
\begin{array}{rcl}
0 &=& f(x,y;0),\\
\dot{y} &=& g(x,y;0),
\end{array}
\ee
which is a differential-algebraic equation on the critical set 
\be
\cC_0:=\{(x,y)\in\R^2:f(x,y;0)=0\}.
\ee
Therefore, the slow subsystem~\eqref{eq:slowsub} only covers the dynamics in a part 
of phase space. Another possibility is to consider the fast-slow system~\eqref{eq:fs} 
on the fast time scale $s:=\varepsilon t$ instead of the slow time scale $t$. 
Taking the singular limit on the fast time scale leads to the fast subsystem  
\be
\label{eq:fastsub}
\begin{array}{rcrcl}
\frac{\txtd x}{\txtd s} &=& x' &=& f(x,y;0),\\
\frac{\txtd y}{\txtd s} &=& y' &=& 0,\\
\end{array}
\ee
which is a parametrized differential equation, where the slow variable $y$ acts as 
a parameter. The fast and slow subsystems are different types of differential 
equations, which illustrates the singular perturbation character of fast-slow systems. 
For~\eqref{eq:fastsub} the critical set $\cC_0$ consists of the equilibrium points (or 
steady states), while it is a constraint for~\eqref{eq:slowsub}.\medskip

Several approaches to analyze fast-slow systems exist~\cite{KuehnBook}. Probably
the most classical technique is to use asymptotic methods, such as matched 
asymptotic expansions~\cite{MisRoz,KevorkianCole,BenderOrszag}. In this approach,
we aim to write the solution as
\benn
(x(s),y(s))\sim \left(\sum_{k=1}^Kx_k(s)a_k(\varepsilon),\sum_{k=1}^Ky_k(s)
b_k(\varepsilon)\right)+\cO(b_{k+1}(\varepsilon),a_{k+1}(\varepsilon)), 
\qquad \text{as $\varepsilon\ra 0$,}
\eenn  
for some $K\in\N$, where $\{a_k(\varepsilon)\}_{k=1}^\I,\{b_k(\varepsilon)\}_{k=1}^\I$
are asymptotic sequences; the simplest example are polynomials 
$a_k(\varepsilon)=\varepsilon^k=b_k(\varepsilon)$. In asymptotic analysis, one
usually has to \emph{match} different series by combining solutions obtained of
a slow time scale from~\eqref{eq:slowsub} and via a fast time scale from~\eqref{eq:fastsub}.
In particular, the crucial point is to exploit the decomposition algebraically.\medskip

More recently, the development of geometric singular perturbation theory (GSPT)
has provided very significant additional \emph{geometric insight}, how we may
combine~\eqref{eq:slowsub}-\eqref{eq:fastsub}. In this work, we restrict to the 
case when the critical set $\cC_0$ is a manifold but 
for other cases see~\cite{KruSzm4,Schecter,KuehnSzmolyan}. Consider a compact 
submanifold $\cS_0\subset \cC_0$. Then $\cS_0$ is called normally 
hyperbolic~\cite{Fenichel4,Jones,Kaper} if $\partial_x f(p;0)\neq 0$ for all $p\in\cS_0$; 
for a higher-dimensional fast variable, normal hyperbolicity is defined by requiring that 
the matrix $\txtD_x f(p;0)$ is a hyperbolic matrix. Fenichel's 
Theorem~\cite{Fenichel4,Jones,WigginsIM} guarantees that $\cS_0$ perturbs to a locally 
invariant slow manifold $\cS_\varepsilon$, which is $\cO(\varepsilon)$-close 
to $\cS_0$. The dynamics on $\cS_\varepsilon$ is conjugate to 
the dynamics on $\cS_0$, which converts the singular perturbation problem to a regular 
perturbation problem near a normally hyperbolic critical manifold. In this view,
asymptotic matching becomes geometric matching of singular limit trajectories from 
the fast and slow subsystems; e.g., in Figure~\ref{fig:01}(b), we have to match two fast
segments with two slow segments near the two fold points, where the critical 
manifold loses normal hyperbolicity~\cite{KuehnBook}.\medskip

In summary, the crucial point of this discussion for the current work is, that
modern GSPT, as well as more algebraic asymptotic matching, crucially exploit the views
of \emph{decomposition}, \emph{scaled subsystems}, and \emph{matching regions/points}. 
There is a very detailed literature available on many further important topics in 
fast-slow systems utilizing this strategy and we refer to the literature review 
in~\cite{KuehnBook} for a broader view of the area.\medskip

\begin{figure}[htbp]
	\centering
\begin{overpic}[width=0.8\linewidth]{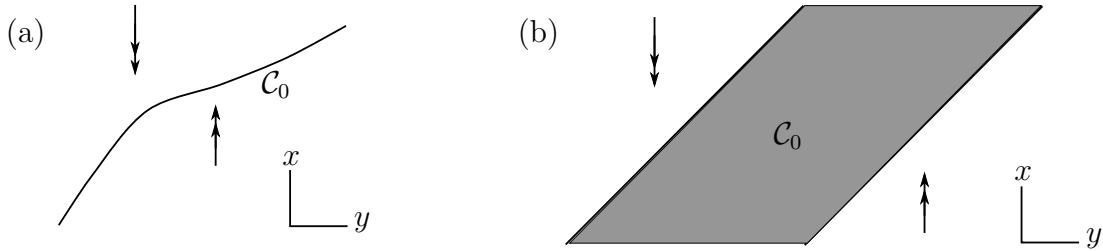}
\put(-5,20){(a)}
\put(45,20){(b)}
\put(29,2){$y$}
\put(22,8){$x$}
\put(100.5,0.5){$y$}
\put(93.5,6.5){$x$}
\put(70,10){$\cC_0$}
\put(20,15){$\cC_0$}
\end{overpic}
\caption{\label{fig:02}Sketch of two possible configurations of the critical manifold
		$\cC_0$ for planar fast-slow systems. (a) Classical situation with a dimension one (and
		codimension one) critical manifold. (b) Situation discussed in this paper, when $\cC_0$ 
		has the same dimension as the ambient phase space, i.e., dimension two and codimension 
		zero; see also equation~\eqref{eq:exdegen}.}
\end{figure}

One crucial aspect of the critical manifold in planar systems is that it is generically 
of dimension one (or even the empty set) if it is defined via the zeros of a generic smooth 
mapping $f$. However, 
the disadvantage is that we cannot model two-dimensional constrained dynamics in this context 
in planar fast-slow systems. For example, consider the fast variable vector field
\be
\label{eq:exdegen}
f(x,y;\varepsilon):=
\left\{\begin{array}{ll}
-x + y & \text{ if $y>x$,}\\
0 & \text{ if $x-1<y<x$,}\\
-x + y + 1 & \text{ if $y<x-1$.}\\
\end{array}\right.
\ee     
The critical manifold $\cC_0=\{(x,y)\in\R^2:x-1<y<x\}$ is a two-dimensional strip. The 
classical normal hyperbolicity assumption does not hold for $\cC_0$ since the fast vector 
field is identically zero inside the strip. In particular, one may ask, what happens in 
the singular limit $\varepsilon\ra 0$ to the flow of the fast-slow system~\eqref{eq:fs} 
with fast vector field similar to~\eqref{eq:exdegen}. Netushil's 
observation~\cite[Sec.~1.1.2]{PokrovskiiSobolev} is that hysteresis operators 
play a key role to capture the singular limit in this situation.\medskip

One major obstacle to carry out GSPT directly using Fenichel's Theorem
and related results is that one has to solve the problem in another
limiting direction, which is not customary for the GSPT approach. More precisely, 
the limit $\varepsilon= 0$ is unknown, whereas it is usually \emph{assumed} in GSPT
that it is easier to analyze the fast and slow subsystems, i.e., usually 
the difficult part is to control the case $0<\varepsilon\ll 1$. In the context
of Netushil's observation, the geometry is significantly worse as we only have
$\partial \cC_0$ as the main geometric object available with just one-sided 
estimates near $\partial \cC_0$ for the fast dynamics. Additionally, observe that 
we work in the coupled setting of a fast-slow system rather than only with one 
equation with an input function as in the classical statement for Netushil's 
observation. Here we overcome these 
difficulties using a suitable local decomposition of trajectories as well as 
projections to preserve the geometric viewpoint.\medskip

Other approaches to the problem use very different technical approaches, and we
briefly review these approaches in the next section, which also contains some 
basic background on hysteresis operators.

\subsection{Hysteresis Operators}
\label{ssec:hysteresis}

Amongst others, hysteresis effects appear in physics in fields like ferromagnetism, 
ferroelectricity or 
plasticity~\cite{Mayergoyz,BrokateSprekels,KrasnoselskiiPokrovskii,Visintin,MielkeRoubicek}. 
They can also be observed in shape memory effects of certain materials, they are relevant for 
thermostats in engineering~\cite{Visintin}, and are used for modelling of certain systems
in mathematical biology~\cite{GurevichShaminTikhomirov,HoppensteadtJaeger,Kopfova,
Pimenovetal,CurranGurevichTikhomirov}. Mathematically, these effects can be described 
by hysteresis operators such as the scalar play~\cite{BrokateKrejci}, the scalar 
stop~\cite{BrokateRachinskii} or the Prandtl-Ishlinski\v{\i} operator~\cite{Kuhnen}, 
to only name a few.\medskip 

We focus on scalar hysteresis operators in this paper. Given a time interval $[0,T]$, 
scalar hysteresis operators take an admissible time-dependent input 
function $y:[0,T]\rightarrow \R$ together with an initial value $x_0$ and return 
a time-dependent output function $x=x(y,x_0)$, where we can also view $x$ as a 
map $x:[0,T]\rightarrow\R $ if $x_0$ is fixed. All scalar rate-independent hysteresis 
operators have two properties in common~\cite{Visintin,BrokateSprekels}:
\begin{defn}
\leavevmode
\begin{itemize}
	\item[(Vol)] The output function $x(t)$ at time $t\in [0,T]$ may depend not only on the value of 
	the input function $y(t)$ at time $t$, but on the whole history of $y$ in the interval 
	$[0,t]$. This non-locality in time is often referred to as memory effect, causality or 
	Volterra property: for all $y_1,y_2$ in the domain of the operator, for all initial values
	$x_0$, and any $t\in [0,T]$ it follows that if $y_1=y_2$ in $[0,t]$, then 
	$[x(y_1,x_0)](t)=[x(y_2,x_0)](t)$; cf.~\cite[Chapter~III]{Visintin}. 
	\item[(RI)] The output function $x$ is invariant under time transformations. This means that 
	for any monotone increasing and continuous function $\phi:[0,T]\rightarrow [0,T]$ with 
	$\phi(0)=0$ and $\phi(T)=T$ and for all admissible input functions $y$ it holds
	\benn
	[x(y\circ\phi,x_0)](t)=x(y,x_0)(\phi(t)),\qquad \forall t\in[0,T].
	\eenn
	In~\cite[Chapter III]{Visintin}, the function $\phi$ is also assumed to be bijective,
	i.e., the definitions differ in the literature. For our purpose one may consider either
	definition of admissible time transformations. Invariance under time transformations 
	is also called rate-independence in the literature \cite[Definition 1.2.1]{MielkeRoubicek}.
\end{itemize}
\end{defn}

Furthermore, we recall that hysteresis operators may not be described by planar differential 
equations. To see this, consider an ODE of the form
\benn
\frac{\txtd}{\txtd t}x_i(t)=f(x_i(t),y_i(t)),\ x_i(0)= x_{0,i},\ i\in\{1,2\}
\eenn
for two different input functions $y_1,y_2$ and initial values $x_{0,1},x_{0,2}$. If for a 
given time $t\in [0,T]$ it holds $x_1(t)=x_2(t)$ and if $y_1|_{(t,T]}=y_2|_{(t,T]}$, then 
clearly $x_1|_{[t,T]}=x_2|_{[t,T]}$ no matter if $y_1|_{[0,t]}\neq y_2|_{[0,t]}$ or even 
$x_{0,1}\neq x_{0,2}$. Therefore, the Volterra property cannot be captured in general. The solutions 
$x_1,x_2$ of the ODEs on the remaining time interval $(t,T]$ depend only on the current 
values $x_1(t)$ and $x_2(t)$ at time $t$ and on the behaviour of the input functions 
$y_1$ and $y_2$ in the interval $(t,T]$; see also Figure~\ref{fig:01}(a). Differential equations 
like
\benn
\frac{\txtd}{\txtd t}x(t)=f(x(t),y(t)),\qquad x(0)= x_0,
\eenn
are also not rate-independent. To see this, consider a time transformation $\phi$. Then for 
the solution operator $x=x(y,x_0)$ of the differential equation it holds
\benn
x(y\circ\phi,x_0)(t)=x_0 + \int_{0}^{t} f(x(s),y(\phi(s)))~ \txtd s,
\eenn
whereas we find that
\benn
[x(y,x_0)\circ\phi](t)=x_0 + \int_{0}^{\phi(t)} f(x(s),y(s))~\txtd s
\eenn
for $t\in [0,T]$. In general those two functions do not coincide. In summary, the appearance 
of differential equations with hysteresis as the singular limit of systems of differential 
equations brings along completely new features in the evolution dynamics.\medskip
	
	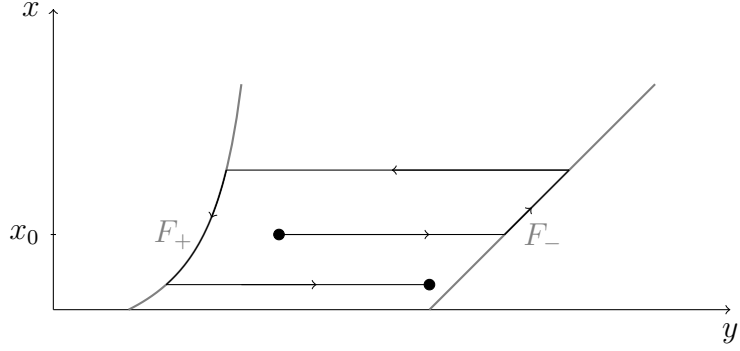
\begin{figure}
		\center
		\begin{tikzpicture}[] 
		\draw[->] (0,0) -- (9,0) coordinate (x axis);
		\draw (-1pt,4 ) node[anchor=east,fill=white] {$x$};
		\draw (-1pt,1 ) node[anchor=east,fill=white] {$x_0$};
		\draw (-1pt,1 )--(1pt,1);
		\draw (9 cm,-1pt) node[anchor=north] {$y$};
		\draw[->] (0,0) -- (0,4) coordinate (x axis);
		\draw [gray, thick, domain=1:2.5] plot (\x, {(\x-1)/(3-\x)});
		\draw [gray] (2 cm,1cm) node[anchor=east] {$F_+$};
		\draw [<-] [black, domain=2.1:2.3] plot (\x, {(\x-1)/(3-\x)});
		\draw [-] [black, domain=1.5:2.3] plot (\x, {(\x-1)/(3-\x)});
		\draw (1.5,1/3) -- (5,1/3);
		\draw [gray, thick, domain=5:((2.5-1)/(3-2.5)+5)] plot (\x, {(\x-5});
		\draw [gray] (6cm +3pt,1cm-1pt) node[anchor=west] {$F_-$};
		\draw [-] [black, domain=6:5+1.3/0.7] plot (\x, {(\x-5});
		\draw [->] [black, domain=6:4.5+1.3/0.7] plot (\x, {(\x-5});
		\draw [<-] [black] (4.5,1.3/0.7) -- (5+1.3/0.7,1.3/0.7);
		\draw [-] [black] (2.3,1.3/0.7) -- (5+1.3/0.7,1.3/0.7);
		\filldraw (5,1/3) circle (2pt);
		\draw [-] [black] (3,1) -- (6,1);
		\filldraw (3,1) circle (2pt);
		\draw [->] [black] (3,1) -- (5,1);
		\draw [->] [black] (2.5,1/3) -- (3.5,1/3);
		\end{tikzpicture}
		\caption{\label{fig:03}Typical finite-time trajectory of $y$ and $x(y,x_0)$ in the 
		$(y,x)$-plane. The initial and final points are marked with black dots. Note that
		we allow quite general curves $F_+,F_-$ as long as the graphs are increasing and 
		Lipschitz.}
	\end{figure}

The class of hysteresis operators represents merely a part of the more general class 
of rate-independent processes. There are various (often equivalent) ways to represent 
such processes. In this paper, we will be concerned with so-called scalar generalized 
play operators which we introduce in the following~\cite[Chapter~III.2]{Visintin}. Let 
$F_-,F_+:\mathbb{R}\rightarrow\mathbb{R}$ be two increasing functions with $F_-\leq F_+$.
If $F_-$ and $F_+$ are Lipschitz continuous, then one way to represent the solution 
$x=x(y,x_0)\in \mathrm{W}^{1,p}(0,T)$ of the generalized play operator, which corresponds 
to the functions $F_-,F_+$ with input $y\in \mathrm{W}^{1,p}(0,T)$, $1\leq p \leq\infty$, 
and $x_0\in \mathbb{R}$, is the solution of the following variational inequality:
\begin{equation}
\begin{aligned}
&\dot{x}(t)(x(t)-\xi) \leq 0 && \forall\xi\in [F_-(y(t)), F_+(y(t))],\ 
\text{a.e. in } (0,T),\\
&x(t)\in [F_-(y(t)), F_+(y(t))] && \text{in }[0,T],\\
&x(0) = \min \{\max \{F_-(y(0)) , x_0\} ,F_+(y(0))\}.
\end{aligned}\label{eq:scalar_play_variational_inequality}
\end{equation}
We do not list all the properties of a generalized play operator here, but refer to the 
literature for certain features once we need them. The typical behaviour of the solution 
$x(y,x_0)$ is depicted in Figure~\ref{fig:03}. The appearance of rate-independent 
systems as the (singular) limit of regularized problems has been analyzed via 
several approaches. We only refer to several results, which are related to our work, 
but emphasize that many other research directions in
the area of hysteresis operators are currently being pursued.\medskip

The first equation in~\eqref{eq:fs} given by 
\be
\label{eq:justfast}
\varepsilon \dot{x} = f(x,y;\varepsilon)
\ee
can be viewed as one possible 
regularization of~\eqref{eq:scalar_play_variational_inequality}.
Equivalent to~\eqref{eq:scalar_play_variational_inequality}, $x(y,x_0)$ 
can be represented by a differential inclusion, see e.g.~\cite{Visintin} 
or~\cite{BrokateSprekels}. Many rate-independent processes are equally 
described by an energetic formulation via a (local/global) stability condition 
and an energy balance condition \cite[Chapter 2 or Chapter 3]{MielkeRoubicek}. 
This leads to the notion of local/global energetic solutions.
There are several other concepts used to describe rate-independent 
processes. We refer to~\cite{Mielke2} or~\cite{MielkeRossiSavare} for overviews.\medskip

In the context of energetic solutions, and the representation of the 
corresponding solutions by rate-independent differential inclusions, 
one often considers the so-called vanishing-viscosity limit, see~\cite{MielkeRossiSavare}. 
The latter is achieved by regularizing the rate-independent differential 
inclusion, frequently by a term modelling viscosity. The solutions of 
$\varepsilon$-dependent regularized problems are then proved to converge 
to an energetic solution of the initial problem. 
One goal of the approach is to reveal properties of the energetic solution such as the 
behaviour at discontinuity points, see~\cite{MielkeRossiSavare}.\medskip

Consider~\eqref{eq:justfast} with a given function $y$. Special choices of $f$, 
$F_+$ and $F_-$ lead to a convex and/or coercive energy functional for the 
regularized, as well as for the limit problem. In this case, 
uniform-in-$\varepsilon$-estimates of the norm of $x_\varepsilon$ by the norm 
of $y$ or $\dot{y}$ in the appropriate spaces can be 
derived \cite[Chapter 1.7 or Chapter 3.8]{MielkeRoubicek} or~\cite{MielkeRossiSavare}.
Moreover, either (I) an equi-continuity estimate for $\{x_\varepsilon\}_\varepsilon$, 
(II) a uniform-in-$\varepsilon$ bound of (some) norm of $\dot{x}_\varepsilon$, or 
(III) a bound of the total variation of $\{x_\varepsilon\}_\varepsilon$ by $y$ and $\dot{y}$ 
can be derived. For (I), a suitable application of the Arzel\`{a}-Ascoli 
Theorem yields the desired convergence of $x_\varepsilon$ to a singular limit $x(y,x_0)$.
For (II) or (III), a weak compactness argument together with Helly's selection theorem 
can be used to prove convergence. For \emph{special choices} of $f$, $F_+$ and $F_-$, 
it should be possible to use them also for the coupled system \eqref{eq:fs}. Our 
situation may include non-convex and non-coercive energy functionals as well as 
systems \emph{without any energy structure}.\medskip

Furthermore, in our setting, in the formulation as variational inequalities, the generality 
of $f$, $F_+$ and $F_-$ together with the coupling in the fast-slow system~\eqref{eq:fs} 
complicates the limit procedure, since uniform-in-$\varepsilon$-estimates for 
$\dot{x}_\varepsilon$, even in $L^1(0,T)$, can no longer be derived. We bypass this 
problem by \emph{projecting} $x_\varepsilon$ in $y$-$x$-phase space vertically onto
the set $\mathcal{C}_0$ of roots of $f$, i.e., we employ the geometric one-sided limit
available for the critical manifold as discussed above. For the projection $p_\varepsilon$ 
we can even show $\mathrm{W}^{1,\infty}(0,T)$-bounds, which are uniform in $\varepsilon$. 
That is, the question of convergence of $x_\varepsilon$ and $y_\varepsilon$ to $x(y,x_0)$ 
and $y$ is shifted to the problem of showing that $x_\varepsilon - p_\varepsilon$ converges 
to zero in the limit $\varepsilon\rightarrow 0$ in an appropriate space, 
and that the limit $x(y,x_0)=\lim_{\varepsilon\ra 0}p_\varepsilon$ follows the 
hysteresis law \eqref{eq:scalar_play_variational_inequality}, where $y=\lim_{\varepsilon\ra 0}y_\varepsilon$ 
solves the fast equation in \eqref{eq:fs} with $x=x(y,x_0)$. The latter can be proved in our 
setting.\medskip 

Another result about hysteresis as the singular limit in ODEs, which can be compared to 
our problem is derived in~\cite{Krejci2} assuming the 
\emph{Li\'enard case} 
\benn
f(x,y;\varepsilon)= -F(x)+y.
\eenn
The source function $y$ is a priori \emph{fixed}, i.e., independent of $\varepsilon$ 
and independent of fast-slow coupling. Furthermore, the set $\mathcal{C}_0$ of this 
problem has \emph{co-dimension $1$}, different from our setting, where major difficulties 
appear as $\mathcal{C}_0$ has co-dimension $0$. Another main difference in~\cite{Krejci2} 
is that the singular limit $x$ of $x_\varepsilon$ with $y$ independent of $\varepsilon$ is the 
output of a hysteresis operator of \emph{switch type} with input $y$. Since the solutions 
$x_\varepsilon$ are continuous, while the limit $x$ is in general discontinuous, uniform 
convergence in classical spaces cannot be expected 
in~\cite{Krejci2}. However, uniform
convergence results have been obtained for the concept of $r$-convergence in the space of 
regulated functions involving uniform bounds. We completely bypass the problem of showing 
uniform-in-$\varepsilon$ bounds for the oscillations or for the derivative of the 
solutions $x_\varepsilon$ in \eqref{eq:fs} by introducing suitable \emph{projection functions} 
$p_\varepsilon$ below.\medskip

In fact, our approach substantially differs from previous approaches already by its fundamental
principles and the direct relation to the fast-slow GSPT viewpoint. It also provides several
new technical tools, and it does not require specialized spaces, ODE
structure assumptions, or the existence of an energy. In particular, this setting is designed to
link GSPT and fast-slow systems a lot more directly to hysteresis limits than has been 
possible so far.
	
\section{The Main Result I - Functional Approach}
\label{sec:main1}

The main system we are going to study is a planar fast-slow system with the added 
generalization that the slow vector field may also depend upon time
\begin{alignat}{2}
	\varepsilon \dot{x} &= f(x,y),\label{Eq:evol_epsilon_y}\\
	\dot{y} &= g(x,y,t),\label{Eq:evol_epsilon_x}
\end{alignat}
augmented with the initial condition $(x(0),y(0)) = (x_0,y_0)$. Our
argument also allows for the generalization $g(x,y,t)=g(x,y,t;\varepsilon)$ and 
$f(x,y)=f(x,y;\varepsilon)$ as long as $f,g$ are bounded for all $\varepsilon
\in[0,\varepsilon_0]$ for some sufficiently small $\varepsilon_0>0$. We shall 
not make this additional generalization explicit and just omit the 
$\varepsilon$-dependence in $f,g$. We denote the solutions 
of~\eqref{Eq:evol_epsilon_y}-\eqref{Eq:evol_epsilon_x} by 
$(x_\varepsilon,y_\varepsilon)=(x_\varepsilon(t),y_\varepsilon(t))$ to emphasize 
the dependence upon $\varepsilon$. We are interested in the limit as 
$\varepsilon \ra 0$ on the finite time interval $J_T:=(0,T)$. The situation outlined 
already in Sections~\ref{ssec:fastslow}-\ref{ssec:hysteresis} is made precise by the 
following main assumptions on $f,g$:

\begin{itemize}
		\item[\mylabel{a:fg}{(A1)}] $f,g$ are Lipschitz continuous with 
		Lipschitz constants $L_f$ and $L_g$ respectively; we can also allow
		only local Lipschitz continuity in our results but refrain from 
		doing so as it just complicates the notation.
		\item[\mylabel{a:cm}{(A2)}] The function $f$ satisfies
			\begin{align}
					\begin{matrix}
						f(x,y) < 0 &&\text{ if } x > F_+(y),\\
						f(x,y) = 0 &&\text{ if } x \in [F_-(y),F_+(y)],\\
						f(x,y) > 0 && \text{else}.						
					\end{matrix}\label{Eq:Ass_g}
			\end{align}
			for two functions $F_+ > F_-$.
		\item[\mylabel{a:Fpm}{(A3)}]	$F_+,F_-$ are Lipschitz continuous with 
		Lipschitz constants $L_+$ and $L_-$; we define $L_\pm:=\max\{L_+,L_-\}$. 
		$F_+,F_-$ are monotone increasing functions.
		\item[\mylabel{a:gbound}{(A4)}] $g$ satisfies the growth assumption
			\begin{align}
				|g(x,y,t) | \leq M(t) ( 1+ \cG(x) + |y| )\label{Eq:growth_f}
			\end{align}
			with $M>0$ continuous and $\cG$ bounded.
\end{itemize}

Assumption~\ref{a:fg} essentially assures the existence of unique (local) solutions 
of the system \eqref{Eq:evol_epsilon_y}-\eqref{Eq:evol_epsilon_x}. \ref{a:cm} forces 
the solutions $(x_\varepsilon,y_\varepsilon)$ to converge to the critical manifold 
$\cC_0$ as $\varepsilon\rightarrow 0$, in particular, the signs of $f$ are given so
that $\cC_0$ can be viewed as attracting with respect to any fast trajectory movement. 
Assumption~\ref{a:Fpm} is going to yield the required Lipschitz continuity of the 
corresponding limiting generalized play operator and enables us to carry over Lipschitz 
bounds of $y_\varepsilon$, which are independent of $\varepsilon$, to $F_+(y_\varepsilon)$ 
and $F_-(y_\varepsilon)$. Amongst others, \ref{a:cm} and \ref{a:Fpm} together are 
eventually going to allow us to show bounds of $x_\varepsilon$ in 
$\mathrm{C}(\overline{J_T})=\mathrm{C}^0(\overline{J_T})$, which are independent 
of $\varepsilon$. Assumption~\ref{a:gbound} yields that $y_\varepsilon$ is bounded in 
$\mathrm{W}^{1,\infty}(J_T)$ independently of $\varepsilon$. In particular, it provides
a growth bound of the slow variable, which could just become unbounded, i.e., other 
growth bounds are expected to work as well.\medskip

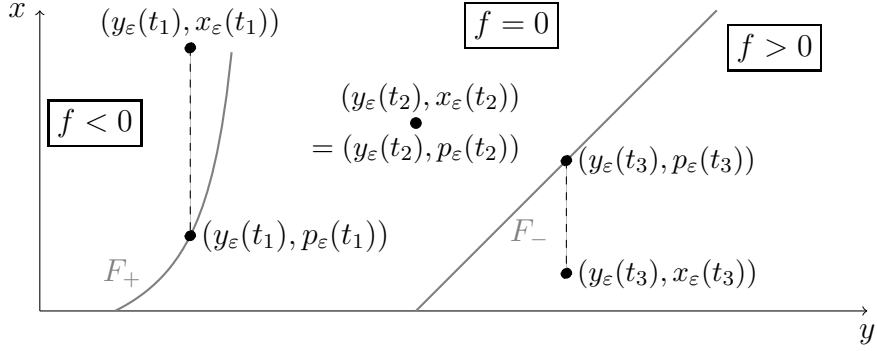
\begin{figure}
\center
\begin{tikzpicture}[]
\draw[->] (0,0) -- (11,0) coordinate (x axis);
\draw (-1pt,4 ) node[anchor=east,fill=white] {$x$};
\draw (11 cm,-1pt) node[anchor=north] {$y$};
\draw[->] (0,0) -- (0,4) coordinate (x axis);
\draw [gray, thick, domain=1:2.55] plot (\x, {(\x-1)/(3-\x)});
\draw [gray] (1.5 cm ,0.5cm) node[anchor=east] {$F_+$};
\draw [gray, thick, domain=5:9] plot (\x, {(\x-5});
\draw [gray] (6cm +3pt,1cm+1pt) node[anchor=west] {$F_-$};
\draw  [black] (5.5,3.8) node[anchor=west] {\fbox{$f=0$}};
\draw  [black] (1.5,2.5) node[anchor=east] {\fbox{$f<0$}};
\draw  [black] (9,3.5) node[anchor=west] {\fbox{$f>0$}};
\filldraw [dashed]
(2,3.5) circle (2pt) node[align=left, above] 
{\small $(y_\varepsilon(t_1),x_\varepsilon(t_1))$}--
(2,1) circle (2pt) node[align=left, right] {$(y_\varepsilon(t_1),p_{\varepsilon}(t_1))$};
\filldraw [dashed]
(7,2) circle (2pt) node[align=left, right] 
{\small $(y_\varepsilon(t_3),p_{\varepsilon}(t_3))$}--
(7,0.5) circle (2pt) node[align=left, right] 
{\small $(y_\varepsilon(t_3),x_{\varepsilon}(t_3))$};
\filldraw (5,2.5) circle (2pt) node[align=left, above] 
{\small $\ \ \ (y_\varepsilon(t_2),x_\varepsilon(t_2))$};
\draw (5,2.5) node[align=left, below] {\small $=(y_\varepsilon(t_2),p_\varepsilon(t_2))$};
\end{tikzpicture}
\caption{Sign behaviour of $f$ and projection to $p_\varepsilon$ in the $(y,x)$-plane.
Note that the critical manifold $\cC_0=\{f=0\}$ is two-dimensional and bounded by the 
two curves $F_\pm$; see also assumption~\ref{a:cm}. The projection along the vertical
fast coordinate to $\cC_0$ is denoted by $p_\varepsilon$ and only acts on the fast 
variable.}
\label{fig:04}
\end{figure}

Since the derivative of $x_\varepsilon$ cannot be bounded independently of $\varepsilon$, 
we introduce a projection function for which such a bound can be derived. As shown in 
Figure~\ref{fig:04}, let $p_{\varepsilon}=p(x_\varepsilon,y_\varepsilon)$ be 
defined by
\begin{alignat*}{2}
p(x_\varepsilon,y_\varepsilon):= \min\{\max\{
x_\varepsilon, F_-(y_\varepsilon)\},F_+(y_\varepsilon)\}.
\end{alignat*}
The family of projections $p_\varepsilon$ is continuous because $F_-$,$F_+$, $x_\varepsilon$, 
$y_\varepsilon$, $\max$ and $\min$ are all continuous. The main theorem we are going to 
prove in several steps below is the following:

\begin{thm}
\label{thm:projection}
Suppose the assumptions~\ref{a:fg}-\ref{a:gbound} hold and let $q\in(1,+\I)$. Denote 
by $(\overline{x},\overline{y})$ the unique solution of
\begin{alignat}{2}
&\dot{y}(t) = g(x(t),y(t),t)\ && \text{a.e. in } J_T,\label{Eq:evol_x_1}\\
& y(0) = y_0,\label{Eq:evol_x_2}\\
&\dot{x}(t)(x(t)-\xi) \leq 0 && \forall\xi\in [F_-(y(t)), F_+(y(t))],\ 
\text{a.e. in } J_T,\label{Eq:evol_y_1}\\
&x(0) = \min \{\max \{F_-(y_0) , x_0\} ,F_+	(y_0)\},\label{Eq:evol_y_2}\\
&x(t)\in [F_-(y(t)), F_+(y(t))] && 
\text{in }\overline{J_T}.\label{Eq:evol_y_3}
\end{alignat}
Then we have $\overline{x}\in \mathrm{W}^{1,\infty}(J_T)$ and $\overline{y}\in 
\mathrm{C}^1(J_T)\cap \mathrm{C}(\overline{J_T})$. Moreover, the main convergence result is
\benn
x_\varepsilon \rightarrow \overline{x} \text{ in } \mathrm{L}^{q}(J_T)	\text{ and }
y_{\varepsilon} \rightarrow \overline{y} \text{ in } \mathrm{W}^{1,q}(J_T)
\eenn
as $\varepsilon\rightarrow 0$, i.e., the singular limit $\overline{x}$ is the solution 
of a generalized play operator for the curves $F_+$ and $F_-$ with input $\overline{y}$. 
\end{thm}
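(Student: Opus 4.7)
The plan is to follow the fast--slow decomposition philosophy: derive uniform bounds on $y_\varepsilon$ and on the projected trajectory $p_\varepsilon$, upgrade these to a uniform BV estimate for $x_\varepsilon$ that permits an application of Helly's selection theorem, and finally identify the limit via the $\varepsilon$-level variational structure.

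\textbf{A priori bounds.} Assumption~\ref{a:gbound} combined with Gr\"onwall's inequality gives $\|y_\varepsilon\|_{\mathrm{W}^{1,\infty}(J_T)} \le C$ uniformly in $\varepsilon$, and then~\ref{a:cm}--\ref{a:Fpm} trap $x_\varepsilon$: on $\{x_\varepsilon > F_+(y_\varepsilon)\}$ one has $\dot x_\varepsilon = f/\varepsilon < 0$, so $(x_\varepsilon - F_+(y_\varepsilon))_+$ can grow only at the drift rate of $F_+(y_\varepsilon)$, bounded by $L_+\|\dot y_\varepsilon\|_\infty$, and symmetrically on the lower side; hence $\|x_\varepsilon\|_{\mathrm{L}^\infty} \le C$. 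A case distinction over the three regions cut out by $\{x = F_\pm(y)\}$ shows that a.e.\ $\dot p_\varepsilon$ equals either $0$ (on the critical strip, where $f \equiv 0$ and thus $\dot x_\varepsilon = 0$) or $F_\pm'(y_\varepsilon)\dot y_\varepsilon$, so $\|p_\varepsilon\|_{\mathrm{W}^{1,\infty}} \le C$ uniformly.

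\textbf{The crucial BV estimate.} The map $t \mapsto |x_\varepsilon(t) - p_\varepsilon(t)|$ is absolutely continuous, and~\ref{a:cm} yields $\mathrm{sgn}(x_\varepsilon - p_\varepsilon)\cdot f(x_\varepsilon, y_\varepsilon) = -|f(x_\varepsilon, y_\varepsilon)|$ a.e., so
\[
\tfrac{\mathrm{d}}{\mathrm{d}t}|x_\varepsilon - p_\varepsilon| \;=\; \mathrm{sgn}(x_\varepsilon - p_\varepsilon)\bigl(\dot x_\varepsilon - \dot p_\varepsilon\bigr) \;\le\; -\frac{|f(x_\varepsilon, y_\varepsilon)|}{\varepsilon} + |\dot p_\varepsilon| \quad \text{a.e.\ in } J_T.
\]
Integrating over $[0,T]$ and using the $\mathrm{W}^{1,\infty}$-bound on $p_\varepsilon$ gives $\int_0^T |f(x_\varepsilon, y_\varepsilon)|/\varepsilon\,\mathrm{d}t \le C$, so that $\|\dot x_\varepsilon\|_{\mathrm{L}^1(J_T)} \le C$ and simultaneously $\|f(x_\varepsilon, y_\varepsilon)\|_{\mathrm{L}^1(J_T)} \le C\varepsilon \to 0$. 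Thus $x_\varepsilon$ is uniformly bounded in $\mathrm{BV}(J_T) \cap \mathrm{L}^\infty(J_T)$.

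\textbf{Compactness and identification of the limit.} Arzel\`a--Ascoli extracts a subsequence with $y_\varepsilon \to \bar y$ and $p_\varepsilon \to \bar x$ uniformly on $\overline{J_T}$, with $\bar y, \bar x \in \mathrm{W}^{1,\infty}(J_T)$. Helly's selection theorem, applied to the uniformly $\mathrm{BV}\cap \mathrm{L}^\infty$-bounded family $\{x_\varepsilon\}$, then gives a further subsequence along which $x_\varepsilon \to \tilde x$ pointwise a.e. Since $f$ is continuous and $\|f(x_\varepsilon, y_\varepsilon)\|_{\mathrm{L}^1} \to 0$, we may further assume $f(x_\varepsilon, y_\varepsilon) \to 0$ a.e., so that $f(\tilde x(t), \bar y(t)) = 0$ and hence $\tilde x(t) \in [F_-(\bar y(t)), F_+(\bar y(t))]$ a.e.; continuity of the truncation then forces $\bar x = \tilde x$ a.e., and dominated convergence upgrades this to $x_\varepsilon \to \bar x$ in $\mathrm{L}^q(J_T)$ for every $q<\infty$. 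Substituting into $\dot y_\varepsilon = g(x_\varepsilon, y_\varepsilon, t)$ and invoking continuity of $g$ then promotes $y_\varepsilon \to \bar y$ to $\mathrm{W}^{1,q}(J_T)$. For the variational inequality~\eqref{Eq:evol_y_1}, the $\varepsilon$-level sign property in~\ref{a:cm} gives $\dot x_\varepsilon(x_\varepsilon - \xi) \le 0$ for admissible $\xi(t) \in [F_-(y_\varepsilon(t)), F_+(y_\varepsilon(t))]$; the identity
\[
\dot x_\varepsilon(x_\varepsilon - \xi) = \tfrac{1}{2}\tfrac{\mathrm{d}}{\mathrm{d}t}(x_\varepsilon - \xi)^2 + \dot\xi(x_\varepsilon - \xi)
\]
combined with testing against a non-negative $\varphi \in \mathrm{C}_c^\infty(J_T)$ and integration by parts eliminates the uncontrollable $\dot x_\varepsilon$, after which the $\mathrm{L}^q$-convergence of $x_\varepsilon$ and the uniform convergence $F_\pm(y_\varepsilon) \to F_\pm(\bar y)$ permit passage to the limit for any Lipschitz $\xi$ with $\xi(t) \in [F_-(\bar y(t)), F_+(\bar y(t))]$, yielding~\eqref{Eq:evol_y_1}. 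The initial condition~\eqref{Eq:evol_y_2} is inherited from $\bar x(0) = \lim_\varepsilon p_\varepsilon(0) = p(x_0, y_0)$, and~\eqref{Eq:evol_y_3} from $\bar x = \lim p_\varepsilon$. Uniqueness of the generalized play output for fixed Lipschitz input, together with Lipschitz dependence of the coupled system on the slow input, promotes the subsequential convergence to convergence of the entire family $(x_\varepsilon, y_\varepsilon)$.

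\textbf{Main obstacle.} The technical heart is the BV estimate. The correct Lyapunov functional is the linear $\phi(s) = s$ rather than the classical quadratic $\phi(s) = s^2$: only the linear choice converts the purely sign-based information in~\ref{a:cm} directly into an $\mathrm{L}^1$-bound on $\dot x_\varepsilon$, which is exactly the compactness needed to invoke Helly. A quadratic energy would produce only $\|f\|_{\mathrm{L}^2} = O(\sqrt{\varepsilon})$, and this would not translate into compactness of $x_\varepsilon$ itself, because~\ref{a:cm} provides no coercive lower bound on $|f|$ away from the critical strip, so the Lipschitz upper bound $|f| \le L_f|x_\varepsilon - p_\varepsilon|$ cannot be reversed.
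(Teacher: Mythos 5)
Your proof is correct and takes a genuinely different route from the paper's, although it shares the key device that makes both proofs go: the linear Lyapunov functional $t\mapsto|x_\varepsilon(t)-p_\varepsilon(t)|$, which turns the pure sign information of~\ref{a:cm} into an integral bound. The paper establishes the same estimate $\varepsilon^{-1}\int_0^T|f(x_\varepsilon,y_\varepsilon)|\,\txtd t\le C$ inside Lemma~\ref{lem:boundedness}, but does not extract the implied uniform BV bound on $x_\varepsilon$; instead, in Lemma~\ref{lem:convergence}, it combines the quadratic functional $(x_\varepsilon-p_\varepsilon)^2$ with compactness of the trajectory range in the $(y,x)$-plane to get $x_\varepsilon-p_\varepsilon\to 0$ a.e., and obtains $\overline{x}$ as the weak-$*$ limit of the uniformly Lipschitz projections $p_\varepsilon$. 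You read off from the same linear estimate that $\|\dot x_\varepsilon\|_{\mathrm{L}^1(J_T)}\le C$ uniformly, apply Helly's selection theorem to $\{x_\varepsilon\}$ directly, and identify the pointwise limit from $\|f(x_\varepsilon,y_\varepsilon)\|_{\mathrm{L}^1}=O(\varepsilon)$ plus continuity of the truncation; this is a clean alternative to the same conclusion. The sharpest divergence is in establishing the variational inequality~\eqref{Eq:evol_y_1}: the paper (Lemma~\ref{lem:Limit_equations}) argues geometrically, splitting $\cC_0$ into interior and upper/lower boundary cases and running a delicate contradiction argument to transfer the sign of $\dot x_{\varepsilon_k}$ to $\dot{\overline{x}}$, whereas you pass to the limit in the weak form after integrating by parts against $\varphi\in\mathrm{C}_c^\infty(J_T)$ and a time-dependent test function $\xi$, the standard device in the variational-inequality literature. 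Two small points to tighten. First, the $\varepsilon$-level inequality $\dot x_\varepsilon(x_\varepsilon-\xi)\le 0$ holds only for $\xi(t)\in[F_-(y_\varepsilon(t)),F_+(y_\varepsilon(t))]$, so to test with a Lipschitz $\xi$ admissible for $\overline{y}$ you should project it onto the $\varepsilon$-dependent interval; the projected family is uniformly Lipschitz and converges uniformly, so the limit passage still goes through, but this step should be written out. Second, your ``Main obstacle'' remark overstates the failure of the quadratic functional: the paper does use the quadratic estimate and it works, not because of a coercive lower bound on $|f|$ but because compactness of the range of $(x_\varepsilon,y_\varepsilon)$ turns $f(x_\varepsilon,y_\varepsilon)(x_\varepsilon-p_\varepsilon)\to 0$ a.e.\ into $x_\varepsilon-p_\varepsilon\to 0$ a.e.\ (a convergent subsequence of the argument landing on the set where the product vanishes must land on the strip). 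Both proofs do, however, genuinely require the linear estimate as a precursor, so your structural diagnosis is essentially right.
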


Theorem~\ref{thm:projection} shows that the singular limit of the non-autonomous planar 
fast-slow system is~\eqref{Eq:evol_epsilon_y} with~\eqref{Eq:evol_epsilon_x} replaced by 
a hysteresis operator. An example of a trajectory of a generalized scalar play operator is shown in 
Figure~\ref{fig:03}. Note that the time-dependence of the slow vector field can indeed
generate highly nontrivial dynamics inside $\cC_0$. Of course, Theorem~\ref{thm:projection}
only provides a partial solution of Netushil's conjecture as we have not characterized 
\emph{all classes} of hysteresis operators, which arise as singular fast-slow limits. 
We need to derive several important auxiliary results before we can prove 
Theorem~\ref{thm:projection}.

\begin{lem}
\label{lem:boundedness}
Fix $\varepsilon>0$ and assume~\ref{a:fg}-\ref{a:gbound} hold. Then we can bound 
$y_{\varepsilon}$ in $\mathrm{C}(\overline{J_T})\cap\mathrm{C}^1(J_T)$ and $p_{\varepsilon}$ 
in $\mathrm{W}^{1,\infty}(J_T)$ independent of $\varepsilon$. More precisely, there exists
a constant $K_1>0$ such that 
\be
\label{eq:fboundlem1}
\|y_\varepsilon\|_{\mathrm{C}(\overline{J_T})}  + \|y_\varepsilon\|_{\mathrm{C}^1(J_T)} 
+ \|p_\varepsilon\|_{\mathrm{W}^{1,\infty}(J_T)} \leq K_1.
\ee
More precisely, there exists a constant $K_2>0$ such that
\be
	\|x_\varepsilon - p_\varepsilon\|_{\mathrm{C}(\overline{J_T})} 
	\leq K_2.\label{Eq:estim_dist_y_eps_p_eps}
\ee
The constants $K_1, K_2>0$ are	independent of $\varepsilon$.	
\end{lem}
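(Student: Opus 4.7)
The plan is to establish the three bounds separately, exploiting only the sign structure of $f$ from \ref{a:cm}, the Lipschitz property of $F_\pm$ from \ref{a:Fpm}, and the growth assumption \ref{a:gbound} on $g$. The key structural observation that makes every estimate $\varepsilon$-independent is that, although $\dot{x}_\varepsilon=f(x_\varepsilon,y_\varepsilon)/\varepsilon$ can a priori blow up as $\varepsilon\ra 0$, by \ref{a:cm} it vanishes identically on $\cC_0$ and always points back towards $\cC_0$ outside of it. Consequently the fast variable cannot drag the slow variable, and once $x_\varepsilon$ has reached $\cC_0$ it can only be displaced from the projection at the rate at which the ``walls'' $F_\pm(y_\varepsilon)$ themselves move.

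First I would bound $y_\varepsilon$: from \ref{a:gbound} and boundedness of $\cG$,
\benn
|y_\varepsilon(t)|\leq |y_0|+\int_0^t M(s)\bigl(1+\|\cG\|_\infty+|y_\varepsilon(s)|\bigr)\,\txtd s,
\eenn
so Gronwall's inequality on $\overline{J_T}$ furnishes a bound on $\|y_\varepsilon\|_{\mathrm{C}(\overline{J_T})}$ depending only on $y_0$, $\|M\|_{\mathrm{C}(\overline{J_T})}$, $\|\cG\|_\infty$ and $T$, and reinserting this into \ref{a:gbound} bounds $\|\dot{y}_\varepsilon\|_{\mathrm{C}(J_T)}$, yielding the $\mathrm{C}^1$-bound. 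For $p_\varepsilon$ itself the $\mathrm{C}$-bound follows from $p_\varepsilon(t)\in[F_-(y_\varepsilon(t)),F_+(y_\varepsilon(t))]$ together with \ref{a:Fpm} and the $y_\varepsilon$-bound just obtained. For $\dot{p}_\varepsilon$ I would split $J_T$ according to the three regions appearing in the $\min$--$\max$ definition: on $\{x_\varepsilon>F_+(y_\varepsilon)\}$ one has $p_\varepsilon=F_+\circ y_\varepsilon$, hence $|\dot{p}_\varepsilon|\leq L_+|\dot{y}_\varepsilon|$ a.e.; symmetrically on $\{x_\varepsilon<F_-(y_\varepsilon)\}$; and on the ``flat'' region $\{F_-(y_\varepsilon)\leq x_\varepsilon\leq F_+(y_\varepsilon)\}$ assumption \ref{a:cm} forces $f(x_\varepsilon,y_\varepsilon)=0$, so $p_\varepsilon=x_\varepsilon$ and $\dot{p}_\varepsilon=\dot{x}_\varepsilon=0$. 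Combining the three cases gives $\|\dot{p}_\varepsilon\|_{\mathrm{L}^\infty(J_T)}\leq L_\pm K_1$ with $K_1$ independent of $\varepsilon$.

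The hard part will be \eqref{Eq:estim_dist_y_eps_p_eps}, since $\dot{x}_\varepsilon$ cannot be controlled uniformly in $\varepsilon$. The plan is to introduce the nonnegative excesses $w_+(t):=(x_\varepsilon(t)-F_+(y_\varepsilon(t)))_+$ and $w_-(t):=(F_-(y_\varepsilon(t))-x_\varepsilon(t))_+$, observing that $|x_\varepsilon-p_\varepsilon|=w_++w_-$. Each $w_\pm$ is absolutely continuous as the positive part of a Lipschitz composition. On the open set $\{w_+>0\}$ assumption \ref{a:cm} gives $f(x_\varepsilon,y_\varepsilon)<0$, so $\dot{x}_\varepsilon\leq 0$ pointwise and therefore
\benn
\dot{w}_+=\dot{x}_\varepsilon-(F_+\circ y_\varepsilon)'\leq |(F_+\circ y_\varepsilon)'|\leq L_+ K_1\qquad\text{a.e.,}
\eenn
while $\dot{w}_+=0$ a.e.\ on $\{w_+=0\}$. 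Setting $t_0:=\sup(\{0\}\cup\{s\leq t:w_+(s)=0\})$ and integrating from $t_0$ to $t$ yields $w_+(t)\leq w_+(0)+L_+ K_1 T$, a bound depending only on $x_0$, $y_0$, $T$ and $K_1$. The completely symmetric argument for $w_-$ then produces the claimed constant $K_2$. The only delicate technical point is the a.e.\ chain rule for $F_\pm\circ y_\varepsilon$, which is justified by noting that this composition is Lipschitz with constant $L_\pm\|\dot{y}_\varepsilon\|_\infty$ and therefore differentiable a.e.\ with derivative bounded in absolute value by $L_\pm K_1$; that is precisely the place where the combination of \ref{a:Fpm} with the already established $\mathrm{C}^1$-bound for $y_\varepsilon$ is indispensable.
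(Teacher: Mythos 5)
Your argument is correct and rests on the same three pillars as the paper's proof: Gronwall for the $y_\varepsilon$-bound, the Lipschitz/monotone structure of $F_\pm$ from~\ref{a:Fpm} for the $p_\varepsilon$-bound, and the sign condition~\ref{a:cm} to neutralize the $1/\varepsilon$ blow-up in the distance estimate. Where you deviate is in the technical realization of the last two steps. For the $\mathrm{W}^{1,\infty}$-bound on $p_\varepsilon$, the paper fixes $t_1<t_2$ and partitions $[t_1,t_2]$ into phases on which $p_\varepsilon$ tracks one of $F_\pm\circ y_\varepsilon$ or stays constant, summing Lipschitz increments over the partition; you instead read off $\dot p_\varepsilon$ almost everywhere via the three-case split of the $\min$--$\max$ projection, which avoids the bookkeeping of the interval partitioning at the cost of invoking the standard a.e.\ calculus for Lipschitz functions (Rademacher, derivatives agreeing a.e.\ on level sets). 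For the distance bound, the paper integrates the signed identity $\frac{\txtd}{\txtd t}|x_\varepsilon-p_\varepsilon|=(\dot x_\varepsilon-\dot p_\varepsilon)\,\mathrm{sgn}(x_\varepsilon-p_\varepsilon)$ and drops the nonpositive $\varepsilon^{-1}f$ term; your decomposition $|x_\varepsilon-p_\varepsilon|=w_++w_-$ with a one-sided comparison ODE for each excess is the same identity written case by case, so the two arguments are equivalent in content. Net effect: your version is a slightly cleaner restatement (in particular it sidesteps the implicit finiteness of the paper's time-interval partition), but it does not introduce a genuinely new mechanism.
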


\begin{proof}
For this proof, let $c>0$ denote a generic constant, which does not depend on 
$\varepsilon$. First, we are going to show that $y_{\varepsilon}$ is bounded 
in $\mathrm{C}(\overline{J_T})\cap\mathrm{C}^1(J_T)$. In a second step, we prove that 
$p_{\varepsilon}$ is bounded in $\mathrm{W}^{1,\infty}(J_T)$. By assumption~\ref{a:gbound}
we obtain an estimate of the form
\be
\label{eq:estslow1}
|y_\varepsilon(t)|\leq |y_0| + \int_0^t M(s) (1 + \cG(x_\varepsilon(s)) 
+ |y_\varepsilon(s)|) ~\txtd s \leq c_0 + c_1 \int_0^t |y_\varepsilon(s)|~\txtd s
\ee
for some constants $c_0,c_1>0$; note that the constants in \eqref{eq:estslow1} may grow
in time as $M$ is time-dependent but the constants remain finite for every fixed final 
time $T>0$. Applying Gronwall's Lemma to~\eqref{eq:estslow1} implies 
$\|y_\varepsilon\|_{\mathrm{C}(\overline{J_T})} \leq c$. Equation~\eqref{Eq:evol_epsilon_x} 
and assumption~\ref{a:gbound} yield $\|y_\varepsilon\|_{\mathrm{C}^1(J_T)} \leq c$. For 
further use we note that this implies that $y_\varepsilon$ is Lipschitz continuous on 
$J_T$ with a Lipschitz constant independent of $\varepsilon$. Regarding $p_\varepsilon$, 
it follows from the definition of the projection that
\be
\label{eq:defnpro}
p_\varepsilon(t) \in [F_-(y_\varepsilon(t)) , F_+(y_\varepsilon(t))].
\ee
Since $F_-$ and $F_+$ are Lipschitz continuous by~\ref{a:Fpm}, we obtain 
from~\eqref{eq:defnpro} that $\|p_\varepsilon\|_{\mathrm{C}(\overline{J_T})} \leq c$. It 
remains to to show that the norm of $p_\varepsilon$ in $\mathrm{W}^{1,\infty}(J_T)$ can 
be bounded independently of $\varepsilon$. Again from the definition of $p_\varepsilon$ 
it follows for a.e.~$t\in J_T$ that
\benn
p_\varepsilon(t) \in (F_-(y_\varepsilon(t)) , F_+(y_\varepsilon(t))) \quad 
\text{if and only if}\quad  \dot{p}_\varepsilon(t)=0=\dot{x}_\varepsilon(t),
\eenn
and otherwise, $p_\varepsilon(t) \in \{F_-(y_\varepsilon(t)),F_+(y_\varepsilon(t))\}$.
Assumption~\ref{a:Fpm} together with the bounds for $y_\varepsilon$, which we have shown 
already, yield that $F_-(y_\varepsilon(\cdot))$ and $F_+(y_\varepsilon(\cdot))$ are Lipschitz 
continuous with Lipschitz constant independent of $\varepsilon$. Let $t_1<t_2$ be given and 
suppose $p_\varepsilon(t_1)=F_+(y_\varepsilon(t_1))$. If $p_\varepsilon(t)=
F_+(y_\varepsilon(t))$ in the time interval $[t_1,t_2]$ then
\benn
|p_\varepsilon(t_1) - p_\varepsilon(t_2)|=  
|F_+(y_\varepsilon(t_1))-F_+(y_\varepsilon(t_2))| \leq c |t_1-t_2|
\eenn
by Lipschitz continuity of $F_+(y_\varepsilon(\cdot))$. Otherwise, there exist times 
$t^{(1)}\in [t_1,t_2)$ and $t^{(2)}\in (t^{(1)},t_2]$ such that 
\begin{alignat*}{2}
x_\varepsilon(t)&\geq F_+(y_\varepsilon(t))\ &&\forall\, t\in [t_1,t^{(1)}] \text{ and }\\
x_\varepsilon(t)&=F_+(y_\varepsilon(t^{(1)}))\quad &&\forall\, t\in [t^{(1)},t^{(2)}].
\end{alignat*}
Note that in the case when $t_1=t^{(1)}$ we imply the empty set if we write 
$[t_1,t^{(1)}]$. In this setting we find
\benn
|p_\varepsilon(t_1) - p_\varepsilon(t^{(2)})|=  
|F_+(y_\varepsilon(t_1))-F_+(y_\varepsilon(t^{(1)}))| \leq 
c |t_1-t^{(1)}| \leq c |t_1-t^{(2)}|.
\eenn
We first choose $t^{(1)}\in [t_1,t_2)$ and then $t^{(2)}\in (t^{(1)},t_2]$ both maximal.
If $t^{(2)}\neq t_2$ and $F_+(y_\varepsilon(t^{(1)}))=F_-(y_\varepsilon(t^{(2)}))$ then 
we apply the same reasoning with $F_-$ on the interval $[t^{(2)},t_2]$. We obtain a 
partition $t^{(0)}=t_1 \leq t^{(1)} \leq \cdots \leq t^{(k)}=t_2$ of $[t_1,t_2]$ such 
that
\benn
|p_\varepsilon(t_1) - p_\varepsilon(t_2)|
\leq \sum_{i=1}^{k}  |p_\varepsilon(t^{(i-1)}) - p_\varepsilon(t^{(i)})|
\leq c\sum_{i=1}^{k}  |t^{(i-1)} - t^{(i)}| = c |t_1-t_2|.
\eenn
This implies that $p_\varepsilon$ is Lipschitz continuous independent of $\varepsilon$. 
By a standard embedding theorem~\cite[Chapter~5.8.2.b,~Theorem 4]{Evans} it follows
that
\benn
\|p_\varepsilon\|_{\mathrm{W}^{1,\infty}(J_T)} \leq c.
\eenn
This concludes the proof of the first bound~\eqref{eq:fboundlem1} in the result. 
It remains to show that $\|x_\varepsilon - p_\varepsilon\|_{\mathrm{C}(\overline{J_T})}$ 
is bounded. We calculate
\begin{align*}
&|x_\varepsilon(t)-p_{\varepsilon}(t)|\\
&=|x_\varepsilon(0)-p_{\varepsilon}(0)| + \int_0^t (\dot{x}_\varepsilon(s) 
- \dot{p}_{\varepsilon}(s)) \frac{x_\varepsilon(s)-p_{\varepsilon}(s)}{|x_\varepsilon(s)
- p_{\varepsilon}(s)|} \, \txtd s\\
&= |x(0)-p(0)| 
+ \frac{1}{\varepsilon}\int_0^t f( x_\varepsilon(s) , y_\varepsilon(s) ) 
\frac{x_\varepsilon(s)-p_{\varepsilon}(s)}{|x_\varepsilon(s)-p_{\varepsilon}(s)|} \,\txtd s 
- \int_0^t \dot{p}_{\varepsilon}(s) 
\frac{x_\varepsilon(s)-p_{\varepsilon}(s)}{|x_\varepsilon(s)-p_{\varepsilon}(s)|} \,\txtd s.
\end{align*}
By the stability assumption~\ref{a:cm} and the definition of $p_\varepsilon$ it holds
\be
\label{Eq:leq_zero_estimate}
f( x_\varepsilon(s) , y_\varepsilon(s) ) 
\frac{x_\varepsilon(s)-p_{\varepsilon}(s)}{|x_\varepsilon(s)-p_{\varepsilon}(s)|}\leq 0
\ee
for all $s\in [0,t]$, cf.~Figure~\ref{fig:04}. Hence, it follows that 
\be
\label{eq:pop_dnymics_stock_estimate}
|x_\varepsilon(t)-p_\varepsilon(t)| - \frac{1}{\varepsilon}\int_0^t 
f( x_\varepsilon(s) , y_\varepsilon(s) ) 
\frac{x_\varepsilon(s)-p_{\varepsilon}(s)}{|x_\varepsilon(s)-p_{\varepsilon}(s)|}\,\txtd s 
\leq |x(0)-p(0)| + \int_0^t |\dot{p}_{\varepsilon}(s)|\, \txtd s
\ee
and the left side is greater or equal than zero. We have already shown that the right 
side in~\eqref{eq:pop_dnymics_stock_estimate} is bounded independently of $\varepsilon$. 
Consequently, the estimate~\eqref{eq:pop_dnymics_stock_estimate} implies
\benn 
\|x_\varepsilon - p_\varepsilon\|_{\mathrm{C}(\overline{J_T})} \leq c, 
\eenn
which finishes the proof.
\end{proof}

Since the bounds in Lemma~\ref{lem:boundedness} are independent of $\varepsilon$, we 
can proceed by a compactness argument and prove convergence results for appropriate 
subsequences.

\begin{lem}
\label{lem:convergence}
Fix any $q\in(1,+\infty)$ and assume~\ref{a:fg}-\ref{a:gbound}, then the following 
results hold:
\begin{itemize}
 \item[\mylabel{c:xp}{(C1)}] $\lim_{\varepsilon\ra 0}\|x_\varepsilon - 
p_\varepsilon\|_{\mathrm{L}^q(J_T)}=0$.
 \item[\mylabel{c:weakstrong}{(C2)}] Given any sequence $\{\varepsilon_j\}_{j=1}^\I$ 
such that $\varepsilon_j\ra 0$ as $j\ra +\I$, there exists a subsequence 
$\{\varepsilon_{j_k}=:\varepsilon_k\}_{k=1}^\I$ and functions 
$\overline{x},\overline{y}\in \mathrm{W}^{1,q}(J_T)$ such that
	\be
	\label{Conv_subseq_p_and_x}
	p_{\varepsilon_k} \ra \overline{x} \quad  \text{ and } \quad 
	y_{\varepsilon_k} \ra \overline{y}
	\ee
	as $\varepsilon_k \rightarrow 0$ weakly in $\mathrm{W}^{1,q}(J_T)$ and strongly 
	in $\mathrm{C}(\overline{J_T})$.
\item[\mylabel{c:yybar}{(C3)}]	In $\mathrm{L}^q(J_T)$ as $\varepsilon_k \rightarrow 0$, 
we have
	\be
	\label{Conv_subseq_y}
	x_{\varepsilon_k} \rightarrow \overline{x}.
	\ee
	\end{itemize}
\end{lem}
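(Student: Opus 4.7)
The plan is to prove the three claims in order, leveraging the uniform bounds of Lemma~\ref{lem:boundedness}, and then obtain~\ref{c:yybar} immediately from~\ref{c:xp} and~\ref{c:weakstrong}.

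For~\ref{c:xp}, I would return to the key estimate~\eqref{eq:pop_dnymics_stock_estimate} in the proof of Lemma~\ref{lem:boundedness}. The crucial observation is that by~\ref{a:cm} together with the definition of $p_\varepsilon$, the quantity $-f(x_\varepsilon,y_\varepsilon)\,\mathrm{sgn}(x_\varepsilon-p_\varepsilon)$ is actually equal to $|f(x_\varepsilon,y_\varepsilon)|$ (a case check: for $x_\varepsilon>F_+(y_\varepsilon)$, $p_\varepsilon=F_+(y_\varepsilon)$, $f<0$; for $x_\varepsilon<F_-(y_\varepsilon)$, $p_\varepsilon=F_-(y_\varepsilon)$, $f>0$; and on $\cC_0$, $f=0$). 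Combining this with the bound on the right-hand side of~\eqref{eq:pop_dnymics_stock_estimate}, which is uniform in $\varepsilon$ by Lemma~\ref{lem:boundedness}, I get
\benn
\int_0^T |f(x_\varepsilon(s),y_\varepsilon(s))|\,\txtd s \leq C\varepsilon
\eenn
for a constant $C>0$ independent of $\varepsilon$.

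The main obstacle is that assumption~\ref{a:cm} only gives strict signs of $f$ off $\cC_0$, with no quantitative non-degeneracy estimate. I would bypass this with a level-set/compactness argument: since $y_\varepsilon$ is uniformly bounded in $\mathrm{C}(\overline{J_T})$ and $x_\varepsilon$ is uniformly bounded as well (being $\mathrm{C}$-close to $p_\varepsilon$), the set of accessible states $(x_\varepsilon(t),y_\varepsilon(t))$ lives in a fixed compact set $K\subset\R^2$ independent of $\varepsilon$. For any $\delta>0$, the subset $K_\delta:=\{(x,y)\in K : \mathrm{dist}(x,[F_-(y),F_+(y)])\geq \delta\}$ is compact and, by continuity of $f$ and~\ref{a:cm}, $|f|\geq m_\delta>0$ on $K_\delta$. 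Hence on the level set $A_\delta(\varepsilon):=\{t\in J_T : |x_\varepsilon(t)-p_\varepsilon(t)|\geq\delta\}$ the integrand is at least $m_\delta$, so $|A_\delta(\varepsilon)|\leq C\varepsilon/m_\delta$. Splitting the $\mathrm{L}^q$-integral of $|x_\varepsilon-p_\varepsilon|$ over $A_\delta(\varepsilon)$ (where $|x_\varepsilon-p_\varepsilon|\leq K_2$) and its complement (where the integrand is $\leq \delta$) yields $\limsup_{\varepsilon\ra 0}\|x_\varepsilon-p_\varepsilon\|_{\mathrm{L}^q(J_T)}^q \leq \delta^q\,T$, and letting $\delta\ra 0$ finishes~\ref{c:xp}.

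For~\ref{c:weakstrong}, the bounds from Lemma~\ref{lem:boundedness}, together with the fact that~\ref{a:gbound} and the uniform $\mathrm{C}(\overline{J_T})$-bound on $(x_\varepsilon,y_\varepsilon)$ yield an $\mathrm{L}^\infty$-bound on $\dot y_\varepsilon$, give uniform $\mathrm{W}^{1,\infty}(J_T)$-bounds on both $p_\varepsilon$ and $y_\varepsilon$. Since $\mathrm{W}^{1,\infty}(J_T)\hookrightarrow \mathrm{W}^{1,q}(J_T)$ continuously for $q\in(1,+\I)$, Banach-Alaoglu (applied in the reflexive space $\mathrm{W}^{1,q}(J_T)$) extracts weakly convergent subsequences $p_{\varepsilon_k}\rightharpoonup \overline x$ and $y_{\varepsilon_k}\rightharpoonup \overline y$ in $\mathrm{W}^{1,q}(J_T)$ through a common diagonal extraction. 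The Rellich–Kondrachov embedding $\mathrm{W}^{1,q}(J_T)\Subset \mathrm{C}(\overline{J_T})$ upgrades these to strong convergence in $\mathrm{C}(\overline{J_T})$. Finally,~\ref{c:yybar} follows immediately from the triangle inequality
\benn
\|x_{\varepsilon_k}-\overline x\|_{\mathrm{L}^q(J_T)} \leq \|x_{\varepsilon_k}-p_{\varepsilon_k}\|_{\mathrm{L}^q(J_T)} + \|p_{\varepsilon_k}-\overline x\|_{\mathrm{L}^q(J_T)},
\eenn
where the first term tends to zero by~\ref{c:xp} and the second by strong $\mathrm{C}(\overline{J_T})$-convergence from~\ref{c:weakstrong}, which dominates $\mathrm{L}^q$-convergence on the finite interval $J_T$.
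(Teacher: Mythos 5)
Your proofs of \ref{c:weakstrong} and \ref{c:yybar} follow essentially the same route as the paper (Banach--Alaoglu in the reflexive space $\mathrm{W}^{1,q}(J_T)$, compact embedding into $\mathrm{C}(\overline{J_T})$, then the triangle inequality together with \ref{c:xp}), so there is nothing to compare there. For \ref{c:xp} you take a genuinely different path, and it is a good one. The paper squares the deviation, obtains $0\leq -\frac{2}{\varepsilon}\int_0^t f(x_\varepsilon,y_\varepsilon)(x_\varepsilon-p_\varepsilon)\,\txtd s\leq c$, and then argues that the nonnegative integrand tends to zero almost everywhere before invoking dominated convergence; strictly speaking, $\mathrm{L}^1$ smallness of a nonnegative family only gives a.e.\ convergence along subsequences, so the paper's wording at that point really hides a subsequence-plus-uniqueness argument. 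Your approach instead returns to the linear estimate \eqref{eq:pop_dnymics_stock_estimate}, observes (correctly, via the sign structure of \ref{a:cm} and the identity $|x_\varepsilon-p_\varepsilon|=\mathrm{dist}(x_\varepsilon,[F_-(y_\varepsilon),F_+(y_\varepsilon)])$) that $-f\,\mathrm{sgn}(x_\varepsilon-p_\varepsilon)=|f|$ on the trajectory, and thereby obtains the quantitative bound $\int_0^T|f(x_\varepsilon,y_\varepsilon)|\,\txtd s\leq C\varepsilon$. The level-set argument that follows --- compactness of the attainable set, the positive lower bound $m_\delta$ for $|f|$ on $K_\delta$, the measure estimate $|A_\delta(\varepsilon)|\leq C\varepsilon/m_\delta$, and the split of the $\mathrm{L}^q$ integral into $A_\delta(\varepsilon)$ and its complement using the uniform bound $K_2$ from \eqref{Eq:estim_dist_y_eps_p_eps} --- is clean and delivers convergence of the whole family directly, with an explicit rate in $\varepsilon$ once $\delta$ is fixed. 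What you trade is a little extra bookkeeping (the compactness of $K_\delta$ and the continuity of $f$) for the avoidance of the a.e.-convergence step, and in my view the trade is favourable.

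Two small points to tidy up when writing this out. First, the sign function $\mathrm{sgn}(x_\varepsilon-p_\varepsilon)$ is undefined where $x_\varepsilon=p_\varepsilon$; on that set $f(x_\varepsilon,y_\varepsilon)=0$ by \ref{a:cm}, so the integrand is zero regardless of the convention used, and you should say so explicitly. Second, make sure to justify the compactness of the accessible set $K$ by quoting the uniform $\mathrm{C}(\overline{J_T})$ bound on $y_\varepsilon$ from Lemma~\ref{lem:boundedness} together with \eqref{Eq:estim_dist_y_eps_p_eps} and the Lipschitz bound on $F_\pm$; you hint at this but it deserves a full sentence since the whole level-set argument rests on it.
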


\begin{proof}
In order to prove~\ref{c:xp}, we compute for arbitrary $\varepsilon>0$ and 
$t\in\overline{J_T}$
\begin{align*}
(x_\varepsilon(t)-p_{\varepsilon}(t))^2 &= (x(0)-p(0))^2 
+ 2\int_0^t (\dot{x}_\varepsilon(s) - \dot{p_{\varepsilon}}(s)) (x_\varepsilon(s)
-p_{\varepsilon}(s)) \, \txtd s\\
&= (x(0)-p(0))^2 + \frac{2}{\varepsilon}\int_0^t f(x_\varepsilon(s),y_\varepsilon(s)) 
(x_\varepsilon(s)-p_{\varepsilon}(s))\, \txtd s\\
&\quad -2\int_0^t \dot{p}_{\varepsilon}(s)~ (x_\varepsilon(s)-p_{\varepsilon}(s))~\txtd s.
\end{align*}
This calculation together with Lemma~\ref{lem:boundedness} 
and~\eqref{Eq:leq_zero_estimate} yields
\be
\label{eq:boundquad}
0\leq (x_\varepsilon(t)-p_{\varepsilon}(t))^2 - 
\frac{2}{\varepsilon}\int_0^t f( x_\varepsilon(s) , y_\varepsilon(s) ) 
(x_\varepsilon(s)-p_{\varepsilon}(s))\, \txtd s\leq c,
\ee
where $c>0$ is, as before, a generic constant independent of $\varepsilon$.	The bounds 
in~\eqref{eq:boundquad} and the sign condition~\eqref{Eq:leq_zero_estimate} imply
\benn
f( x_\varepsilon(s) , y_\varepsilon(s) )(x_\varepsilon(s)-p_{\varepsilon}(s))\ra 0
\eenn
for a.e.~$s\in J_T$ as $\varepsilon\rightarrow 0$. By definition of $p_\varepsilon$ and 
assumption~\ref{a:cm} we conclude that $x_\varepsilon(s)-p_{\varepsilon}(s)$ tends to 
zero for a.e.~$s\in J_T$ as $\varepsilon\rightarrow 0$. This result, together 
with~\eqref{Eq:estim_dist_y_eps_p_eps} and the Lebesgue dominated convergence theorem, 
implies
\benn
\lim_{\varepsilon\ra 0}\|x_\varepsilon - p_\varepsilon \|_{\mathrm{L}^q(J_T)} = 0,
\eenn
for any $q\in(1,+\I)$, which concludes the proof of~\ref{c:xp}. It remains to show  
\ref{c:weakstrong}-\ref{c:yybar} concerning subsequences for a given sequence 
$\{\varepsilon_j\}_{j=1}^\I$ with $\lim_{j\ra \I}\varepsilon_j=0$. By 
Lemma~\ref{lem:boundedness} the functions $y_{\varepsilon}$ and $p_{\varepsilon}$ are 
bounded in $\mathrm{W}^{1,\infty}(J_T)$ independently of $\varepsilon$ and hence are
in $\mathrm{W}^{1,q}(J_T)$ for any $q\in(1,+\I]$. Using the Banach-Alaoglu
Theorem~\cite{RudinFunc}, it follows that there is a subsequence 
$\{\varepsilon_{j_k}=:\varepsilon_k\}_{k=1}^\I$ of $\{\varepsilon_j\}_{j=1}^\I$ and 
that there exist functions $\overline{x},\overline{y}\in \mathrm{W}^{1,q}(J_T)$ such 
that
\be
\label{Conv_subseq_p_and_x1}
p_{\varepsilon_k} \rightharpoonup \overline{x} \quad 
\text{ and } \quad y_{\varepsilon_k} \rightharpoonup \overline{y}
\ee
as $\varepsilon_k \rightarrow 0$ weakly in $\mathrm{W}^{1,q}(J_T)$. Because 
$\mathrm{W}^{1,q}(J_T)$ is compactly embedded in $\mathrm{C}(\overline{J_T})$ for 
$1<q\leq \infty$ \cite[Theorem~6.3]{AdamsFournier}, this convergence is strong in 
$\mathrm{C}(\overline{J_T})$. Since $x_\varepsilon - p_\varepsilon \rightarrow 0$ 
in $\mathrm{L}^q(J_T)$ we finally conclude $x_{\varepsilon_k} \ra \overline{x}$
in $\mathrm{L}^q(J_T)$ as $\varepsilon_k \rightarrow 0$.
\end{proof}

Having proven that subsequences of $x_\varepsilon$, $y_\varepsilon$ and $p_\varepsilon$ 
actually converge in a certain sense, we would like to understand the behaviour of the 
limit functions. We can also improve the type of convergence of the subsequence 
$\{y_{\varepsilon_k}\}_{k=1}^\I$. 

\begin{lem}
\label{lem:Limit_equations}
Consider the same assumptions and the notation of Lemma~\ref{lem:convergence}.
The functions $\overline{x}$ and $\overline{y}$ solve the system 
\begin{alignat}{2}
	&\dot{y}(t) = g(x(t),y(t),t)\ && \text{a.e. in } J_T,\label{Eq:evol_x_1_Lem}\\
	& y(0) = y_0,\label{Eq:evol_x_2_Lem}\\
	&\dot{x}(t)(x(t)-\xi) \leq 0 && \forall\xi\in [F_-(y(t)), F_+(y(t))],\ 
	\text{a.e. in } J_T,\label{Eq:evol_y_1_Lem}\\
	&x(0) = \min \{\max \{F_-(y_0) , x_0\} ,F_+	(y_0)\},\label{Eq:evol_y_2_Lem}\\
	&x(t)\in [F_-(y(t)), F_+(y(t))] && 
	\text{in }\overline{J_T}.\label{Eq:evol_y_3_Lem}
\end{alignat}
Furthermore, $y_{\varepsilon_k}\rightarrow \overline{y}$ in $\mathrm{W}^{1,q}(0,T)$ 
as $k\rightarrow \infty$, i.e., we have strong convergence in $\mathrm{W}^{1,q}(0,T)$
of the slow dynamics for a subsequence.
\end{lem}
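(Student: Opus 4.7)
The plan is to pass termwise to the limit in \eqref{Eq:evol_x_1_Lem}--\eqref{Eq:evol_y_3_Lem} using the convergences furnished by Lemma~\ref{lem:convergence} together with the Lipschitz hypotheses \ref{a:fg} and \ref{a:Fpm}. I would handle the easy items first: the initial conditions \eqref{Eq:evol_x_2_Lem} and \eqref{Eq:evol_y_2_Lem} drop out of the strong $\mathrm{C}(\overline{J_T})$-convergences $y_{\varepsilon_k}\to\overline y$ and $p_{\varepsilon_k}\to\overline x$ combined with $y_{\varepsilon_k}(0)=y_0$ and the explicit value of $p_{\varepsilon_k}(0)$; the constraint \eqref{Eq:evol_y_3_Lem} is obtained by passing to the limit in $p_{\varepsilon_k}(t)\in[F_-(y_{\varepsilon_k}(t)),F_+(y_{\varepsilon_k}(t))]$ via continuity of $F_\pm$.

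Next, I would pass to the limit in the slow ODE. The uniform $\mathrm{L}^\infty$ bound on $x_{\varepsilon_k}$ provided by Lemma~\ref{lem:boundedness}, combined with Lipschitz continuity of $g$ and the convergences $x_{\varepsilon_k}\to\overline x$ in $\mathrm{L}^q(J_T)$ and $y_{\varepsilon_k}\to\overline y$ in $\mathrm{C}(\overline{J_T})$, yields $g(x_{\varepsilon_k},y_{\varepsilon_k},\cdot)\to g(\overline x,\overline y,\cdot)$ strongly in $\mathrm{L}^q(J_T)$ via dominated convergence. Since $\dot y_{\varepsilon_k}=g(x_{\varepsilon_k},y_{\varepsilon_k},\cdot)$ converges weakly to $\dot{\overline y}$ in $\mathrm{L}^q(J_T)$ by Lemma~\ref{lem:convergence}, uniqueness of weak limits gives \eqref{Eq:evol_x_1_Lem}. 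The same observation \emph{upgrades} the weak $\mathrm{W}^{1,q}(J_T)$-convergence of $y_{\varepsilon_k}$ to strong convergence, which is the final assertion of the lemma.

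The main obstacle is the variational inequality \eqref{Eq:evol_y_1_Lem}, where $\dot x_{\varepsilon_k}$ is not uniformly bounded and cannot be passed through the limit directly. My strategy is to trade $\dot x_{\varepsilon_k}$ for a $\dot\xi$-term via integration by parts. For any test function $\xi\in\mathrm{W}^{1,q}(J_T)$ with $\xi(t)\in[F_-(\overline y(t)),F_+(\overline y(t))]$ a.e., I would project it onto the $\varepsilon_k$-dependent admissible interval by
\[\xi_{\varepsilon_k}(t):=\min\{\max\{\xi(t),F_-(y_{\varepsilon_k}(t))\},F_+(y_{\varepsilon_k}(t))\},\]
which is uniformly bounded in $\mathrm{W}^{1,q}(J_T)$, converges uniformly to $\xi$, and lies in $[F_-(y_{\varepsilon_k}),F_+(y_{\varepsilon_k})]$ pointwise. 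Assumption~\ref{a:cm} then yields $\dot x_{\varepsilon_k}(x_{\varepsilon_k}-\xi_{\varepsilon_k})=\varepsilon_k^{-1}f(x_{\varepsilon_k},y_{\varepsilon_k})(x_{\varepsilon_k}-\xi_{\varepsilon_k})\leq 0$ a.e., and integrating on $(s,t)\subset J_T$ after applying the chain-rule identity for $\tfrac12(x_{\varepsilon_k}-\xi_{\varepsilon_k})^2$ converts this into an inequality containing only boundary terms and $\int_s^t\dot\xi_{\varepsilon_k}(x_{\varepsilon_k}-\xi_{\varepsilon_k})\,\mathrm d\tau$, all of which remain uniformly controlled.

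To extract the limit I would pass to a further subsequence along which $x_{\varepsilon_k}\to\overline x$ pointwise a.e.\ (possible by \ref{c:xp}), so the boundary terms converge for $s,t$ in a full-measure set; the integral converges by weak $\mathrm{L}^q$-convergence of $\dot\xi_{\varepsilon_k}$ paired with strong $\mathrm{L}^{q'}$-convergence of $x_{\varepsilon_k}-\xi_{\varepsilon_k}$ (obtained by interpolating the $\mathrm{L}^q$-convergence with the uniform $\mathrm{L}^\infty$-bound). The chain-rule identity applied to the Lipschitz limit pair $(\overline x,\xi)$ then reassembles the surviving inequality into $\int_s^t\dot{\overline x}(\overline x-\xi)\,\mathrm d\tau\leq 0$ for all $s<t$ in a dense set, hence for all $s<t$ by continuity, and Lebesgue differentiation gives $\dot{\overline x}(\tau)(\overline x(\tau)-\xi(\tau))\leq 0$ a.e. Finally, to promote this to an inequality for \emph{every} $\xi\in[F_-(\overline y(\tau)),F_+(\overline y(\tau))]$ rather than the single chosen one, I would apply the argument to the countable family $\xi_r(\tau):=\min\{\max\{r,F_-(\overline y(\tau))\},F_+(\overline y(\tau))\}$ for $r\in\mathbb Q$ and take a countable intersection of exceptional null sets; density of $\mathbb Q$ in each target interval then yields \eqref{Eq:evol_y_1_Lem}.
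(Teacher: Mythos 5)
Your proof is correct, and for most of the lemma it coincides with the paper's: the initial conditions \eqref{Eq:evol_x_2_Lem} and \eqref{Eq:evol_y_2_Lem}, the constraint \eqref{Eq:evol_y_3_Lem}, the slow ODE \eqref{Eq:evol_x_1_Lem} via dominated convergence and uniqueness of weak limits, and the upgrade to strong $\mathrm{W}^{1,q}$-convergence of $y_{\varepsilon_k}$ are handled essentially identically. The genuine difference lies in the variational inequality \eqref{Eq:evol_y_1_Lem}.

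You pass to the limit in the variational inequality by the standard integration-by-parts (chain-rule/energy) device from evolution variational inequality theory: project a fixed competitor $\xi$ onto the $\varepsilon_k$-admissible interval to obtain $\xi_{\varepsilon_k}$, exploit the sign condition
$\dot{x}_{\varepsilon_k}(x_{\varepsilon_k}-\xi_{\varepsilon_k})=\varepsilon_k^{-1}f(x_{\varepsilon_k},y_{\varepsilon_k})(x_{\varepsilon_k}-\xi_{\varepsilon_k})\leq 0$
coming from \ref{a:cm}, rewrite the product as
$\frac{\txtd}{\txtd t}\tfrac12(x_{\varepsilon_k}-\xi_{\varepsilon_k})^2 + \dot{\xi}_{\varepsilon_k}(x_{\varepsilon_k}-\xi_{\varepsilon_k})$,
and pass to the limit in the integrated inequality, at the end disposing of the $\xi$-dependent null sets via a countable dense family. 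The paper argues differently: it fixes a time $t_0$ and distinguishes three cases according to whether $(\overline{x}(t_0),\overline{y}(t_0))$ lies in the interior of $\cC_0$ or on one of the two boundary branches. In the interior, $p_{\varepsilon_k}$ becomes locally constant for small $\varepsilon_k$ and $\dot{\overline{x}}=0$ trivially. On the $F_+$ branch, the sign of $f$ forces $\dot{x}_{\varepsilon_k}\leq 0$ in a neighbourhood, and a contradiction argument (using the uniform Lipschitz bound on $p_{\varepsilon_k}$ from Lemma~\ref{lem:boundedness} together with $x_{\varepsilon_k}-p_{\varepsilon_k}\to 0$ a.e.) transfers the sign $\dot{\overline{x}}\leq 0$ to the limit; combined with the constraint $\overline{x}\in[F_-(\overline{y}),F_+(\overline{y})]$ this yields the variational inequality. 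Your route is more functional-analytic and avoids the case analysis; the paper's route is more geometric and is deliberately chosen to keep the GSPT decomposition of phase space (interior of $\cC_0$ vs.\ boundary branches) visible, which it advertises as the guiding principle of the work and reuses in the proof of Theorem~\ref{Thm:approimation}. The paper's version also produces, as a by-product, the pointwise sign characterization of $\dot{\overline{x}}$ on each branch. Both arguments are valid; yours is arguably more robust to generalization to higher-dimensional play operators, while the paper's is more aligned with the fast-slow phase-space picture. One small remark: you invoke \ref{c:xp} to extract a.e.\ convergence of $x_{\varepsilon_k}$; in fact the paper's proof of \ref{c:xp} already establishes pointwise a.e.\ convergence directly, so no further subsequence is needed.
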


\begin{proof}
We first show that $\overline{y}$ solves~\eqref{Eq:evol_x_1_Lem}-\eqref{Eq:evol_x_2_Lem} 
with $x=\overline{x}$ and improve the convergence of $\{y_{\varepsilon_k}\}_{k=1}^\I$.
Lemma~\ref{lem:convergence}, assumptions \ref{a:fg} and \ref{a:gbound}, and the Lebesgue 
dominated convergence theorem yield that
\be
\label{Eq:strong_conv_deriv_x}
g(x_{\varepsilon_k},y_{\varepsilon_k},t) \ra g(\overline{x},\overline{y},t)
\ee
in $\mathrm{L}^q(J_T)$ as $\varepsilon_k \rightarrow 0$. By Lemma~\ref{lem:convergence}, 
$y_{\varepsilon_k}$ converges to $\overline{y}$ uniformly in $\overline{J_T}$. For 
$t\in \overline{J_T}$ we obtain by using $y_{\varepsilon_k} = y_0 + \int_{0}^{t} 
g(x_{\varepsilon_k},y_{\varepsilon_k},s) \, \txtd s$ that
\be
\left|\overline{y}(t) - y_0 - \int_{0}^{t} g(\overline{x},\overline{y},s) \, \txtd s\right|
\leq | \overline{y}(t) - y_{\varepsilon_k}(t) |
+ \int_{0}^{t} | g(\overline{x},\overline{y},s) 
- g(x_{\varepsilon_k},y_{\varepsilon_k},s)| \, \txtd s
\rightarrow 0 
\ee
as $\varepsilon_k \rightarrow 0$ by using Cauchy-Schwarz and $t<+\I$ for the last term
to get convergence in $L^1$. This shows that $\overline{y}$ 
solves~\eqref{Eq:evol_x_1_Lem}-\eqref{Eq:evol_x_2_Lem} with $x=\overline{x}$. Together 
with~\eqref{Eq:strong_conv_deriv_x} we may conclude 
that $y_{\varepsilon_k}\rightarrow \overline{y}$ in $\mathrm{W}^{1,q}(0,T)$.

Next, we are going to show that $\overline{x}$ 
solves~\eqref{Eq:evol_y_1_Lem}-\eqref{Eq:evol_y_3_Lem} with $y=\overline{y}$. First, we 
will deal with~\eqref{Eq:evol_y_2_Lem}-\eqref{Eq:evol_y_3_Lem}. By definition 
of $p_{\varepsilon}$ and with Lemma~\ref{lem:convergence} we have
\be
\label{eq:convinitial}
\overline{x}(0)=\lim_{k\rightarrow \infty}p_{\varepsilon_k}(0)=
\min\{\max\{x_0, F_-(y_0)\},F_+(y_0)\}
\ee
as well as
\be
\label{eq:convdomain}
\overline{x}(t)\in [F_-(\overline{y}(t)), F_+(\overline{y}(t))]\quad  
\forall\ t\in \overline{J_T},
\ee
which proves~\eqref{Eq:evol_y_2_Lem}-\eqref{Eq:evol_y_3_Lem}. Hence, it remains to show
the variational inequality~\eqref{Eq:evol_y_1_Lem}, which we accomplish in two steps.
First, we deal with initial data in the interior of the critical manifold $\cC_0$
and in a second step we are going to consider dynamics on the boundary. Fix $t_0\in 
\overline{J_T}$ and suppose we start in the interior
\benn
\overline{x}(t_0) \in (F_-(\overline{y}(t_0)),F_+(\overline{y}(t_0))).
\eenn
Continuity of $\overline{x}$, $F_-(\overline{y}(\cdot))$ and $F_+(\overline{y}(\cdot))$ 
implies that there is some interval $J\subset J_T$ with $t_0\in J$ such that 
$\overline{x}(t) \in ( F_-(\overline{y}(t)) , F_+(\overline{y}(t)) )$ for all 
$t\in \overline{J}$. We define the distance to the boundary as
\benn
\delta_J:= \min_{t\in J}\{ \overline{x}(t) - F_-(\overline{y}(t)) , 
F_+(\overline{y}(t)) - \overline{x}(t) \}.
\eenn
By Lemma~\ref{lem:convergence} and assumption~\ref{a:Fpm}, we can find some 
$\varepsilon^{(0)} > 0$ such that for all $\varepsilon_k<\varepsilon^{(0)}$ and 
all $t\in \overline{J_T}$ the following estimate holds
\be
\label{eq:simbelow}
|F_-(y_{\varepsilon_k}(t)) - F_-(\overline{y}(t))| + |F_+(y_{\varepsilon_k}(t)) 
- F_+(\overline{y}(t))| + |p_{\varepsilon_k}(t)-\overline{x}(t)|<\frac{\delta_J}{4}.
\ee
This implies $p_{\varepsilon_k}(t) \in 
( F_-(y_{\varepsilon_k}(t)),F_+(y_{\varepsilon_k}(t)) )$ as well as
\benn
	\min_{t\in J}\{ p_{\varepsilon_k}(t) - F_-(y_{\varepsilon_k}(t)) , 
	F_+(y_{\varepsilon_k}(t)) - p_{\varepsilon_k}(t) \} \geq \frac{\delta_J}{2}.
\eenn
for all $\varepsilon_k<\varepsilon^{(0)}$ and $t\in J$. By definition of the 
projection $p_{\varepsilon_k}$ we immediately find $p_{\varepsilon_k}(t)=
x_{\varepsilon_k}(t)$ and $\dot{p}_{\varepsilon_k}(t)=0$ for a.e.~$t\in J$ 
so that $p_{\varepsilon_k}(t)=p_{\varepsilon_k}(t_0)$ for $t\in J$ and the
variational inequality is just satisfied with zero almost everywhere in $J$.
As the second step, suppose we start on the boundary of the critical manifold
which occurs e.g.~if $\overline{x}(t_0) = F_+(\overline{y}(t_0))$. Then there is 
some interval $J$ with $t_0\in J$ such that $\overline{x}(t) > 
F_-(\overline{y}(t))$ for all $t\in \overline{J}$; note that we slightly
overload the notation here and again use $J$. A similar argument leads
to \eqref{eq:simbelow} and the conclusion that for all for all $\varepsilon_k
<\varepsilon^{(0)}$ and $t\in J$ we now have
\benn
p_{\varepsilon_k}(t) > F_-(y_{\varepsilon_k}(t)) + \frac{\delta_J}{2}.
\eenn
By definition of $p_{\varepsilon_k}$ it follows $x_{\varepsilon_k}(t)\geq 
F_-(y_{\varepsilon_k}(t))$ and $\dot{x}_{\varepsilon_k}(t)\leq 0$  for 
a.e.~$t\in J$. In particular, we have a negative sign for 
$\dot{x}_{\varepsilon_k}(t)$, which we want to transfer to the limit
and show that
\be
\label{eq:coolintgoal}
	\dot{\overline{x}} \leq 0,\qquad \text{a.e. in $J$.}
\ee
To prove~\eqref{eq:coolintgoal}, assume in contradiction that
\be
\label{eq:tobecontra}
\overline{x}(t_2) > \overline{x}(t_1)\qquad 
\text{for some $t_1,t_2\in J$, $t_1<t_2$.}
\ee
Then there exists $\varepsilon^{(1)}>0$ such that 
$p_{\varepsilon_k}(t_2)>p_{\varepsilon_k}(t_1)$ if $\varepsilon_k < \varepsilon^{(1)}$.
Using the Lipschitz continuity of the $p_{\varepsilon_k}$ from 
Lemma~\ref{lem:boundedness}, we can find $t^{(1)}\in (t_1,t_2)$ and $\delta_1>0$ 
such that $p_{\varepsilon_k}(t) < p_{\varepsilon_k}(t_2)-\delta_1$ for all 
$t\in [t_1,t^{(1)}]$ and all $\varepsilon_k < \varepsilon^{(1)}$.
By Lemma~\ref{lem:boundedness}, $x_{\varepsilon_k}-p_{\varepsilon_k}$ converges 
to zero a.e.~in $J_T$. Hence there must be some $t^{(2)}\in [t_1,t^{(1)}]$ and 
some $\varepsilon^{(2)}<\varepsilon^{(1)}$ such that $x_{\varepsilon_k}(t^{(2)}) 
< p_{\varepsilon_k}(t_2)-\delta_2$ for some $\delta_2<\delta_1$ and all 
$\varepsilon_k< \varepsilon^{(2)}$. But then because $\dot{x}_{\varepsilon_k}\leq 0$ 
a.e.~in $J$ it follows
\benn
	x_{\varepsilon_k}(t) < p_{\varepsilon_k}(t_2)-\delta_2
\eenn
for all $\varepsilon_k< \varepsilon^{(2)}$ and all $t\in [t^{(2)},t_2]$.
Again because $x_{\varepsilon_k}-p_{\varepsilon_k}$ converges to zero a.e.~in $J_T$ 
this yields
\benn
	\overline{x}(t) = \lim_{k\rightarrow \infty} (p_{\varepsilon_k}(t) 
	- x_{\varepsilon_k}(t) + x_{\varepsilon_k}(t)) < \lim_{k\rightarrow \infty} 
	(p_{\varepsilon_k}(t) - x_{\varepsilon_k}(t) + p_{\varepsilon_k}(t_2) - 
	\delta_2) = \overline{x}(t_2) - \delta_2
\eenn
for all $\varepsilon_k< \varepsilon^{(2)}$ and a.e.~$t\in [t^{(2)},t_2]$. By continuity 
of $\overline{x}$ this estimate holds for all $t\in [t^{(2)},t_2]$ which gives the 
contradiction 
\benn
	\overline{x}(t_2) < \overline{x}(t_2)-\delta_2.
\eenn
Hence, \eqref{eq:coolintgoal} indeed holds also in the limit. From~\eqref{eq:convdomain}
and the previous results we may now conclude that $\dot{\overline{x}}<0$ in a subset of 
$J$, which has positive measure, is only possible if $\overline{x}=F_+(\overline{y})$ 
a.e.~in this set. Note that this precisely gives one case of the variational 
inequality.

Similarly, we can deal with the case $\overline{x}(t_0) = F_-(\overline{y}(t_0))$ to 
show that $\dot{\overline{x}}\geq 0$ and that $\dot{\overline{x}}>0$ in a set of positive 
measure is only possible if $\overline{x} = F_-(\overline{y})$~a.e. in this set. Hence,
it follows, we have for a.e. $t\in J_T$ 
\benn
\dot{\overline{x}}(t)(\overline{x}(t)-\xi) \leq 0\quad 
\forall\ \xi\in [F_-(\overline{y}(t)), F_+(\overline{y}(t))].
\eenn
This proves that $\overline{x}$ solves 
\eqref{Eq:evol_y_1_Lem}-\eqref{Eq:evol_y_3_Lem} with $y=\overline{y}$.
\end{proof}

We observe that the proof of Lemma~\ref{lem:Limit_equations} essentially relied on the
convergence of the fast projection as $\varepsilon\ra 0$. Furthermore, the argument treats 
the critical manifold $\cC_0$ in three different phases according whether we are on the 
boundaries defined by $F_+,F_-$ or in the interior. Since many other hysteresis operators 
can also be defined by variational inequalities, we actually may hope that our strategy
can be carried over to other singular limits not covered by standard normally hyperbolic
Fenichel theory or other fast-slow systems methods. Finally, we can summarize the previous
results and prove the main result. 

\begin{proof}(of Theorem~\ref{thm:projection})
Most of the statements already follow from combining Lemma~\ref{lem:boundedness}, 
Lemma~\ref{lem:convergence} and Lemma~\ref{lem:Limit_equations}. However, we still have 
to prove that $(\overline{x},\overline{y})$ are uniquely determined 
by~\eqref{Eq:evol_x_1}-\eqref{Eq:evol_y_3}.
	
First, we make a few observations. The limit $\overline{x}$ is a generalized play 
operator for the Lipschitz continuous curves $F_+$ and $F_-$ with input $y=\overline{y}$. 
By~\cite[III.2.,Theorem 2.2]{Visintin}, this generalized play is a Lipschitz continuous 
hysteresis operator from $\mathrm{C}(\overline{J_T})\times \mathbb{R}$ to 
$\mathrm{C}(\overline{J_T})$, where the second input variable is the initial condition $x_0$. 
The map $g$ is Lipschitz continuous by \ref{a:fg}. 

To prove uniqueness, we argue by contradiction. Suppose that there are two pairs of 
solutions $(x_1,y_1)$ and $(x_2,y_2)$ of~\eqref{Eq:evol_x_1}-\eqref{Eq:evol_y_3}.
Since $y_1$ and $y_2$ are continuous, for $t$ close enough to $0$, we have
\begin{alignat*}{2}
|y_1(t)-y_2(t)|
&\leq \int_{0}^{t} |g(x_1(s),y_1(s),s) - g(x_2(s),y_2(s),s)| \,\txtd s\\
	&\leq L_g \int_{0}^{t} |x_1(s)-x_2(s)| + |y_1(s)-y_2(s)| \,\txtd s\\
	&\leq c \int_{0}^{t} \sup_{0\leq \tau \leq s}|y_1(\tau)-y_2(\tau)| + |y_1(s)-y_2(s)|  
	\,\txtd s.
\end{alignat*}
We used Lipschitz continuity of the hysteresis operator for the last estimate. Therefore,
\begin{alignat*}{2}
&\sup_{0\leq \tau \leq t}|y_1(\tau)-y_2(\tau)|
\leq c \int_{0}^{t} \sup_{0\leq \tau \leq s}|y_1(\tau)-y_2(\tau)| \,\txtd s,
	\end{alignat*}
and by Gronwall's Lemma it follows $\sup_{0\leq \tau \leq t}|y_1(\tau)-y_2(\tau)|=0$.
This argument can be repeated for another small time interval so that finally $y_1=y_2$.
Consequently, $x_1=x_2$ as well. This shows uniqueness of $\overline{x}$ and $\overline{y}$.
	
Concerning the regularity of $\overline{x}$ and $\overline{y}$ note that	
$\overline{y}\in \mathrm{C}^1(J_T)$ because $g(\overline{x}(\cdot),\overline{y}(\cdot),\cdot)$ 
is continuous. Since $\overline{y}$ is also continuous in $\overline{J_T}$ it follows 
$\overline{y}\in \mathrm{W}^{1,\infty}(J_T)$. We then obtain $\overline{x}\in 
\mathrm{W}^{1,\infty}(J_T)$ by~\cite[III.2.,Theorem 2.3]{Visintin}.
	
The last step is the convergence result, which is relatively simple using the intermediate
results. Indeed, by Lemma~\ref{lem:convergence} and Lemma~\ref{lem:Limit_equations}, 
every sequence $\{\varepsilon_j\}_{j=1}^\I$ with $\varepsilon_j \rightarrow 0$ has a 
subsequence $\{\varepsilon_{j_k}=:\varepsilon_k\}_{k=1}^\I$ such that  
\benn
y_{\varepsilon_k} \rightarrow \overline{y} \text{ in } \mathrm{W}^{1,q}(J_T)
	\quad \text{ and } \quad x_{\varepsilon_k} \rightarrow \overline{x} \text{ in } 
	\mathrm{L}^{q}(J_T).
\eenn
Because $\overline{x}$ and $\overline{y}$ are the unique solution 
of~\eqref{Eq:evol_x_1}-\eqref{Eq:evol_y_3} we conclude that this convergence holds 
for the whole sequence $\{(x_{\varepsilon_j},y_{\varepsilon_j})\}_{j=1}^\I$.	
\end{proof}

Note that the strategy of our proof presented in this section depends crucially 
on the fast variable convergence, which is dealt with via weak convergence first.
Our second approach replaces this step of the argument using a more geometric
strategy based upon local linearization, which actually relies on additional 
assumptions on the differentiability of the vector field; hence, it complements 
the approach presented in this section.

\section{The Main Result II - Linearization Approach}
\label{sec:main2}

As before, we consider the fast-slow non-autonomous planar 
system~\eqref{Eq:evol_epsilon_y}-\eqref{Eq:evol_epsilon_x} on a
finite time interval $J_T=(0,T)$. However, we strengthen
the assumption~\ref{a:fg} to the following: 

\begin{itemize}
		\item[\mylabel{a:fgnew}{(A1')}] $g\in \mathrm{C}^2(\mathbb{R}^3)$ and 
		$f\in \mathrm{C}^2(\mathbb{R}^2)$. 
\end{itemize}

Essentially we are going to get a stronger convergence result if we assume
more differentiability. The result we are going to prove in this section is 
a variant/extension of Theorem~\ref{thm:projection}.

\begin{thm}
\label{Thm:approimation}
Suppose the assumptions~\ref{a:fgnew}, \ref{a:cm}-\ref{a:gbound} hold.
Let $(\overline{x}, \overline{y})$ be the unique solution of
\begin{alignat}{2}
&\dot{y}(t) = g(x(t),y(t),t)\ && \text{a.e. in } J_T,
\label{Eq:evol_with_time_x_1}\\
& y(0) = y_0,\label{Eq:evol_with_time_x_2}\\
&\dot{x}(t)(x(t)-\xi) \leq 0 && \forall\xi\in [F_-(y(t)), F_+(y(t))],
\ \text{a.e. in } J_T,\label{Eq:evol_with_time_y_1}\\
&x(0) =\min\{\max\{F_-(y_0),x_0\},F_+(y_0)\},\label{Eq:evol_with_time_y_2}\\
&x(t)\in [F_-(y(t)), F_+(y(t))] && 
\text{in }\overline{J_T}.\label{Eq:evol_with_time_y_3}\\
\end{alignat}
Then $\overline{y}\in \mathrm{C}(\overline{J_T})\cap\mathrm{C}^1(J_T)$ 
and $\overline{y}\in \mathrm{W}^{1,\infty}(J_T)$. For arbitrary $\eta_1>0$, 
there exists an $\varepsilon_{\eta_1}>0$ such that for all 
$\varepsilon\in (0,\varepsilon_{\eta_1})$ there exists a time 
$t(\varepsilon)\in J_T$ with
\begin{align}
\|y_{\varepsilon} - \overline{y}\|_{\mathrm{C}(\overline{J_T})} 
+ \|x_{\varepsilon} - \overline{x}\|_{ \mathrm{C}[t(\varepsilon),T] } 
<\eta_1.\label{convergence_yeps_hyst}
\end{align}
If $x_0\in [F_-(y_0),F_+(y_0)]$, then $t(\varepsilon)=0$. Otherwise, 
$t(\varepsilon) \leq \varepsilon C(\eta_1)$ for some $C(\eta_1)>0$.
This implies the following:
\begin{itemize}	
	\item[(N1)] If $x_0\in [F_-(y_0),F_+(y_0)]$, then $y_{\varepsilon} 
	\rightarrow \overline{y}$ in $C(\overline{J_T})\cap C^1(J_T)$ 
	and $x_\varepsilon \rightarrow \overline{x}$ in 
	$\mathrm{C}(\overline{J_T})$.	
	\item[(N2)] Otherwise, for arbitrary $\eta_2>0$, $y_{\varepsilon} \rightarrow 
	\overline{y}$ in $C(\overline{J_T})\cap C^1(\eta_2,T)$ and $x_\varepsilon 
	\rightarrow \overline{x}$ in $\mathrm{C}([\eta_2,T])$.
	\item[(N3)] For any $q\in(1,+\I)$, $y_\varepsilon\ra \overline{y}$ in 
	$W^{1,q}(J_T)$ and $x_\varepsilon\ra \overline{x}$ in $L^q(J_T)$.
\end{itemize}
\end{thm}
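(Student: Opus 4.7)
The plan is in four stages: first, uniqueness and regularity of the limit $(\overline{x},\overline{y})$; second, analysis of the initial fast transient producing the time $t(\varepsilon)$; third, a linearization-and-patching argument on $[t(\varepsilon),T]$ that establishes the uniform estimate \eqref{convergence_yeps_hyst}; and fourth, derivation of (N1)--(N3). Uniqueness and regularity are handled exactly as at the end of the proof of Theorem~\ref{thm:projection}: combine the Lipschitz continuity of the generalized play operator from $\mathrm{C}(\overline{J_T})\times\mathbb{R}\to\mathrm{C}(\overline{J_T})$ (see \cite[III.2, Theorem 2.2]{Visintin}) with the Lipschitz continuity of $g$ inherited from \ref{a:fgnew}, and apply Gronwall. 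The statements $\overline{y}\in\mathrm{C}^1(J_T)\cap\mathrm{W}^{1,\infty}(J_T)$ and $\overline{x}\in\mathrm{W}^{1,\infty}(J_T)$ follow from \eqref{Eq:evol_with_time_x_1} and from \cite[III.2, Theorem 2.3]{Visintin} respectively.

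Second, I would treat the initial layer. If $x_0\in[F_-(y_0),F_+(y_0)]$ then $\overline{x}(0)=x_0=x_\varepsilon(0)$ and we set $t(\varepsilon)=0$. Otherwise, say $x_0>F_+(y_0)$, so that $\overline{x}(0)=F_+(y_0)$. Passing to the fast time $\tau=t/\varepsilon$, $y_\varepsilon$ is nearly frozen near $y_0$ over bounded $\tau$-intervals, and by \ref{a:cm} together with continuity of $f$ from \ref{a:fgnew}, one has $f(x,y)\leq -c(\eta_1)<0$ on the compact set $\{x\geq F_+(y)+\eta_1/4,\ |y-y_0|\leq \eta_1/4\}$. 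Integrating the fast equation shows that $x_\varepsilon$ enters the $\eta_1/4$-neighborhood of $F_+(y_\varepsilon)$ at some real time $t(\varepsilon)\leq\varepsilon C(\eta_1)$; during this transient $y_\varepsilon$ drifts by only $O(\varepsilon)$, which yields $|x_\varepsilon(t(\varepsilon))-\overline{x}(t(\varepsilon))|+|y_\varepsilon(t(\varepsilon))-\overline{y}(t(\varepsilon))|=O(\eta_1)$.

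Third --- and this is where the main difficulty lies --- I would establish \eqref{convergence_yeps_hyst} on $[t(\varepsilon),T]$ by a patching argument exploiting \ref{a:fgnew}. I partition $[t(\varepsilon),T]$ into subintervals $[t_i,t_{i+1}]$ of length $\delta=\delta(\eta_1)$ and, on each, distinguish three regimes for $\overline{x}(t_i)$: (i) strictly interior to $(F_-(\overline{y}(t_i)),F_+(\overline{y}(t_i)))$; (ii) on the upper boundary $F_+(\overline{y}(t_i))$; (iii) on the lower boundary. In case (i) the variational inequality \eqref{Eq:evol_with_time_y_1} forces $\overline{x}$ to be constant locally, and since $f\equiv 0$ on $\cC_0$ the trajectory $x_\varepsilon$ is also locally constant once it has entered, so only a Gronwall estimate for $|y_\varepsilon-\overline{y}|$ remains. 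In cases (ii)/(iii) one introduces the deviation $z_\varepsilon(t):=x_\varepsilon(t)-F_+(y_\varepsilon(t))$ and derives its ODE by Taylor-expanding $f$ about $(F_+(y_\varepsilon(t)),y_\varepsilon(t))$ using \ref{a:fgnew}, combined with the Lipschitz bound on $F_+\circ y_\varepsilon$ from Lemma~\ref{lem:boundedness}; the strict sign of $f$ together with geometric comparison to the moving boundary keeps $|z_\varepsilon|$ below $\eta_1$ on the subinterval. Patching the $O(1/\delta)$ subintervals and choosing $\delta,\varepsilon$ small enough that the accumulated Gronwall error remains under $\eta_1$ yields \eqref{convergence_yeps_hyst}. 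The hard part will be the boundary-tracking estimate in (ii)/(iii): since $f\equiv 0$ on the two-dimensional set $\cC_0$, all $x$-derivatives of $f$ vanish on $\partial\cC_0$ by $\mathrm{C}^2$ matching, so linearization provides no exponential attraction rate; one must substitute a quantitative one-sided monotonicity/Lyapunov argument using only the strict sign of $f$ off $\cC_0$ together with the (at worst algebraic) rate at which $|f|$ grows normal to $\partial\cC_0$.

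Finally, (N1)--(N3) follow from \eqref{convergence_yeps_hyst} and the auxiliary results of Section~\ref{sec:main1}. (N1) is immediate since $x_0\in[F_-(y_0),F_+(y_0)]$ allows $t(\varepsilon)=0$, and $\mathrm{C}^1(J_T)$ convergence of $y_\varepsilon$ comes from $\dot{y}_\varepsilon=g(x_\varepsilon,y_\varepsilon,t)$ and the uniform convergence of $(x_\varepsilon,y_\varepsilon)$ together with continuity of $g$. For (N2), given $\eta_2>0$, one chooses $\varepsilon$ so small that $t(\varepsilon)<\eta_2$ and applies \eqref{convergence_yeps_hyst} on $[\eta_2,T]$. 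For (N3), one combines the uniform $\mathrm{W}^{1,\infty}$ bounds of Lemma~\ref{lem:boundedness} and the convergence $\|x_\varepsilon-p_\varepsilon\|_{\mathrm{L}^q(J_T)}\to 0$ from Lemma~\ref{lem:convergence}(C1) with the uniform convergence on $[t(\varepsilon),T]$; the contribution of the short interval $[0,t(\varepsilon)]$ of length $O(\varepsilon)$ to any $\mathrm{L}^q$-norm is $O(\varepsilon^{1/q})$ and hence vanishes in the limit.
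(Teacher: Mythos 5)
Your outline gets the peripheral pieces right (uniqueness and regularity via the Lipschitz hysteresis operator and Gronwall; the $O(\varepsilon)$ initial layer estimate when $x_0\notin[F_-(y_0),F_+(y_0)]$; deduction of (N1)--(N3) from \eqref{convergence_yeps_hyst}), but the central step --- the uniform-in-time estimate on $[t(\varepsilon),T]$ --- has a genuine gap that you yourself flag but do not resolve. You correctly observe that $f\equiv 0$ on the two-dimensional set $\cC_0$ together with $f\in\mathrm{C}^2$ forces $\mathrm{D}f=0$ on $\partial\cC_0$, so a Taylor expansion of $f$ about $(F_+(y_\varepsilon(t)),y_\varepsilon(t))$ gives no linear (hence no exponential) attraction toward the boundary. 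Your proposed substitute, a ``quantitative one-sided monotonicity/Lyapunov argument using \dots the (at worst algebraic) rate at which $|f|$ grows normal to $\partial\cC_0$,'' does not exist under the paper's hypotheses: assumptions \ref{a:fgnew}, \ref{a:cm}--\ref{a:gbound} impose \emph{no} lower bound whatsoever on how slowly $|f|$ may vanish as one approaches $\partial\cC_0$, so there is no algebraic (or any other uniform) rate to exploit. A separate but related issue is your Gronwall bookkeeping: on a slow-time boundary interval of length $\delta$, the fast equation propagates errors by a factor $\txte^{L\delta/\varepsilon}$, which diverges as $\varepsilon\to 0$ for fixed $\delta$; choosing ``$\delta,\varepsilon$ small'' does not tame this, and you do not explain how the accumulated error stays below $\eta_1$.

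The paper sidesteps both problems by a different decomposition. It never linearizes near $\partial\cC_0$; instead it (a) uses the patched linearization \emph{only} outside a $\delta$-neighbourhood of $\cC_0$, where $|f|\geq f_m(\delta)>0$ guarantees the transit through such a region takes only $O(\varepsilon)$ slow time (so the exponentially large fast-Gronwall factor has only an $O(\varepsilon)$-time window to act); (b) inside the $2\delta$-neighbourhood it uses the \emph{exact} nonlinear solution, so no new approximation error is introduced there --- Lemma~\ref{Lem:estim_time_exact_sol} only bounds the number of such segments by $TC_g/\delta$ via a lower bound $\delta/C_g$ on their durations; and (c) crucially, it chooses the linearization ball radius $\theta_\varepsilon$ to decay superexponentially, $\theta_\varepsilon=o\bigl(\txte^{-TL/(2\varepsilon)}\bigr)$, so that the final error $\theta_\varepsilon^2\txte^{TL/\varepsilon}C(\delta)$ in Lemma~\ref{Lem:estim:time_approx_sol} still vanishes despite the exponentially bad Gronwall constant. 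Finally, it does \emph{not} estimate $|x_\varepsilon-\overline{x}|$ directly on boundary intervals: it instead estimates $|x_\varepsilon-\tilde{x}_\varepsilon|$, $|\tilde{x}_\varepsilon-\tilde{p}_\varepsilon|$ (which is of order $\delta$ by the definition of $\tilde{p}_\varepsilon$), and $|\tilde{p}_\varepsilon-p_\varepsilon|$, and then invokes the weak-compactness convergence $p_{\varepsilon_k}\to\overline{x}$ in $\mathrm{C}(\overline{J_T})$ inherited from Lemma~\ref{lem:convergence} to close the triangle inequality. Your argument abandons the projection $p_\varepsilon$ precisely where it is most needed, and that is the source of the gap.
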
 

In particular, note that the conclusions of convergence to a hysteresis operator
in the singular limit are now obtained in stronger norms in (N2) but the convergence 
result of Theorem~\ref{thm:projection} stated in (N3) obviously still holds. We do not expect
any stronger convergence in (N2) for the fast variable, even if the differentiability 
of $f,g$ is increased. This is reminiscent of the classical results from Fenichel
Theory~\cite{Fenichel4,Jones,KuehnBook} as fast subsystem trajectories 
generically develop non-differentiable points when connecting to a critical 
manifold. To prove~Theorem~\ref{Thm:approimation}, we need some additional
notation. First, note that by Lemma~\ref{lem:boundedness}, trajectories remain 
bounded.

\begin{defn}
Let $M$ be a compact rectangle such that $(x_\varepsilon(t),y_\varepsilon(t))$ 
is contained in $M$ for all $\varepsilon>0$ and all $t\in \overline{J_T}$. 
	\begin{itemize}
	\item	We introduce $M_0:=M\cap \{(x,y):f(x,y)=0 \}$, $M_+:=M\cap\{(x,y):f(x,y)<0\}$ 
	and $M_-:= M\cap \{(x,y):f(x,y)>0 \}$.
	\item We define the constants $C_f,C_g,C_{Dg},C_{Df},C_{D^2f},C_{D^2g}>0$ by 
	the upper bounds of the norms of $f,g,Dg,Df,D^2f$ and $D^2g$ on $M$. Moreover, we set 
	$C_M:=\max\{\|w\|: w\in M \}$, $C_{D^2}:=\max\{C_{D^2f},C_{D^2g}\}$ and 
	$L:=\max\{L_f,L_g\}$; cf.~assumptions~\ref{a:fg} and \ref{a:fgnew}.
	\end{itemize}
\end{defn}

We remark that the notation of $M_+$ and $M_-$ corresponds to $F_+$ and $F_-$ above and
the sign subscripts are chosen so that $[F_-,F_+]$ is an interval.

\begin{defn}
\label{Def:exact_solution_on_J_T}
For $w_0=(x_{w_0},y_{w_0})\in M$ and $\varepsilon>0$ we write 
$x_{\varepsilon,w_0}$ and $y_{\varepsilon,w_0}$ for the solution 
of~\eqref{Eq:evol_epsilon_y}-\eqref{Eq:evol_epsilon_x} 
with initial value $(x(0),y(0))=w_0$ and on the time interval $J_T$.
For $(\tau_0,\tau_1)\subset J_T$ we denote by $x_{\varepsilon,
w_0,(\tau_0,\tau_1)}$ and $y_{\varepsilon,w_0,(\tau_0,\tau_1)}$
the solution of~\eqref{Eq:evol_epsilon_y}-\eqref{Eq:evol_epsilon_x}
with initial value $(x(\tau_0),y(\tau_0))=w_0$ and on the time 
interval $(\tau_0,\tau_1)$.
\end{defn}

\begin{figure}[htbp]
	\centering
\begin{overpic}[width=0.3\linewidth]{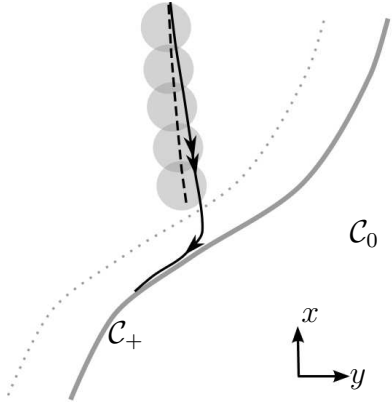}
\put(85,5){$y$}
\put(73,20){$x$}
\put(85,40){$\cC_0$}
\put(25,15){$\cC_+$}
\end{overpic}
\caption{\label{fig:09}Sketch of the approximation argument. The actual
fast-slow system trajectory from Definition~\ref{Def:exact_solution_on_J_T}
is shown as a solid curve (black, with arrows). On the fast scale we use
local approximation in balls (grey disks) to obtain the curve in 
Definition~\ref{Def:linearized_solution} (dashed black). The 
critical manifold boundary $\cC_+$ is shown as well (thick grey curve).
Furthermore, we always work with the approximation up to a given 
neighbourhood, which scales with $\delta$ as in 
Definition~\ref{Lem:estim:time_linearized sol}; the neighbourhood is
indicated as well (dotted grey curve).}
\end{figure}

The additional subscript $w_0$ will be necessary in the proof as we piece
together several local results comparing linear and nonlinear terms. This 
approach is similar to a strategy of patching sample
paths used in the context of stochastic fast-slow 
systems~\cite{BerglundGentz,BerglundGentzKuehn1}; see also Figure~\ref{fig:09}. 
With $w\in M$, $t_0\in \overline{J_T}$ and $\varepsilon>0$ consider the following 
linearized system of evolution equations:
\begin{alignat}{2}
\varepsilon \dot{x}(t)& = f(w) + [\mathrm{D}f(w)]\left(
\left(\begin{matrix} x(t)\\ y(t) \end{matrix}\right) - w\right)
\ && \text{ for }t>t_0,\label{Eq:evol_lin_y_1}\\
x(t_0) &= x_w\label{Eq:evol_lin_y_2},\\
\dot{y}(t) &= g(w,t_0) + [\partial_{(x,y)}g(w,t_0)]\left(
\left(\begin{matrix} x(t)\\ y(t) \end{matrix}\right)- w
\right)\notag \\&\ + [\partial_t g(w,t_0)](t-t_0)
\ && \text{ for }t>t_0,\label{Eq:evol_lin_x_1}\\
y(t_0) &= y_w.\label{Eq:evol_lin_x_2}
\end{alignat}
We group the terms containing $(x(t),y(t))$ and the remaining terms together, write
\begin{alignat*}{2}
F_1(w) + [F_2(w)]
\left(\begin{matrix}x(t)\\y(t)\end{matrix}\right)
&:=f(w) + [\mathrm{D}f(w)]\left(
\left(\begin{matrix}x(t)\\ y(t) \end{matrix}\right)- w\right),\\
G_1(w,t_0,t) + [G_2(w,t_0)]
\left(\begin{matrix} x(t)\\y(t) \end{matrix}\right)&:=g(w,t_0) 
+ [\partial_{(x,y)}g(w,t_0)]\left(
\left(\begin{matrix} x(t)\\ y(t)\end{matrix}\right)- w\right)\\
&\quad + [\partial_t g(w,t_0)](t-t_0),
\end{alignat*}
and denote by $C_{F_1},C_{F_2},C_{G_1},C_{G_2}>0$ upper bounds of the maximum 
norms of the corresponding functions $F_j, G_j$ with $j\in\{1,2\}$ in $M$ and 
$\overline{J_T}$. We also set 
\benn
C_{F,G}:=\max\{C_{F_1},C_{G_1},C_{F_2},C_{G_2}\}.
\eenn
The next step is to define the local approximations of the
solution and patch them together on a given finite-time interval.

\begin{defn}
\label{Def:linearized_solution}
For $v=(x_v,y_v)\in M$, for given $\varepsilon,\theta>0$ and $(\tau_0,\tau_1)
\subset J_T$, we define $x^{\mathrm{lin}}_{\varepsilon,\theta,v,(\tau_0,\tau_1)}$ and 
$y^{\mathrm{lin}}_{\varepsilon,\theta,v,(\tau_0,\tau_1)}$ in the following way:
\begin{enumerate}
	\item[(S1)] Let $v^{(0)}:=v$, $\tau^{(0)}:=\tau_0$ and define 
	$x^{\mathrm{lin}}_{\varepsilon,\theta,v,(\tau_0,\tau_1)}$ and 
	$y^{\mathrm{lin}}_{\varepsilon,\theta,v,(\tau_0,\tau_1)}$ first on the interval 
	$[\tau^{(0)},\tau^{(1)})$ as the solution 
	of~\eqref{Eq:evol_lin_y_1}-\eqref{Eq:evol_lin_x_2} with initial value 
	$w=v^{(0)}$ and initial time $\tau^{(0)}$. Let $\tau^{(1)}$ be the first time in 
	$(\tau_0,\tau_1]$ with 
	\be
	\label{eq:hit1}
	|v^{(0)}_x-x^{\mathrm{lin}}_{\varepsilon,\theta,v,(\tau_0,\tau_1)}(\tau^{1})| 
	= \frac{\theta}{2}\quad \text{ or }\quad \tau^{(1)}=\tau_1.
	\ee
	We set $v^{(1)}:= (x^{\mathrm{lin}}_{\varepsilon,\theta,v,(\tau_0,\tau_1)}(\tau^{1}),
	y^{\mathrm{lin}}_{\varepsilon,\theta,v,(\tau_0,\tau_1)}(\tau^{1})).$
	\item[(S2)] For given $v^{(i)}\in M$ and $\tau^{(i)}\in (\tau_0,\tau_1)$, 
	$i\geq 1$, we define $x^{\mathrm{lin}}_{\varepsilon,\theta,v,(\tau_0,\tau_1)}$ 
	and $y^{\mathrm{lin}}_{\varepsilon,\theta,v,(\tau_0,\tau_1)}$ inductively on the 
	interval $[\tau^{(i)},\tau^{(i+1)})$ by the solution 
	of~\eqref{Eq:evol_lin_y_1}-\eqref{Eq:evol_lin_x_2} with initial value 
	$w=v^{(i)}$ and initial time $\tau^{(i)}$. $\tau^{(i+1)}$ is the first time in 
	$(\tau^{(i)},\tau_1]$ with 
	\be
	\label{eq:hit2}
	|v^{(i)}_x-x^{\mathrm{lin}}_{\varepsilon,\theta,v,(\tau_0,\tau_1)}(\tau^{i+1})| 
	= \frac{\theta}{2}\quad \text{ or }\quad \tau^{(i+1)}=\tau_1.
	\ee
	We set $v^{(i+1)}:= (x^{\mathrm{lin}}_{\varepsilon,\theta,v,(\tau_0,\tau_1)}(\tau^{i+1}),
	y^{\mathrm{lin}}_{\varepsilon,\theta,v,(\tau_0,\tau_1)}(\tau^{i+1})).$ If 
	$\tau^{(i+1)}<\tau_1$ we repeat Step (S2).
	\item[(S3)] After finitely many steps this defines 
	$x^{\mathrm{lin}}_{\varepsilon,\theta,v,(\tau_0,\tau_1)}$ and 
	$y^{\mathrm{lin}}_{\varepsilon,\theta,v,(\tau_0,\tau_1)}$ on the 
	interval $[\tau_0,\tau_1]$.
	\end{enumerate}
\end{defn}

Although Definition~\ref{Def:linearized_solution} may look complicated, it is
actually just a piecewise definition of the linearized solution using hitting
times in~\eqref{eq:hit1}-\eqref{eq:hit2}.

\begin{defn}
Let $\delta>0$ be given. Then by assumption~\ref{a:cm} we can 
define $f_+(\delta)<0$ by 
\benn
f_+(\delta):=\frac{1}{2}\max\{f(x,y): (x,y)\in M_+, 
\mathrm{dist}((x,y),M_0)\geq \delta\}.
\eenn
Similarly we define $f_-(\delta)>0$ by 
\benn
f_-(\delta):=\frac{1}{2}\min\{f(x,y): (x,y)\in M_-, 
\mathrm{dist}((x,y),M_0)\geq \delta\}.
\eenn
Furthermore, it will be helpful to introduce the following notation
\be
\min\{|f_+(\delta)|,f_-(\delta)\}=:f_m(\delta).
\ee
\end{defn} 

The next lemma is the main step to estimate the deviation of the solution
obtained from the linearized process by patching to the true solution.
To simplify the statement and the proof, we use for the
next result the notation $x_{\varepsilon,w_0}:=x_{w_0}$, 
$y_{\varepsilon,w_0}:=y_{w_0}$, $x^{\mathrm{lin}}:=
x^{\mathrm{lin}}_{\varepsilon,\theta,v,(\tau_0,\tau_1)}$ and 
$y^{\mathrm{lin}}:=y^{\mathrm{lin}}_{\varepsilon,\theta,v,(\tau_0,\tau_1)}$.

\begin{lem}
\label{Lem:estim:time_linearized sol}
Let $\delta>0$ be given such that $M\backslash \overline{(M_0+B(0,\delta))}
\neq \emptyset$ and let $v=(x_v,y_v)\in M\backslash \overline{(M_0+B(0,\delta))}$.
Furthermore, consider any $\tau_0\in [0,T)$ and define 
\benn
\varepsilon_\delta := 
\min\left\{\frac{f_m(\delta)}{2(1+L_\pm)C_{F,G}}
\left[\left(\sqrt{2} \left(C_M+ \frac{2C_{F,G}}{f_m(\delta)}
\right)\txte^{2C_{F,G}/f_m(\delta)}+1\right)\right]^{-1},1\right\}.
\eenn
Let $\varepsilon\in (0,\varepsilon_\delta)$ and 
$\theta \in \left(0, \min\{\frac{f_m(\delta)}{C_{Df}}, 1\} \right)$ be 
arbitrary but fixed. Denote by $\tau_1>\tau_0$ the first hitting time  
such that $(x^{\mathrm{lin}},y^{\mathrm{lin}})\in \overline{M_0+B(0,\delta)}$ or 
$\tau_1=T$. We have have the local time estimate 
\be
\label{eq:tauestlem2}
|\tau^{(j+1)}-\tau^{(j)}|\leq 
\frac{\varepsilon\theta}{f_m(\delta)},\quad \text{for all }~ 
j\in \{0,\cdots, K-1\},
\ee
and the global number $K$ of time intervals $(\tau^{(j)},\tau^{(j+1)})$ 
in Definition~\ref{Def:linearized_solution} satisfies
\be
\label{eq:keyKbound}
K\leq  \left\lceil\frac{2~(1+L_\pm)~\mathrm{dist}(v, M_0+B(0,\delta))}{\theta
\left(1-\frac{\varepsilon}{\varepsilon_\delta} \right)}\right\rceil \leq 
\frac{2~(1+L_\pm)~\mathrm{dist}(v, M_0+B(0,\delta))}{\theta\left(1-
\frac{\varepsilon}{\varepsilon_\delta}\right)} + 1.
\ee
In particular, this implies the global time estimate
\be
\label{eq:test01}
|\tau_1-\tau_0| \leq \varepsilon \left(  
\frac{2~(1+L_\pm)~\mathrm{dist}(v, M_0+B(0,\delta))}{f_m(\delta)
\left(1-\frac{\varepsilon}{\varepsilon_\delta}\right)}+ \frac{1}{C_{Df}}\right).
\ee
Moreover, for $w_0\in M$ and arbitrary $t\in [\tau_0,\tau_1]$ there holds
\be
\label{eq:solestLem}
	| x_{w_0}(t) - x^{\mathrm{lin}}(t)|
	+ | y_{w_0}(t) - y^{\mathrm{lin}}(t)|
	\leq 
\left(| x_{w_0}(\tau_0) - x_v| + | y_{w_0}(\tau_0) - y_v|
	+ \theta^2 \cK_1\right)\cK_2,
\ee	
where the constants $\cK_1,\cK_2$ depend upon the given data as follows
\beann
\cK_1&=&\left(\frac{2~(1+L_\pm)~\mathrm{dist}(v, M_0+B(0,\delta))}{f_m(\delta)\left(1-
\frac{\varepsilon}{\varepsilon_\delta}\right)}+ \frac{1}{C_{Df}}\right)
	\left(2~C_{D^2} + \frac{\varepsilon^3}{f_m(\delta)^2}\right),\\
	\cK_2&=&\exp\left[2L \left(  
	\frac{2~(1+L_\pm)~\mathrm{dist}(v, M_0+B(0,\delta))}{f_m(\delta)
	\left(1-\frac{\varepsilon}{\varepsilon_\delta}\right)}+ \frac{1}{C_{Df}}\right)\right]
\eeann
\end{lem}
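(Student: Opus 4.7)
The four claims split into two blocks: \eqref{eq:tauestlem2}--\eqref{eq:test01} are pointwise/geometric consequences of the sign structure of $f$ together with the stopping rules of Definition~\ref{Def:linearized_solution}, while \eqref{eq:solestLem} is a Gronwall-type comparison of the true and patched-linear trajectories. I would prove them in this order, with the comparison estimate carrying the bulk of the work.

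\textbf{Time and counting estimates.} Since $\tau_1$ is the first entry into $\overline{M_0+B(0,\delta)}$, every intermediate base point $v^{(j)}$ lies in $M\setminus\overline{M_0+B(0,\delta)}$, so the definition of $f_\pm(\delta)$ gives $|f(v^{(j)})|\geq 2f_m(\delta)$; moreover the sign of $f$ is constant along the process because the linearized trajectory cannot cross $M_0$ without first entering the forbidden neighbourhood. On $(\tau^{(j)},\tau^{(j+1)})$, equation~\eqref{Eq:evol_lin_y_1} together with the stopping rule $|x^{\mathrm{lin}}-v^{(j)}_x|<\theta/2$, the hypothesis $\theta<f_m(\delta)/C_{Df}$, and an a-priori bound $|y^{\mathrm{lin}}-v^{(j)}_y|=\cO(\varepsilon)$ (which is precisely what the particular form of $\varepsilon_\delta$ delivers via Gronwall applied to the linearized $(x^{\mathrm{lin}},y^{\mathrm{lin}})$ system) dominates the Jacobian correction by $f_m(\delta)$, so $|\varepsilon\dot x^{\mathrm{lin}}|\geq f_m(\delta)$; dividing the displacement $\theta/2$ by this lower rate yields~\eqref{eq:tauestlem2}. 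For~\eqref{eq:keyKbound} I track the vertical distance $d_{\mathrm{ver}}(w):=|x-F_\pm(y)|$ on the appropriate branch: in one step $|\Delta x^{\mathrm{lin}}|=\theta/2$ while $|\Delta y^{\mathrm{lin}}|\leq C_{F,G}\varepsilon\theta/f_m(\delta)$, so by~\ref{a:Fpm},
\[
d_{\mathrm{ver}}(v^{(j)})-d_{\mathrm{ver}}(v^{(j+1)})\;\geq\;\tfrac{\theta}{2}-L_\pm C_{F,G}\tfrac{\varepsilon\theta}{f_m(\delta)}\;\geq\;\tfrac{\theta}{2}\bigl(1-\tfrac{\varepsilon}{\varepsilon_\delta}\bigr).
\]
Summing across the $K$ windows and comparing $d_{\mathrm{ver}}(v)$ with $(1+L_\pm)\,\mathrm{dist}(v,M_0+B(0,\delta))$ via the standard Lipschitz bridge between vertical and Euclidean distance to the graph of $F_\pm$ produces~\eqref{eq:keyKbound}; multiplying by~\eqref{eq:tauestlem2} and using $\theta/f_m(\delta)\leq 1/C_{Df}$ then yields~\eqref{eq:test01}.

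\textbf{Solution comparison.} Set $u(t):=|x_{w_0}(t)-x^{\mathrm{lin}}(t)|+|y_{w_0}(t)-y^{\mathrm{lin}}(t)|$, which is continuous across every breakpoint $\tau^{(j)}$ by construction. On each $(\tau^{(j)},\tau^{(j+1)})$ I would add and subtract $f(x^{\mathrm{lin}},y^{\mathrm{lin}})$ and $g(x^{\mathrm{lin}},y^{\mathrm{lin}},t)$, splitting the discrepancy between the nonlinear and the linearized right-hand sides into a Lipschitz part controlled by $Lu/\varepsilon+Lu$ (from~\ref{a:fg}) and a Taylor remainder around $v^{(j)}$ bounded by $(C_{D^2}/2)|(x^{\mathrm{lin}},y^{\mathrm{lin}})-v^{(j)}|^2$, plus the $\partial_t g\cdot(t-\tau^{(j)})$ contribution. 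Using the time bound of the previous step to control $|(x^{\mathrm{lin}},y^{\mathrm{lin}})-v^{(j)}|\leq \theta/2+\cO(\varepsilon)$, the source is bounded by $\theta^2(2C_{D^2}+\cO(\varepsilon^3/f_m(\delta)^2))/\varepsilon$, matching the structure of $\cK_1$. Applying Gronwall on $[\tau_0,t]$ produces the exponential factor $\exp((L/\varepsilon+L)(t-\tau_0))$; by~\eqref{eq:test01}, however, $(t-\tau_0)\leq \varepsilon\cdot[\cdots]$, so the apparent $1/\varepsilon$ blow-up cancels and the exponent reduces to at most $2L\cdot[\cdots]=\log\cK_2$. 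Integrating the source against this exponential over an interval of length $\cO(\varepsilon)$ gives the $\theta^2\cK_1$ term, and the initial gap $u(\tau_0)=|x_{w_0}(\tau_0)-x_v|+|y_{w_0}(\tau_0)-y_v|$ produces the first term of~\eqref{eq:solestLem}.

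\textbf{Main obstacle.} The delicate point is making the $1/\varepsilon$ in the fast equation and the $\cO(\varepsilon)$ length of the total interval conspire: this requires that the number $K$ of linearization windows grow like $1/\theta$ rather than like $1/\varepsilon$, which is why the estimate $|\Delta y^{\mathrm{lin}}|=\cO(\varepsilon\theta)$ in the counting step is crucial, and also that the a-priori bound on $y^{\mathrm{lin}}-v^{(j)}_y$ used to control the Jacobian correction in Step~1 be self-consistent with the resulting time bound. This apparent circularity is resolved precisely by the specific form of $\varepsilon_\delta$, which is designed to absorb the Gronwall constant of the linearized system together with the factor $(1+L_\pm)C_{F,G}/f_m(\delta)$. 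Tracking all these error contributions across every piecewise switch, and through both the fast and slow Taylor remainders with their different $\varepsilon$-scalings, is the principal bookkeeping burden of the proof.
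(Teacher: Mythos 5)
Your proposal follows essentially the same line of argument as the paper's proof: the per-step time bound is obtained by first showing the linearized $y$-displacement stays below $\theta/2$ (so that the trajectory remains in $B(v^{(j)},\theta)$ and the Jacobian correction is dominated by $f_m(\delta)$), then using the resulting lower bound on $|\dot x^{\mathrm{lin}}|$; the counting bound tracks the distance to $M_0+B(0,\delta)$ shrinking by at least $\tfrac{\theta}{2}\bigl(1-\tfrac{\varepsilon}{\varepsilon_\delta}\bigr)$ per step; and the comparison estimate is Gronwall plus a second-order Taylor remainder with the $1/\varepsilon$ Lipschitz rate cancelled against the $\cO(\varepsilon)$ total duration from \eqref{eq:test01}. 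The only cosmetic divergence is that the paper bounds the linearized $y$-increment by an entrywise series expansion of the matrix exponential rather than Gronwall, and that the paper itself only sketches the final Gronwall/Taylor bookkeeping for \eqref{eq:solestLem}, which you flesh out in more detail.
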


\begin{proof}
For $j\in \{0,\cdots, K-1\}$ let $(\tau^{(j)},\tau^{(j+1)})$ be an interval 
as in Definition~\ref{Def:linearized_solution}. On this interval, we have
\begin{align}
\left(\begin{matrix}x^{\mathrm{lin}}(t)\\ 
y^{\mathrm{lin}}(t) \end{matrix} \right)
&= \exp\left[(t-\tau^{(j)}) \left(\begin{matrix}
\frac{1}{\varepsilon}F_2(v^{(j)})\\ G_2(v^{(j)},\tau^{(j)})\end{matrix}\right)
\right]v^{(j)}\notag\\
&+\int_{\tau^{(j)}}^{t}\exp\left[(t-s) \left(
\begin{matrix} \frac{1}{\varepsilon}F_2(v^{(j)})\\ G_2(v^{(j)},\tau^{(j)})
\end{matrix}\right)\right]\left(\begin{matrix}\frac{1}{\varepsilon}F_1(v^{(j)},s)\\
G_1(v^{(j)})\end{matrix}\right)\,\txtd s.\label{solution_lin_syst}
\end{align}
Consider the matrix
\[
\left(\begin{matrix}
[F_2(v^{(j)})]^{(1)} && [F_2(v^{(j)})]^{(2)}\\
[G_2(v^{(j)},\tau^{(j)})]^{(1)} && [G_2(v^{(j)},\tau^{(j)})]^{(2)}
\end{matrix}
\right):=\left(
\begin{matrix}
F_2(v^{(j)})\\
G_2(v^{(j)},\tau^{(j)})
\end{matrix}\right).
\]
Note that the entries of the matrix are bounded by $C_{F,G}$. By definition of 
the matrix exponential it follows that for all $s,t\in [\tau^{(j)},\tau^{(j+1)}]$ 
\begin{align*}
(0,1)\left[\exp\left[(t-s)\left(\begin{matrix}\frac{1}{\varepsilon}
F_2(v^{(j)})\\ G_2(v^{(j)},\tau^{(j)}) \end{matrix}\right)\right]\right]
= ( m_1(t,s),1+ m_2(t,s)),
\end{align*}
where for $l\in \{1,2\}$ a direct calculation yields
\begin{align*}
|m_l(t,s)| &\leq \sum_{k=1}^\infty \frac{((t-s)C_{F,G})^k}{k!}\left(1+\frac1\varepsilon\right)^{k-1} 
= C_{F,G}(t-s) \sum_{k=0}^\infty \left(\frac{(t-s)C_{F,G}(1+\varepsilon) }{\varepsilon}\right)^k
\frac{1}{(k+1)!} \\
&\leq C_{F,G}(t-s)\exp\left(\frac{2C_{F,G}(t-s)}{\varepsilon}\right) 
\leq C_{F,G}(\tau^{(j+1)}-\tau^{(j)})\exp
\left(\frac{2C_{F,G}(\tau^{(j+1)}-\tau^{(j)})}{\varepsilon}\right).
\end{align*}
Consequently, we also deduce that
\benn
(|m_1(t,s)|^2 + |m_2(t,s)|^2)^{1/2} \leq 
\sqrt{2}~C_{F,G}(\tau^{(j+1)}-\tau^{(j)})
\exp\left(\frac{2C_{F,G}(\tau^{(j+1)}-\tau^{(j)})}{\varepsilon}\right).
\eenn
We multiply with $(0,1)$ from the left in~\eqref{solution_lin_syst} for 
$j\in \{0,\cdots, K-1\}$ and $t\in [\tau^{(j)},\tau^{(j+1)}]$ and take the absolute 
value to obtain
\beann
|y^{\textnormal{lin}}(t)-	y_{v^{(j)}}|
&=& \left|(m_1(t,\tau^{(j)}), m_2(t,\tau^{(i)}))v^{(j)} + \int_{\tau^{(j)}}^{t}
(m_1(t,s), 1+m_2(t,s)) \left(
\begin{matrix}
\frac{1}{\varepsilon}F_1(v^{(j)})\\
G_1(v^{(j)},s)
\end{matrix}\right)
\,\txtd s\right| \\
&\leq& \sqrt{2}~C_{F,G}(\tau^{(j+1)}-\tau^{(j)})
~\txte^{\frac{2C_{F,G}(\tau^{(j+1)}-\tau^{(j)})}{\varepsilon}}
\left(C_M + (\tau^{(j+1)}-\tau^{(j)}) C_{F,G}\left(1+ \frac{1}{\varepsilon}
\right)\right)\\&&+(\tau^{(j+1)}-\tau^{(j)})C_{F,G}.
\eeann
Since $\theta \in (0,1]$, by the definition of $\varepsilon_\delta$, 
using $\varepsilon \in (0,\varepsilon_\delta)$ and if~\eqref{eq:tauestlem2} 
would hold, i.e., $|\tau^{(j+1)}-
\tau^{(j)}|\leq \frac{\varepsilon\theta}{f_m(\delta)}$, then it follows that
\beann
|y^{\textnormal{lin}}(t)-y_{v^{(j)}}| &\leq & 
\frac{\theta\varepsilon C_{F,G}}{f_m(\delta)}\left(\sqrt{2}~ 
\txte^{2C_{F,G}/f_m(\delta)}\left(C_M + \frac{2C_{F,G}}{f_m(\delta)}
\right)+1\right)\\
&\leq& \frac{\theta\varepsilon}{2(1+L_\pm)\varepsilon_\delta}
\leq \frac{\theta}{2}.
\eeann
Next, we are going to prove~\eqref{eq:tauestlem2}. We already know that 
if $|\tau^{(j+1)}-\tau^{(j)}|\leq 
\frac{\varepsilon\theta}{f_m(\delta)}$ then 
$(x^{\mathrm{lin}}(t),y^{\mathrm{lin}}(t))\in B(v^{(j)},\theta)$ 
for all $t\in [\tau^{(j)},\tau^{(j+1)}]$. Recall that $\tau_1$ was defined 
such that $(x^{\mathrm{lin}}(t),y^{\mathrm{lin}}(t))$ lies outside of
a $\delta$-neighbourhood of the critical manifold, i.e., in 
$M\backslash (M_0+B(0,\delta))$. Assume that $v\in M_+$. Then we can obtain 
a local upper bound on the fast vector field of the following form
\be
f(v^{(j)}) + [\mathrm{D}f(v^{(j)})]\left(
\left(\begin{matrix}x^{\mathrm{lin}}(t)\\
y^{\mathrm{lin}}(t)\end{matrix}\right)
- v^{(j)}\right)  < 2f_+(\delta) + C_{Df}\theta < f_+(\delta)
\label{estimate_g_in_Mplus}
\ee
for all $t\in [\tau^{(j)},\tau^{(j+1)}]$. This actually implies a helpful
bound at the end of the small time interval, namely 
\bea
x^{\mathrm{lin}}(\tau^{(j+1)}) &=& v^{(j)}_x 
+ \int_{\tau^{(j)}}^{\tau^{(j+1)}} \frac{1}{\varepsilon}
\left(	F_1(v^{(j)}) + F_2(v^{(j)})\left[
\left(\begin{matrix}(x^{\mathrm{lin}})(s)\\(y^{\mathrm{lin}})(s)
\end{matrix}\right) - v^{(j)}\right]\right)\,\txtd s\nonumber\\
&\leq& v^{(j)}_x + \frac{(\tau^{(j+1)}-\tau^{(j)})}{\varepsilon}f_+(\delta)
\leq v^{(j)}_x - \frac{(\tau^{(j+1)}-\tau^{(j)})}{\varepsilon}f_m(\delta).
\label{order_convergence_ylin}
\eea
The corresponding result for $v\in M_-$ follows analogously.
By definition of the patched linear solution, the result~\eqref{eq:tauestlem2}
follows. We have collected enough estimates to derive the upper 
bound~\eqref{eq:keyKbound} for $K$. Suppose first that $v\in M_+$. Then for 
all $j\in \{0,\cdots, K-1\}$ there holds $x_{v^{(j+1)}} \leq x_{v^{(j)}}$ and 
\benn
\left|y_{v^{(j)}} - y_{v^{(i+1)}}\right| \leq  
\frac{\theta\varepsilon}{2(1+L\pm)\varepsilon_\delta}.
\eenn
By assumption~\ref{a:Fpm}, we have that $F_+$ is monotone increasing with 
Lipschitz constant $L_+$. Therefore
for all $j\in \{0,\cdots, K-1\}$:
\beann
\mathrm{dist}(v^{(j)}, M_0+B(0,\delta))
&\leq &(1+L_\pm)\mathrm{dist}(v, M_0+B(0,\delta)) + \sum_{k=0}^{j-1} 
\left(L_+|y_{v^{(k+1)}} - y_{v^{(k)}}| -\frac{\theta}{2}\right)\\
&\leq& (1+L_\pm)\mathrm{dist}(v, M_0+B(0,\delta)) + j\frac{\theta}{2}
\left(\frac{\varepsilon}{\varepsilon_\delta}-1\right).
\eeann
Hence, $v^{(K)}\in \overline{M_0+B(0,\delta)}$ if $K$ is large enough, 
so that
\benn
(1+L_\pm)\mathrm{dist}(v, M_0+B(0,\delta)) + K\frac{\theta}{2}
\left(\frac{\varepsilon}{\varepsilon_\delta}-1\right)\leq 0,
\eenn
which can be attained for some
\benn
K\leq  \left\lceil\frac{2(1+L_\pm)\mathrm{dist}(v, M_0+B(0,\delta))}{\theta
\left(1-\frac{\varepsilon}{\varepsilon_\delta}\right)}\right\rceil 
\leq \frac{2(1+L_\pm)\mathrm{dist}(v, M_0+B(0,\delta))}{\theta
\left(1-\frac{\varepsilon}{\varepsilon_\delta}\right)} + 1
\eenn
and~\eqref{eq:keyKbound} follows as analogous estimates for $v\in M_-$ 
lead to the same bounds for $K$; note that in this case we have 
$x_{v^{(j+1)}} \geq x_{v^{(j)}}$ for $j\in \{0,\cdots, K-1\}$. 
The upper bound for $K$ implies almost immediately 
the total time estimate~\eqref{eq:test01}. For example, consider the 
direct calculation in the case of $v\in M_+$ 
\beann
|\tau_1-\tau_0| &\leq& K\frac{\varepsilon\theta}{f_m(\delta)} 
\leq \left(\frac{2(1+L_\pm)\mathrm{dist}(v, M_0+B(0,\delta))}{\theta
\left(1-\frac{\varepsilon}{\varepsilon_\delta}\right)} + 1\right) 
\frac{\varepsilon\theta}{f_m(\delta)}\\
&\leq& \varepsilon \left( \frac{2(1+L_\pm)\mathrm{dist}(v, M_0+B(0,
\delta))}{f_m(\delta)\left(1-\frac{\varepsilon}{\varepsilon_\delta}\right)}
+ \frac{1}{C_{Df}}\right).
\eeann
With the bounds on time intervals, one may now inductively show the 
the worst-case upper bound~\eqref{eq:solestLem} by a second-order 
approximation of $f$ and $g$ in combination with Gronwall's Lemma and 
$\varepsilon<\varepsilon_\delta\leq 1$. 	
\end{proof}

We are also going to need a preliminary estimate for the full nonlinear
solution near the boundary of the critical manifold.

\begin{lem}
\label{Lem:estim_time_exact_sol}
Let $\delta,\varepsilon>0$ and $v=(x_v,y_v)\in \overline{M_0+B(0,\delta)}$ be given.
Consider any $\tau_0\subset [0,T)$ and denote by $\tau_1$ the first time 
after $\tau_0$ such that 
$(x_{\varepsilon,v,(\tau_0,\tau_1)},y_{\varepsilon,v,(\tau_0,\tau_1)})=:(x,y)\in 
\partial(M_0+B(0,2\delta))$ or $\tau_1=T$.
Then either 
\be
\label{eq:firstfullbd}
\tau_1=T \qquad \text{ or }\qquad |\tau_1-\tau_0|>\frac{\delta}{C_g}.
\ee
Furthermore, for arbitrary $w_0\in M$, $(x_{w_0},y_{w_0}):=(x_{\varepsilon,w_0},
y_{\varepsilon,w_0})$ and $t\in (\tau_0,\tau_1)$ there 
holds
\benn
| x_{w_0}(t) - x(t)|+ | y_{w_0}(t) - y(t)|\leq\left(|x_{w_0}(\tau_0)-x_v|
+ | y_{w_0}(\tau_0) - y_v|\right)
	 \txte^{ (\tau_1-\tau_0)L \left(1+1/\varepsilon\right)}.
\eenn
\end{lem}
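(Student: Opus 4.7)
The statement splits into two independent assertions: a lower bound on the exit time $\tau_1$, and a Gronwall-type continuous-dependence estimate comparing two solutions of the same system~\eqref{Eq:evol_epsilon_y}--\eqref{Eq:evol_epsilon_x}. My plan is to treat them separately. The first is geometric and exploits the key fact that assumption~\ref{a:cm} forces the fast vector field to always point toward the critical manifold $M_0$, so the distance to $M_0$ can only grow through slow-variable motion. The second is purely analytic and uses only the Lipschitz bounds in~\ref{a:fg}.

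For the exit time estimate, I would introduce $h(t):=\mathrm{dist}((x(t),y(t)),M_0)$, which is $1$-Lipschitz in $(x,y)$. Decomposing the velocity as $(\dot x,\dot y)=(f(x,y)/\varepsilon,0)+(0,g(x,y,t))$, the first summand points from $(x,y)$ toward $M_0$ along the horizontal fiber (by~\ref{a:cm} together with the strip structure $M_0=\{F_-(y)\le x\le F_+(y)\}$). A short argument---for $(x,y)$ above the graph of $F_+$, any nearest point in $M_0$ has $x$-coordinate strictly less than $x$, hence moving in the $-x$ direction decreases Euclidean distance to $M_0$---shows that the contribution of this summand to $\dot h$ is non-positive. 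The second summand contributes at most $C_g$ by the $1$-Lipschitz property of $h$. Hence $\dot h\le C_g$ a.e., and integrating from $\tau_0$ to $\tau_1$ yields $h(\tau_1)-h(\tau_0)\le C_g(\tau_1-\tau_0)$; since $h(\tau_0)\le \delta$ and $h(\tau_1)=2\delta$ in the case $\tau_1<T$, this gives $\tau_1-\tau_0\ge \delta/C_g$. The strict inequality then follows since the trajectory must spend positive time outside $M_0$ in order to increase $h$, and on such a sub-interval the fast contribution to $\dot h$ is strictly negative.

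For the Gronwall estimate, set $E(t):=|x_{w_0}(t)-x(t)|+|y_{w_0}(t)-y(t)|$. Since both pairs solve~\eqref{Eq:evol_epsilon_y}--\eqref{Eq:evol_epsilon_x} with different initial data, \ref{a:fg} yields
\benn
\dot E(t)\le \tfrac{1}{\varepsilon}|f(x_{w_0},y_{w_0})-f(x,y)|+|g(x_{w_0},y_{w_0},t)-g(x,y,t)|\le L\bigl(1+\tfrac{1}{\varepsilon}\bigr) E(t),
\eenn
and Gronwall's lemma, combined with $t-\tau_0\le \tau_1-\tau_0$, gives the stated exponential estimate.

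The main obstacle is in the exit time part: rigorously justifying $\dot h\le C_g$ when $h$ and $F_\pm$ are only Lipschitz, so $h$ is not $\mathrm{C}^1$. The cleanest route is to work with directional derivatives of $h$ at generic points, where the projection onto $M_0$ is unique and lies on one of the boundary graphs $F_\pm$; there one computes the gradient explicitly and checks the sign condition using~\ref{a:cm}. The set of exceptional times (trajectory touches $\partial M_0$ or crosses the cut locus between the two graphs) has measure zero and is absorbed by the a.e.\ chain rule for the Lipschitz composition $h\circ(x(\cdot),y(\cdot))$.
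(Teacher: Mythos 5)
Your Gronwall estimate for the second assertion coincides exactly with what the paper does. For the exit-time bound you take a genuinely different route. The paper argues directly about which points of $\partial(M_0+B(0,2\delta))$ are reachable: since $f<0$ on $M_+$ (resp. $f>0$ on $M_-$) the trajectory's $x$-coordinate can only move toward the strip, and using monotonicity of $F_\pm$ the paper concludes that reaching the $2\delta$-boundary forces a $y$-displacement $|y(\tau_1)-y_v|>\delta$, after which $|\dot y|\le C_g$ finishes the estimate. You instead introduce the quantity $h(t)=\mathrm{dist}\bigl((x(t),y(t)),M_0\bigr)$ and establish the one-sided differential inequality $\dot h\le C_g$ a.e.\ by exploiting the same sign structure from~\ref{a:cm}. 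Both proofs hinge on the same two facts --- the fast flow cannot increase the distance to $M_0$, and the slow velocity is bounded by $C_g$ --- but you encode them as a single Lyapunov-type inequality, which is conceptually more robust (no need to classify the geometry of the exit, and it immediately covers $v\in M_0$ as well), while the paper's version is shorter precisely because it never touches the non-smooth distance function. You correctly flag that non-smoothness as the real obstacle, and the a.e.\ chain rule for the Lipschitz composition $h\circ(x(\cdot),y(\cdot))$ is indeed the standard way to close it. One small improvement: to show that the nearest point of $M_0$ to $(x,y)\in M_+$ has $x$-coordinate $\le x$ (so that $-x$-motion decreases $h$), you do not need to argue via normals to the Lipschitz graph; it suffices to note that if $(F_+(y^*),y^*)\in M_0$ with $F_+(y^*)\ge x$, then by the intermediate value theorem there is $y^{**}$ between $y$ and $y^*$ with $F_+(y^{**})=x$, and $(x,y^{**})\in M_0$ is strictly closer to $(x,y)$ --- a contradiction. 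This avoids any smoothness considerations and also delivers the strict decrease you need for the strict inequality $|\tau_1-\tau_0|>\delta/C_g$.
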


\begin{proof}
We consider the case 
$v\in M_+$. Then because $f(x,y)<0$ for all $(x,y)\in M_+$ and because 
$F_+$ is monotone increasing, the closest point in $\partial(M_0+B(0,2\delta))$, 
which can be reached from $v$ is given by 
\benn
(x_v,y_\alpha):=\{(x_v,y_v-\alpha): \alpha >0\}\cap 
\partial(M_0+B(0,2\delta)),
\eenn
and $y_v - y > y_v - y_\alpha$ for all points $(x,y) \in (M_0 + B(0,2\delta))$ 
with $x \leq x_v$. 
Since $v\in (M_0+B(0,\delta))$ we have $y_v-y_\alpha> \delta$. Hence, 
either $\tau_1=T$ or $|y(\tau_1)-y_v| >\delta$. This yields
\benn
\delta< |y(\tau_1)-y_v |= 
\left|\int_{\tau_0}^{\tau_1} \dot{y}(s)\, \txtd s\right| 
= \left|\int_{\tau_0}^{\tau_1} g(x(s),y(s),s)\, \txtd s \right|\leq C_g 
(\tau_1-\tau_0).
\eenn 
Therefore, we find $|\tau_1-\tau_0|>\frac{\delta}{C_g}$. Analogous estimates 
for $v\in M_-$ prove~\eqref{eq:firstfullbd}. The last statement in the result
follows by a direct Gronwall Lemma argument.
\end{proof}

It is helpful to describe the solution near the critical manifold by the 
full nonlinear dynamics as it reduces to the slow dynamics in the singular
limit, while still using the patched linearized solution for the fast 
dynamics. This motivates the following definition:

\begin{defn}
\label{Def:approximate_solution}
Let $w_0=(x_0,y_0)\in M$ and $\delta>0$ be given. Adopt the assumptions and 
the notation from Lemma~\ref{Lem:estim:time_linearized sol}. Let 
$\varepsilon\in \left(0,\frac{\varepsilon_\delta}{2}\right)$ be arbitrary.
Furthermore, choose $\theta_\varepsilon \in 
\left(0, \min\{\frac{f_m(\delta)}{C_{Df}}, 1\} \right)$ such that 
$\theta_\varepsilon$ is of order $o\left(\txte^{-\frac{TL}{2\varepsilon}}\right)$.
We define $\tilde{x}_{\varepsilon}$ and $\tilde{y}_{\varepsilon}$ inductively as
follows:
\begin{enumerate}
		\item[(T1)] Let $t_0:=0$ be the initial time. If $\mathrm{dist}(w_0,M_0)\leq \delta$, define 
		$t_1>t_0$ to be the first time such that $(x_{\varepsilon,w_0,(t_0,t_1)},
		y_{\varepsilon,w_0,(t_0,t_1)})\in \partial(M_0+B(0,2\delta))$ or $t_1=T$. In 
		this case we set
		\benn
		(\tilde{x}_{\varepsilon},\tilde{y}_{\varepsilon}):= 
		(x_{\varepsilon,w_0,(t_0,t_1)},y_{\varepsilon,w_0,(t_0,t_1)}) \quad \text{
		on $[t_0,t_1]$}.
		\eenn
		If $\mathrm{dist}(w_0,M_0)> \delta$ we define $t_1>t_0$ by the first time 
		such that the linearized solution satisfies
		$(x^{\mathrm{lin}}_{\varepsilon,\theta,w_0,(t_0,t_1)},
		y^{\mathrm{lin}}_{\varepsilon,\theta,w_0,(t_0,t_1)})\in 
		\overline{(M_0+B(0,\delta))}$ or $t_1=T$. We then set 
		\benn
		(\tilde{x}_{\varepsilon},\tilde{y}_{\varepsilon}):= 
		(x^{\mathrm{lin}}_{\varepsilon,\theta,w_0,(t_0,t_1)},
		y^{\mathrm{lin}}_{\varepsilon,\theta,w_0,(t_0,t_1)}) \quad \text{
		on $[t_0,t_1]$}.
		\eenn
		In both cases we define $w_1:=(\tilde{x}_{\varepsilon}(t_1),
		\tilde{y}_{\varepsilon}(t_1))$.
		\item[(T2)] Let $(\tilde{x}_{\varepsilon},\tilde{y}_{\varepsilon})$
		be defined on the interval $\overline{J_{t_i}}$ and let $w_0,\ldots, w_i$ be 
		chosen. If $t_i=T$ we are done. Otherwise, $t_i<T$. 
		If $w_i\in \partial(M_0+B(0,\delta))$, define $t_{i+1}>t_i$ by the first 
		time such that $(x_{\varepsilon,w_i,(t_i,t_{i+1})},
		y_{\varepsilon,w_i,(t_i,t_{i+1})})\in \partial(M_0+B(0,2\delta))$ or $t_{i+1}=T$.
		In this case we set
		\benn
		(\tilde{x}_{\varepsilon},\tilde{y}_{\varepsilon}):= 
		(x_{\varepsilon,w_i,(t_i,t_{i+1})},y_{\varepsilon,w_i,(t_i,t_{i+1})})\quad \text{ 
		on $[t_i,t_{i+1}]$}.
		\eenn
		If $w_i\in \partial(M_0+B(0,2\delta))$, define $t_{i+1}>t_i$ by the first time 
		such that the linearized solution satisfies 
		$(x^{\mathrm{lin}}_{\varepsilon,\theta,w_i,(t_i,t_{i+1})},
		y^{\mathrm{lin}}_{\varepsilon,\theta,w_i,(t_i,t_{i+1})})\in 
		\partial(M_0+B(0,\delta))$ or $t_{i+1}=T$. In this case we set
		\benn
		(\tilde{x}_{\varepsilon},\tilde{y}_{\varepsilon}):= 
		(x^{\mathrm{lin}}_{\varepsilon,\theta,w_i,(t_i,t_{i+1})},
		y^{\mathrm{lin}}_{\varepsilon,\theta,w_i,(t_i,t_{i+1})})\quad\text{ 
		on $[t_i,t_{i+1}]$}.
		\eenn
		In both cases we define 
		$w_{i+1}:=(\tilde{x}_{\varepsilon}(t_{i+1}),\tilde{y}_{\varepsilon}(t_{i+1}))$.
	\end{enumerate}
\end{defn}

As before, we need a projection operator for the solution. Define  
$\tilde{p}_{\varepsilon}=p(\tilde{x}_\varepsilon,\tilde{y}_\varepsilon)$ by
\be
\label{eq:deftildeproject}
\tilde{p}_{\varepsilon}:= p(\tilde{x}_\varepsilon,\tilde{y}_\varepsilon) 
= \min\{\max\{\tilde{x}_\varepsilon, F_-(\tilde{y}_\varepsilon)\},
F_+(\tilde{y}_\varepsilon)\}.
\ee
		
\begin{lem}
\label{Lem:estim:time_approx_sol}
Consider the same assumptions and the notation 
from Definition~\ref{Def:approximate_solution}. Since $w_0\in M$ is fixed
we denote $x_{\varepsilon,w_0}$ by $x_\varepsilon$ and $y_{\varepsilon,w_0}$ 
by $y_\varepsilon$. Then there exists a constant $C(\delta)>0$ such that
\be
\label{eq:lemtilde1}
| x_\varepsilon(t) - \tilde{x}_\varepsilon(t)|
+ | y_\varepsilon(t) - \tilde{y}_\varepsilon(t)|\leq 
\theta_\varepsilon^2\txte^{\frac{TL}{\varepsilon}}C(\delta),\quad
\text{$\forall t\in \overline{J_T}$.}
\ee
Since $\theta_\varepsilon$ is of order 
$o\left(\txte^{-\frac{TL}{2\varepsilon}}\right)$, this implies that
\benn
\|(x_\varepsilon,y_\varepsilon) - 
(\tilde{x}_\varepsilon,\tilde{y}_\varepsilon)\|_{C(\overline{J_T};\mathbb{R}^2)} 
\leq \theta_\varepsilon^2\txte^{\frac{TL}{\varepsilon}}C(\delta)\rightarrow 0
\eenn
as $\varepsilon\rightarrow 0$. Furthermore, if 
$w_0=(x_{w_0},y_{w_0})\in M_0+B(0,\delta)$ then 
\be
\label{eq:lemtilde2}
\|\tilde{p}_\varepsilon - \tilde{x}_\varepsilon\|_{C(\overline{J_T})} 
\leq 2\delta(1+L_\pm).
\ee
If $w_0=(x_{w_0},y_{w_0})\in M\backslash \overline{(M_0+B(0,\delta))}$ 
then there exists a constant $C_0(\delta)>0$ and a time $t_1\in J_T$ with
$t_1 \leq \varepsilon C_0(\delta)$ such that
\be
\label{eq:lemtilde3}
\|\tilde{p}_\varepsilon - \tilde{x}_\varepsilon\|_{C[t_1,T]} 
\leq 2(1+L_\pm)\delta.
\ee
\end{lem}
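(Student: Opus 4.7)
The plan is to exploit the piecewise construction of Definition~\ref{Def:approximate_solution} and propagate the error between the true solution $(x_\varepsilon, y_\varepsilon)$ and the patched approximation $(\tilde x_\varepsilon, \tilde y_\varepsilon)$ inductively across the switching times $t_0 < t_1 < \cdots < t_N = T$. Writing $E_i := |x_\varepsilon(t_i) - \tilde x_\varepsilon(t_i)| + |y_\varepsilon(t_i) - \tilde y_\varepsilon(t_i)|$, the initial error $E_0$ vanishes by construction. On an exact segment $[t_i, t_{i+1}]$, Lemma~\ref{Lem:estim_time_exact_sol} (applied with $w_0 = (x_\varepsilon(t_i), y_\varepsilon(t_i))$ and $v = w_i$) gives $E_{i+1} \leq E_i \exp[(t_{i+1}-t_i) L (1+1/\varepsilon)]$. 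On a linearized segment, Lemma~\ref{Lem:estim:time_linearized sol} gives $E_{i+1} \leq (E_i + \theta_\varepsilon^2 \cK_1^{(i)}) \cK_2^{(i)}$, introducing the only source of additive error.

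For the estimate~\eqref{eq:lemtilde1}, the crux is controlling the product of Gronwall factors. The number $N$ of segments is at most $TC_g/\delta + 1$, since Lemma~\ref{Lem:estim_time_exact_sol} ensures every exact segment has length at least $\delta/C_g$. For each linearized segment starting at $\partial(M_0 + B(0,2\delta))$, the quantity $\mathrm{dist}(v,M_0+B(0,\delta)) = \delta$, so $\cK_2^{(i)}$ is bounded by a constant depending only on $\delta$, and similarly for $\cK_1^{(i)}$; since $N$ is also a $\delta$-constant, the product $\prod_{i \text{ lin}} \cK_2^{(i)}$ is bounded by some $C(\delta)$. Combining with $\prod_{i \text{ exact}} \exp[(t_{i+1}-t_i) L(1+1/\varepsilon)] \leq e^{LT(1+1/\varepsilon)}$ (the exponents telescope because exact segment lengths sum to at most $T$), the total propagator from $t_i$ to $T$ is bounded by $C(\delta) e^{LT/\varepsilon}$, uniformly in $i$. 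Standard discrete Gronwall summation with $E_0 = 0$ and $N$ additive contributions of size $\theta_\varepsilon^2 \cK_1^{(i)} \cK_2^{(i)}$ yields $E_N \leq \theta_\varepsilon^2 C(\delta) e^{LT/\varepsilon}$, and because the same estimate applies at arbitrary intermediate $t$, \eqref{eq:lemtilde1} follows. The choice $\theta_\varepsilon = o(e^{-TL/(2\varepsilon)})$ then gives convergence to $0$.

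For~\eqref{eq:lemtilde2} and~\eqref{eq:lemtilde3}, the key geometric observation is that the patched construction keeps $(\tilde x_\varepsilon, \tilde y_\varepsilon) \in M_0 + B(0,2\delta)$ for all $t$ in~\eqref{eq:lemtilde2} and all $t \geq t_1$ in~\eqref{eq:lemtilde3}: exact segments by construction terminate upon reaching $\partial(M_0+B(0,2\delta))$, while linearized segments consist of steps whose $x$-coordinate is monotone toward $M_0$ by~\eqref{order_convergence_ylin}, so they cannot overshoot the $2\delta$-shell either. Once this confinement is established, a direct computation proves the projection bound: for any $(\tilde x,\tilde y) \in M_0 + B(0,2\delta)$, pick $(x^*,y^*) \in M_0$ within Euclidean distance $2\delta$; since $x^* \in [F_-(y^*), F_+(y^*)]$ and $F_\pm$ are $L_\pm$-Lipschitz by~\ref{a:Fpm}, the point $\tilde x$ lies within $2(1+L_\pm)\delta$ of $[F_-(\tilde y), F_+(\tilde y)]$, which is precisely the distance from $\tilde x$ to $\tilde p_\varepsilon$. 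For~\eqref{eq:lemtilde3} one further applies Lemma~\ref{Lem:estim:time_linearized sol} to the initial linearized segment starting from $w_0 \notin M_0 + B(0,\delta)$, using $\mathrm{dist}(w_0, M_0+B(0,\delta)) \leq \mathrm{diam}(M)$ to obtain the uniform bound $t_1 \leq \varepsilon C_0(\delta)$ for some $\delta$-dependent constant.

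The main obstacle is the bookkeeping in Part 1: naively the linearized Gronwall factors $\cK_2^{(i)} \sim e^{2L(t_{i+1}-t_i)/\varepsilon}$ look dangerous, but the essential observation saving the estimate is that total time in linearized segments is $O(\varepsilon/\delta)$ (product of $N = O(1/\delta)$ segments each of length $O(\varepsilon)$), so the $1/\varepsilon$ inside each factor cancels against the short segment length and $\prod \cK_2^{(i)}$ reduces to a purely $\delta$-dependent constant. Only the exact segments, spanning time $O(T)$, contribute the unavoidable $e^{LT/\varepsilon}$ factor which must be absorbed by the choice of $\theta_\varepsilon$.
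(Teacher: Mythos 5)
Your proposal is correct and follows essentially the same approach as the paper's proof: inductive error propagation across the switching times of Definition~\ref{Def:approximate_solution}, invoking Lemma~\ref{Lem:estim:time_linearized sol} on linearized segments and Lemma~\ref{Lem:estim_time_exact_sol} on exact segments, with the exact-segment Gronwall exponents telescoping to $e^{TL(1+1/\varepsilon)}$ while the $O(1/\delta)$ many linearized-segment factors contribute only a $\delta$-dependent constant. Your fleshed-out geometric argument for \eqref{eq:lemtilde2}--\eqref{eq:lemtilde3}, via confinement of $(\tilde{x}_\varepsilon,\tilde{y}_\varepsilon)$ near $M_0+B(0,2\delta)$ together with the Lipschitz property of $F_\pm$ from~\ref{a:Fpm}, is a reasonable unpacking of what the paper compresses into ``follows directly from the definition of $\tilde{p}_\varepsilon$.''
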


\begin{proof}
During the proof we denote $\tilde{x}:=\tilde{x}_{\varepsilon}$, 
$\tilde{y}:=\tilde{y}_{\varepsilon}$, $x:=x_\varepsilon$, and 
$y:=y_\varepsilon$. By Lemma~\ref{Lem:estim_time_exact_sol}, 
each subinterval of $J_T$ in which $(\tilde{x},\tilde{y})$ behaves according 
to~\eqref{Eq:evol_epsilon_y}-\eqref{Eq:evol_epsilon_x} can be estimated from 
below by $\delta/C_g$. Hence, the total number $K$ of subintervals 
in Definition~\ref{Def:approximate_solution} is bounded from above by 
$(TC_g)/\delta$. Let $K=K_{l}+K_{e}$, where $K_{l}$ is the number of 
time intervals in which $(\tilde{x},\tilde{y})$ is defined 
via~\eqref{Eq:evol_lin_y_1}-\eqref{Eq:evol_lin_x_2}, and where $K_{e}$ 
denotes the number of time intervals in which $(\tilde{x},\tilde{y})$ is
given by~\eqref{Eq:evol_epsilon_y}-\eqref{Eq:evol_epsilon_x}. We first 
assume $w_0=(x_{w_0},y_{w_0})\in M\backslash \overline{(M_0+B(0,\delta))}$. 
In this case, $K_{l}\in \{K_{e},K_e +1\}$.
Without loss of generality, we assume $K_{l}=K_e+1=\frac{K+1}{2}$.
By Lemma~\ref{Lem:estim:time_linearized sol} and because 
$\varepsilon\leq \frac{\varepsilon_\delta}{2}$, the first time 
interval $(t_0,t_1)$ in Definition~\ref{Def:approximate_solution} is bounded 
from above by
\benn
|t_1-t_0| \leq \varepsilon \left(  
\frac{4(1+L_\pm)\mathrm{dist}(w_0, M_0+B(0,\delta))}{f_m(\delta)}
+ \frac{1}{C_{Df}}\right)=:\varepsilon C_0(\delta).
\eenn
We introduce the notation
\beann
&&C_1(\delta):=\max \left\{C_0(\delta),\left( \frac{4(1+L_\pm)\delta}{f_m(\delta)}+ 
\frac{1}{C_{Df}}\right)\right\}, \quad 
C_2(\delta):= C_1(\delta)\left(2C_{D^2}+ 
\frac{\varepsilon_\delta^3}{f_m(\delta)^2}\right),\\
&&C_3(\delta):= 2L C_1(\delta).
\eeann
Lemma~\ref{Lem:estim:time_linearized sol} implies that for 
$i\in\left\{0,\cdots, \frac{K-1}{2}\right\}$ and $t\in [t_{2i},t_{2i+1}]$ 
we may estimate the difference between the full and approximate solutions
by
\benn
| x(t) - \tilde{x}(t)|+ | y(t) - \tilde{y}(t)|
\leq \left[ | x(t_{2i}) - x_{w_{2i}}| + | y(t_{2i}) - y_{w_{2i}}|+ 
\theta^2 C_2(\delta)\right]\txte^{C_3(\delta)}.
\eenn
Lemma~\ref{Lem:estim_time_exact_sol} proves that for 
$i\in\left\{1,\cdots \frac{K-1}{2}\right\}$ and $t\in [t_{2i-1},t_{2i}]$ 
we obtain
\benn
| x(t) - \tilde{x}(t)|+ | y(t) - \tilde{y}(t)|\leq 
\left[ | x(t_{2i-1}) - x_{w_{2i-1}}| + | y(t_{2i-1}) - y_{w_{2i-1}}|
\right]
\txte^{(t_{2i}-t_{2i-1})L \left(1+\frac{1}{\varepsilon}\right)}.
\eenn
This together with $\sum_{i=1}^{\frac{K-1}{2}}(t_{2_i}-t_{2_i-1})\leq T$ 
implies that we can estimate for any $t\in \overline{J_T}$:
\begin{align*}
&| x(t) - \tilde{x}(t)|
	+ | y(t) - \tilde{y}(t)|\\
&\leq \left[ | x(t_{0}) - x_{w_{0}}| + | y(t_{0}) - y_{w_{0}}|+
\frac{K-1}{2}\theta^2 C_2(\delta)\right]
\exp\left(\frac{K-1}{2}C_3(\delta)+TL\left(1+\frac{1}{\varepsilon}\right)
\right)\\
&=\frac{K-1}{2}
\theta^2 C_2(\delta)
\exp\left(\frac{K-1}{2}C_3(\delta)+TL\left(1+\frac{1}{\varepsilon}\right)
\right)=:\theta^2\txte^{\frac{TL}{\varepsilon}}C_4(\delta).
\end{align*}
Analogous estimates apply for $w_0=(x_{w_0},y_{w_0})\in (M_0+B(0,\delta))$.
This proves~\eqref{eq:lemtilde1}. Now the 
results~\eqref{eq:lemtilde2}-\eqref{eq:lemtilde3} follow directly from
the definition of the mapping $\tilde{p}_\varepsilon$ 
in~\eqref{eq:deftildeproject}. 
\end{proof}

Finally we can prove the main result. Some elements of the proof of 
Theorem~\ref{thm:projection} will be kept. However, we can improve the
convergence norm and also simplify the argument that in the singular
limit, solutions satisfy the variational inequality for the generalized 
play operator.

\begin{proof}(of Theorem~\ref{Thm:approimation})
As in the proof of Theorem~\ref{thm:projection}, we shall argue with 
sequences and converging subsequences $\{\varepsilon_k\}$. There exist 
functions $\overline{x},\overline{y}\in \mathrm{W}^{1,q}(J_T)$ such that
\be
p_{\varepsilon_k} \rightarrow \overline{x} \text{ and } y_{\varepsilon_k} 
\rightarrow \overline{y}
\ee
as $\varepsilon_k \rightarrow 0$ weakly in $\mathrm{W}^{1,q}(J_T)$ and 
strongly in $\mathrm{C}(\overline{J_T})$ respectively. Furthermore, one 
shows as previously that $x_{\varepsilon_k}\ra \overline{x}$ in $L^q(J_T)$,
that the convergence of $y_{\varepsilon_k}$ is strong and that $\overline{y}$ 
solves~\eqref{Eq:evol_with_time_x_1}-\eqref{Eq:evol_with_time_x_2} 
with $x=\overline{x}$. The strategy is now to improve the convergence
norm and to simplify the argument that $\overline{x}$ 
solves~\eqref{Eq:evol_with_time_y_1}-\eqref{Eq:evol_with_time_y_3}.

Let $\eta>0$ be arbitrary. Recall that by assumption~\ref{a:Fpm} we must 
have $F_-<F_+$ and both functions are monotone increasing. Using these
facts and the definition~\eqref{eq:deftildeproject} of 
$\tilde{p}_\varepsilon$ yields
\benn
|\tilde{p}_\varepsilon(t) - p_\varepsilon(t)|
\leq  \max\{ |F_-(y_\varepsilon(t)) - F_-(\tilde{y}_\varepsilon(t))|, 
|F_+(y_\varepsilon(t)) - F_+(\tilde{y}_\varepsilon(t))|, 
|x_\varepsilon(t) - \tilde{x}_\varepsilon(t)| \}
\eenn
for all $t\in J_T$. Consider the assumptions and the notation from 
Definition~\ref{Def:approximate_solution} as well as the notation from 
Lemma~\ref{Lem:estim:time_approx_sol}. If $w_0=(x_{w_0},y_{w_0})=(x_0,y_0)\in M\backslash 
\overline{M_0}$ then we set 
\be
\label{eq:deltaeta1}
\delta_\eta:=\min \left\{ \mathrm{dist}(w_0,M_0),
\frac{\eta}{6(1+L_\pm)}\right\} 
\ee
in Definition~\ref{Def:approximate_solution}, so that 
$w_0 \in M\backslash \overline{(M_0+B(0,\delta_\eta))}$, and define 
$\varepsilon_{\delta_\eta}$ as in Definition~\ref{Def:approximate_solution}. 
In this case, we further let
\benn
t_\varepsilon := t_1 \leq \varepsilon C_0(\delta_\eta)
\eenn
for $\varepsilon\in \left(0,\frac{\varepsilon_{\delta_\eta}}{2}\right)$
with $t_1$ and $C_0(\delta_\eta)$ from Lemma~\ref{Lem:estim:time_approx_sol}.
If $w_0=(x_{w_0},y_{w_0})\in \overline{M_0}$ then let
\be
\label{eq:deltaeta2}
\delta_\eta:=\frac{\eta}{6(1+L_\pm)} 
\ee
in Definition~\ref{Def:approximate_solution} 
and again consider $\varepsilon_{\delta_\eta}$ from 
Definition~\ref{Def:approximate_solution}. Then we can define $t_\varepsilon = 0$ 
for $\varepsilon\in \left(0,\frac{\varepsilon_{\delta_\eta}}{2}\right)$. Now we 
can find $\varepsilon_\eta \in \left(0,\frac{\varepsilon_{\delta_\eta}}{2}\right)$ 
such that for all $\varepsilon \in (0,\varepsilon_\eta)$ the following key bound
is satisfied 
\be
\label{eq:epsiloneta}
\max\{2,L_-,L_+\}\theta_\varepsilon^2\txte^{\frac{TL}{\varepsilon}}
C(\delta_\eta)<\frac{\eta}{3}.
\ee
Lemma~\ref{Lem:estim:time_approx_sol} can then be used to estimate for all 
$\varepsilon \in (0,\varepsilon_\eta)$ the fast variable
\bea
\nonumber
\max_{t\in \overline{J_T}: t \geq t_\varepsilon} 
|x_\varepsilon(t) - p_\varepsilon(t)| &\leq& 
\max_{t\in \overline{J_T}: t \geq t_\varepsilon} |x_\varepsilon(t) - 
\tilde{x}_\varepsilon(t)|+ |\tilde{x}_\varepsilon(t) - \tilde{p}_\varepsilon(t)| 
+ |\tilde{p}_\varepsilon(t) - p_\varepsilon(t)|\\
&\leq & 2\delta_\eta(1+L_\pm) + \max\{2,L_\pm\}
\theta_\varepsilon^2\txte^{\frac{TL}{\varepsilon}}
C(\delta_\eta) < \frac{2}{3}\eta.\label{eq:fastesttemp}
\eea
Given a subsequence $\varepsilon_k\ra 0$, we choose $k_\eta>0$ such that 
$\varepsilon_k\in (0,\varepsilon_\eta)$ and
\be
\label{eq:keta}
\max_{t\in \overline{J_T}} |y_{\varepsilon_k}(t) - \overline{y}(t)| 
+ |p_{\varepsilon_k}(t) - \overline{x}(t)| <\frac{\eta}{3} 
\ee
for all $k\geq k_\eta$. From the last two maximum norm 
bounds~\eqref{eq:fastesttemp}-\eqref{eq:keta} it then 
follows that we have 
\bea
\|x_{\varepsilon_k} - \overline{x}\|_{ \mathrm{C}[t_{\varepsilon_k},T] }+
\|y_{\varepsilon_k} - \overline{y}\|_{\mathrm{C}(\overline{J_T})}\nonumber 
	&\leq& \|y_{\varepsilon_k} - \overline{y}\|_{\mathrm{C}(\overline{J_T})} 
	+ \max_{t\in[t_{\varepsilon_k},T]} |x_{\varepsilon_k}(t) 
	- p_{\varepsilon_k}(t)| + |p_{\varepsilon_k}(t) - \overline{x}(t)| \nonumber\\
	&<& \eta \label{estim:Step4}.
\eea
The bound~\eqref{estim:Step4} is the crucial step. It is going to provide
that the variational inequality is solved and it is going to show the 
convergence in the $C^0$-norm. We start by proving the former, i.e., by
showing that $\overline{x}$ 
solves~\eqref{Eq:evol_with_time_y_1}-\eqref{Eq:evol_with_time_y_3} with 
$y=\overline{y}$. The proof of the properties
\benn
\overline{x}(0)=\min\{\max\{x_0, F_-(y_0)\},F_+(y_0)\}
\eenn	
and
\benn
\overline{x}(t)\in [F_-(\overline{y}(t)), F_+(\overline{y}(t))]\qquad  
\forall\ t\in \overline{J_T}
\eenn
remain the same as in the proof of Theorem~\ref{thm:projection}. Let $t_0\in J_T$ 
be given. Suppose that $\overline{x}(t_0)=F_+(\overline{y}(t_0))$. Assume first 
that $g(x,y,t)<0$ in a neighbourhood $U\times I\subset M\times J_T$ of 
$(\overline{x}(t_0),\overline{y}(t_0),t_0)$ and $(\overline{x}(t),\overline{y}(t)) 
\in U$ for all $t \in I$. Using \cite[Theorem 4.15 and Theorem 4.16]{RudinFunc} and 
continuity of $g$, after eventually making $U$ smaller, we may assume that 
\be
\label{eq:Cgreminder}
-C_g<g(x,y,t)<-c<0\qquad \text{for some constant $c$ and all $(x,y,t)\in U\times I$.}
\ee
Now we are going to apply the crucial bound~\eqref{estim:Step4}. Define a parameter 
$\eta>0$ (which will be fixed later) with 
\benn
\eta < \min_{t\in I }\mathrm{dist}((\overline{x}(t),\overline{y}(t)), \partial U). 
\eenn
In dependence of this parameter, let $\delta_\eta,\varepsilon_\eta,k_\eta$ be 
defined as in \eqref{eq:deltaeta1}-\eqref{eq:deltaeta2}, \eqref{eq:epsiloneta} 
and \eqref{eq:keta}. Moreover, we choose $\varepsilon_I\in (0,\varepsilon_\eta)$ 
such that 
\benn
t_\varepsilon \leq \varepsilon C_0(\delta_\eta) < \min\{t: t\in I \} \qquad 
\text{for all $\varepsilon \in (0,\varepsilon_I)$}.
\eenn
Finally we define $k_I \geq k_\eta$ such that $\varepsilon_k \in (0,\varepsilon_I)$ 
for all $k\geq k_I$. These choices then lead us to the estimate
\be
	\|y_{\varepsilon_k} - \overline{y}\|_{\mathrm{C}(\overline{J_T})} 
+ \|x_{\varepsilon_k} - \overline{x}\|_{ \mathrm{C}[t_{\varepsilon_k},T] } 
< \eta\label{estim:Step5}
\ee
for all $k\geq k_I$. Therefore, we obtain 
$(x_{\varepsilon_k}(t),y_{\varepsilon_k}(t),t)\in U\times I$ for all 
$k\geq k_I$ and $t\in I$. Because $(\overline{x}(t_0),\overline{y}(t_0))\in 
\partial M_0$ by assumption, \eqref{estim:Step5} yields $\mathrm{dist}((x_{\varepsilon_k}(t_0),
y_{\varepsilon_k}(t_0)), \partial M_0)<\eta$ for all $k\geq k_I$.

Recall that $-C_g<g(x,y,t)<-c<0$ for all $(x,y,t)\in U\times I$ by~\eqref{eq:Cgreminder}. 
This fact applied in the fast-slow system~\eqref{Eq:evol_epsilon_y}-\eqref{Eq:evol_epsilon_x} 
implies that for all $k\geq k_I$, $y_{\varepsilon_k}$ is monotone decreasing in $I$ 
with $-C_g < \dot{y}_{\varepsilon_k} < -c$. We also already know from~\eqref{eq:fastesttemp}
that
\benn
\max_{t\in \overline{J_T}: t \geq t_\varepsilon} |x_\varepsilon(t) - p_\varepsilon(t)| 
<\frac{2\eta}{3}\quad \text{for $\varepsilon\in (0,\varepsilon_\eta)$} 
\eenn
and since $(p_\varepsilon,y_\varepsilon)\in M_0$ for all $\varepsilon >0$ by 
definition and since $t_\varepsilon < \max\{ t \in I \}$, this yields 
\benn
\max_{t\in I}\mathrm{dist}((x_{\varepsilon_k}(t),y_{\varepsilon_k}(t)), M_0)
<\frac{2\eta}{3}\quad \text{ for all $k\geq k_I$}. 
\eenn		
This fact can be combined with the observation that there is no fast 
flow inside the critical manifold, i.e., $\dot{x}_{\varepsilon_k}(t)=0$ 
if $(x_{\varepsilon_k}(t),y_{\varepsilon_k}(t))\in M_0$ and with the fact that 
$\mathrm{dist}((x_{\varepsilon_k}(t_0),y_{\varepsilon_k}(t_0)), \partial M_0)<\eta$ 
and $-C_f < \dot{y}_{\varepsilon_k} < -c$ in $I$ for all $k\geq k_I$. In particular,
we may now conclude that
\benn
\max_{t\in I\cap [t_0,T]}\mathrm{dist}((x_{\varepsilon_k}(t),y_{\varepsilon_k}(t)), 
\partial M_0)<\eta\quad \text{ for all $k\geq k_I$}.
\eenn
Since we are still free in our choice of $\eta$ (which then determines $k_I$), 
the last estimate, together with the crucial bound~\eqref{estim:Step5}, proves 
that $(\overline{x}(t),\overline{y}(t))\in \partial M_0$ and therefore 
$\overline{x}(t)=F_+(\overline{y}(t))$ for all $t\in I\cap [t_0,T]$. Moreover, 
it follows that $\overline{y}$ is monotone decreasing in $I$. Since $F_+$ is 
monotone increasing, this implies that $\overline{x}$ is monotone increasing 
in $I \cap [t_0,T]$, so that 
$\dot{\overline{x}}(t)<0$ for a.e. $t\in I\cap [t_0,T]$.
		
The case when $g(x,y,t)>0$ in a neighbourhood of $(\overline{x}(t_0),\overline{y}(t_0),t_0)$ 
leads to a contradiction as we would have moved already inside $M_0$ earlier in
this case. If $t_0\in J_T$ is a time such that $g(x,y,t)\geq 0$ in $(U\times I )\cap 
(M\times [t_0,T])$ for some neighbourhood $U\times I$ of $(\overline{x}(t_0),
\overline{y}(t_0),t_0)$ then $\overline{y}$ is monotone increasing and $\overline{x}$ 
is constant in $I \cap [t_0,T]$ with $\overline{x}(t)=F_+(\overline{y}(t_0))$.
The cases when $\overline{x}(t_0)=F_-(\overline{y}(t_0))$ or $(x(t_0),y(t_0))\in 
\mathrm{int}(M_0)$ are treated in a similar manner.
Therefore, we have shown that $\overline{x}$ 
solves~\eqref{Eq:evol_with_time_y_1}-\eqref{Eq:evol_with_time_y_3} with $y=\overline{y}$.
	
Uniqueness of	$\overline{x}$ and $\overline{y}$ and convergence of the whole sequence 
follow just as in the proof of Theorem~\ref{thm:projection}. More precisely, we may repeat the
steps with any subsequence converging to zero and~\eqref{estim:Step4} then proves~\eqref{convergence_yeps_hyst}.
From~\eqref{convergence_yeps_hyst} we deduce the convergence result of $x_\varepsilon$. 
This yields the convergence result also for $y_\varepsilon$.
\end{proof}

\section{An Application to Forced Oscillations}
\label{sec:appl}

So far, we have studied the singular limit convergence guided by Netushil's 
conjecture of fast-slow systems coupled with hysteresis operators. Our results in
Sections~\ref{sec:main1}-\ref{sec:main2} were fully \emph{rigorous}. However,
it will be of interest to see, how we can practically analyze fast-slow 
systems~\eqref{Eq:evol_epsilon_y}-\eqref{Eq:evol_epsilon_x}. In this section
we provide a \emph{numerical} and \emph{formal} analysis of an important
subclass of~\eqref{Eq:evol_epsilon_y}-\eqref{Eq:evol_epsilon_x}.\medskip

First, note that~\eqref{Eq:evol_epsilon_y}-\eqref{Eq:evol_epsilon_x} can 
be re-written in non-autonomous form as
\be
\label{eq:forcedfs}
\begin{array}{rcl}
\varepsilon \frac{\txtd x}{\txtd \tau} &=& f(x,y),\\
\frac{\txtd y}{\txtd \tau} &=& g(x,y,t),\\
\frac{\txtd t}{\txtd \tau} &=& \omega,
\end{array}
\ee
where we introduce an additional parameter $\omega$.
From a dynamical systems point of view, fast-slow problems of the 
form~\eqref{eq:forcedfs} provide many highly investigated classical examples. 
For example, if we consider $f$ as the classical cubic non-linearity and
assume that $g$ is periodic in $t$, say for concreteness, 
\be
f_{\textnormal{vdP}}(x,y)=y-\frac{x^3}{3}+x,\qquad g(x,y,t)=g(x,y,t+2\pi),
\ee
then~\eqref{eq:forcedfs} has a very prominent representative given by the
forced van der Pol oscillator~\cite{vanderPol2,KuehnBook,Guckenheimer2} usually 
written as
\be
\label{eq:forcedvdP}
\begin{array}{rcl}
\varepsilon \frac{\txtd x}{\txtd \tau} &=& f_{\textnormal{vdP}}(x,y),\\
\frac{\txtd y}{\txtd \tau} &=& a \sin(2\pi\theta )-x,\\
\frac{\txtd \theta}{\txtd \tau} &=& \omega. 
\end{array}
\ee
where $(x,y,\theta)\in\R^2\times S^1$ with $S^1=[0,1]/(0\sim 1)$ is the 
circle, $(y,\theta)$ are the slow variables and $a,\omega$ are the main 
amplitude and phase parameters used in bifurcation studies 
of~\eqref{eq:forcedvdP}; see~\cite{GuckenheimerHoffmanWeckesser2,Boldetal,
GrasmanNijmeijerVeling,SzmolyanWechselberger}. 
The forced van der Pol equation is also one of the very few ODE models,
where it has been rigorously proven that chaotic oscillations may 
occur~\cite{Haiduc1}. The model has also a strong link to one-dimensional
or almost one-dimensional return maps and chaotic 
dynamics~\cite{GuckenheimerWechselbergerYoung}. The forced van der Pol
equation is still being studied very actively~\cite{Burkeetal1}.\medskip

In our setting of generalized play operators and Netushil's conjecture, we
have considered a different class of fast variable vector fields specified 
by assumptions~\ref{a:fg}-\ref{a:Fpm}. This naturally raises the question,
what actually happens dynamically, if we replace the fast-vector field in
the van der Pol oscillator with one satisfying~\ref{a:fg}-\ref{a:Fpm}. Our 
main example we propose to study is
\be
\label{eq:fnonlinex}
f(x,y)=\left\{
\begin{array}{rl}
y-x-1 ~ &\text{ if $x<y-1$,}\\
y-x+1 ~ &\text{ if $x>y+1$,}\\ 
0 ~&\text{ else,}\\
\end{array}
\right.
\ee 
for the fast variable. Piecewise linear approximations are very
classical in fast-slow systems, e.g., they have been studied in the van der 
Pol context many times~\cite{Levinson} but are still of high current 
interest~\cite{Desrochesetal1,Desrochesetal2}. For the slow variable, we 
propose a linear term and a sinusoidal forcing
\be
\label{eq:gnonlinex}
g(x,y,t)= a \sin(2\pi t)+b x + cy
\ee
for parameters $a,b,c\in\R$. The strategy to start
with the lowest order Taylor expansion is well-known in fast-slow 
systems for the slow variables~\cite{Guckenheimer7,SzmolyanWechselberger1}
and so is starting with the lowest harmonic in many other 
contexts~\cite{Kuramoto}. One checks that~\ref{a:fg}-\ref{a:gbound} hold 
with $F_+(y)=y+1$ and $F_-(y)=y-1$ except for the boundedness of $\cG(x)$;
however, we shall observe that $x$ is going to stay bounded for certain
parameter choices to be investigated below so we can just cut off $\cG(x)=bx$
smoothly outside a compact set.\medskip

\begin{figure}[htbp]
	\centering
\begin{overpic}[width=1.0\linewidth]{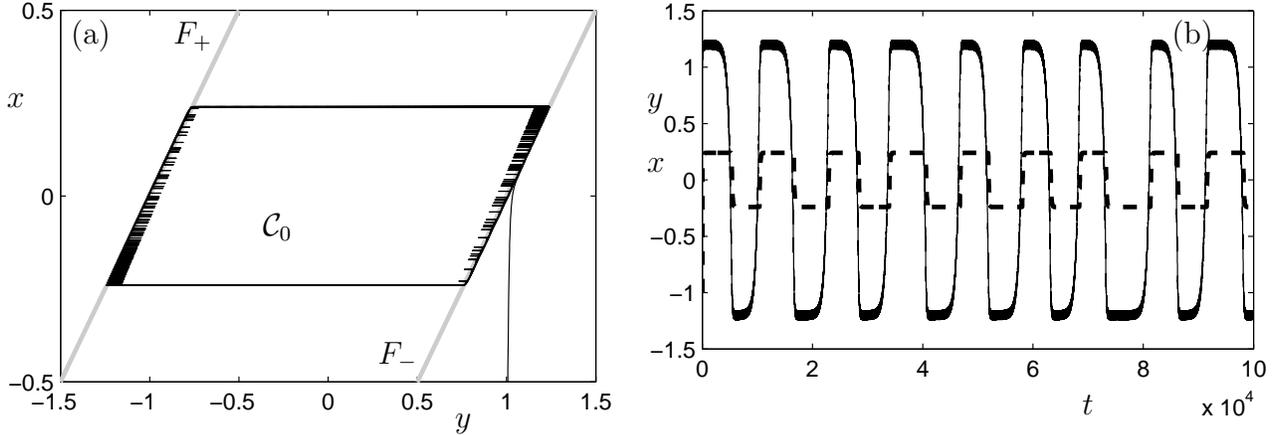}
\put(5,30){(a)}
\put(91,30){(b)}
\put(0,25){$x$}
\put(35,0){$y$}
\put(20,15){$\cC_0$}
\put(13,30){$F_+$}
\put(29,5){$F_-$}
\put(84,1){$t$}
\put(50,25){$y$}
\put(50,20){$x$}
\end{overpic}
\caption{\label{fig:05}Direct numerical integration of the fast-slow ODEs~\eqref{eq:forcedfs} 
with nonlinearities given by~\eqref{eq:fnonlinex}-\eqref{eq:gnonlinex}; the parameters
are chosen as $a=1$, $b=-1$, $c=\frac15$, $\omega=4$, and $\varepsilon=0.01$. The initial
condition was chosen outside and $\cO(1)$-separated from $\cC_0$. (a) Phase portrait showing 
a typical trajectory (black curve) projected into the $(y,x)$-plane, i.e., not 
explicitly showing the non-autonomous periodic $\tau$-direction. $\cC_0$ lies between the 
two curves (gray) defined by $F_\pm$ and $\cC_0$ has the same dimension as the ambient phase.
(b) Time series of the trajectory with the $y$-coordinate (solid curve) and the $x$-coordinate 
(dashed curve). One clearly observes small scale behaviour in the region, where the fast 
and slow variables interact.}
\end{figure}

As a first step, we would like to check, whether we can find any interesting dynamics
by selecting the basic nonlinearities~\eqref{eq:fnonlinex}-\eqref{eq:gnonlinex}. 
Figure~\ref{fig:05} shows the results of numerical integration. We have selected
an initial condition far separated of the critical manifold
\be
\cC_0=\{(x,y,\tau)\in\R^2\times S^1:y-1\leq x\leq y+1\}.
\ee 
The initial condition gets attracted very fast towards $\cC_0$ as shown in 
Figure~\ref{fig:03}(a) as expected already from the theoretical results based 
upon assumption~\ref{a:cm}. The dynamics near the boundary
\be
\partial \cC_0 = \{x=F_-(y)\}\cup \{x=F_+(y)\}=:\cC_-\cup \cC_+
\ee
is a lot more delicate. We observe in Figure~\ref{fig:05} the case of many small 
amplitude oscillations (SAOs) near both parts of the boundary. Furthermore, there 
are relatively slow jumps between $\cC_-$ and $\cC_+$ in comparison to the long 
very slow drift time near each boundary piece. Essentially, we observe oscillations,
which look similar to classical relaxation oscillations~\cite{MisRoz,Grasman,KuehnBook},
just with high-frequency fast SAOs overlayed near the slowest scale pieces and the 
jumps in the relaxation cycle still occur on the slow time scale. Figure~\ref{fig:05}
does not provide an indication, whether the oscillations are actually periodic or
potentially even chaotic.\medskip

The SAOs near $F_\pm$ are easy to explain formally. Suppose we use the 
standard slow subsystem reduction just along the lines $\cC_\pm$, then 
we obtain
\be
\label{eq:1DODE}
\frac{\txtd y}{\txtd t} = a \sin(2\pi \omega t)+b x + cy=
a \sin(2\pi \omega t)+(b+c)y \pm b,\qquad y(0)=y_0.
\ee

\begin{figure}[htbp]
	\centering
\begin{overpic}[width=1.0\linewidth]{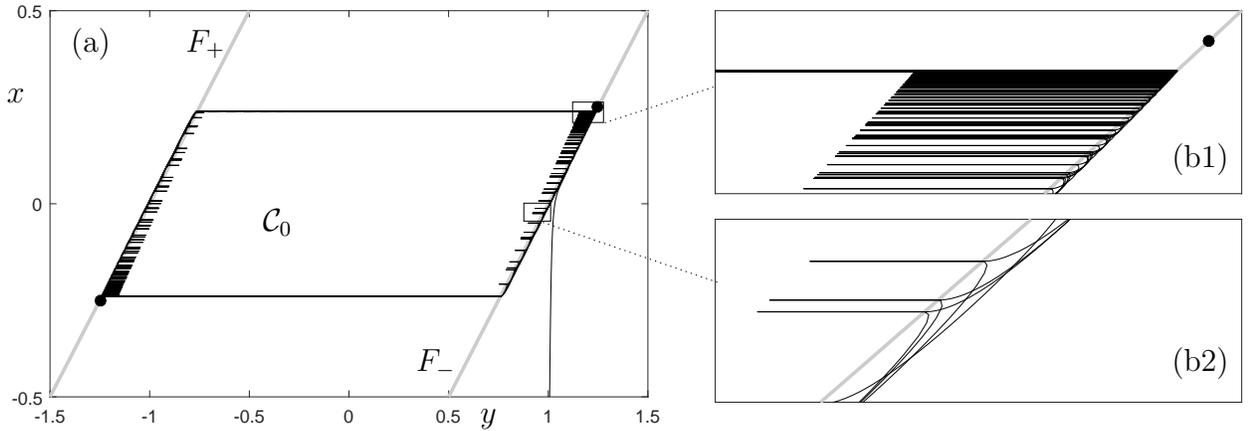}
\put(5,29){(a)}
\put(91,20){(b1)}
\put(91,4){(b2)}
\put(0,25){$x$}
\put(37,0){$y$}
\put(20,15){$\cC_0$}
\put(14,29){$F_+$}
\put(32,4){$F_-$}
\end{overpic}
\caption{\label{fig:06}Direct numerical integration of the fast-slow ODEs~\eqref{eq:forcedfs} 
with nonlinearities given by~\eqref{eq:fnonlinex}-\eqref{eq:gnonlinex}; the parameters
are chosen as $a=1$, $b=-1$, $c=\frac15$, $\omega=4$, $\varepsilon=0.01$ and final 
time $T=5\cdot 10^4$. The initial condition was chosen outside and $\cO(1)$-separated 
from $\cC_0$. (a) Phase portrait showing a typical trajectory (black curve) projected 
into the $(y,x)$-plane. We have now also marked the two ``average'' equilibrium 
points $(Y_\pm,F_\pm(Y_\pm))$ as dots (black). (b1) Zoom near $(Y_-,F_-(Y_-))=
(1.25,0.25)$. (b2) Zoom near a typical region with dynamics well-approximated by 
the formal slow subsystem~\eqref{eq:1DODE} near the branch $\cC_-$.}
\end{figure}

The ODE~\eqref{eq:1DODE} can actually be solved explicitly. We denote the 
solutions corresponding to the respective signs in front of the constant term 
$\pm b$ by $y_\pm=y_\pm(t)$. We have
\be
\label{eq:ysolbase}
y_\pm(t)=Y_\pm+Y_\txte \txte^{(b+c)t}+Y_\txth(t)
\ee
where the individual (``constant, exponential prefactor, and harmonic'') terms 
are given by
\bea
Y_\pm&=& \mp \frac{b}{b+c}, \label{eq:ysol1}\\
Y_\txte&=& \pm \frac{b}{b+c}+\frac{2 \pi  a \omega }{(b+c)^2+4 \pi ^2 \omega ^2}+y_0, 
\label{eq:ysol2}\\
Y_\txth(t)&=& -\frac{a (b+c) \sin (2 \pi  t \omega )+2 \pi  
a \omega  \cos (2 \pi  t \omega )}{(b+c)^2+4 \pi ^2 \omega ^2}. \label{eq:ysol3}
\eea

\begin{figure}[htbp]
	\centering
\begin{overpic}[width=1.0\linewidth]{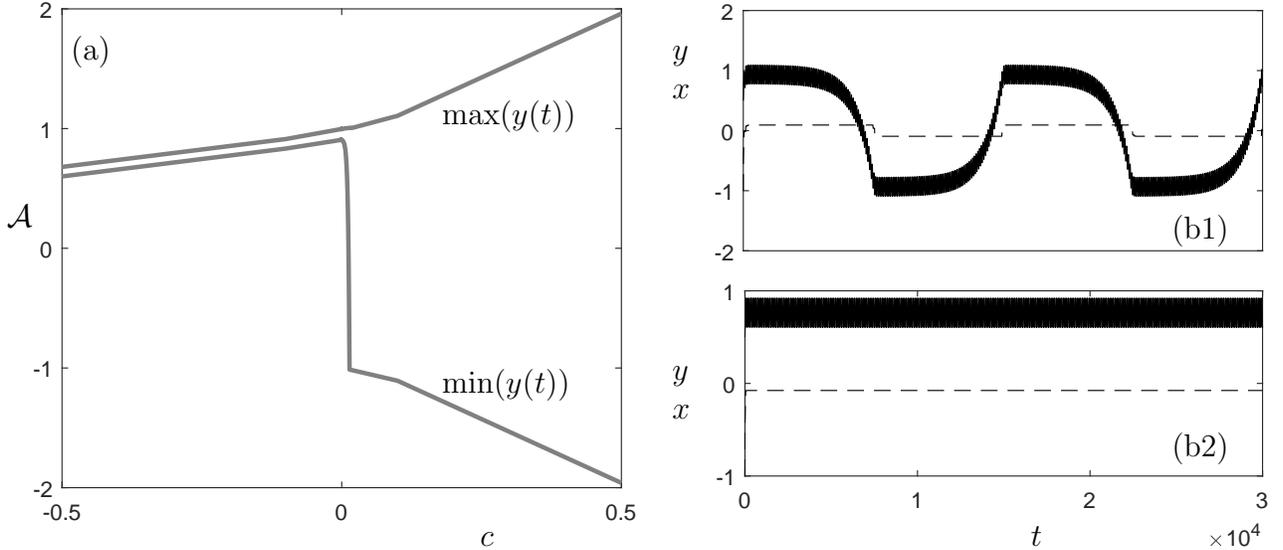}
\put(5,38){(a)}
\put(91,24){(b1)}
\put(91,7){(b2)}
\put(0,25){$\cA$}
\put(37,0){$c$}
\put(80,0){$t$}
\put(52,10){$x$}
\put(52,13){$y$}
\put(52,35){$x$}
\put(52,38){$y$}
\put(34,33){$\max(y(t))$}
\put(34,12){$\min(y(t))$}
\end{overpic}
\caption{\label{fig:07}Bifurcation diagram of the fast-slow ODEs~\eqref{eq:forcedfs} 
with nonlinearities given by~\eqref{eq:fnonlinex}-\eqref{eq:gnonlinex}; the parameters
are chosen as $a=1$, $b=-1$, $\omega=4$, and $\varepsilon=0.01$. (a) Main bifurcation
diagram varying the parameter $c$ and showing the maximum and minimum amplitudes $\cA$
of the variable $y$ for the global attractor. (b1) Time series for $c=0.1$. (b2) Time 
series for $c=-0.1$. The dashed curves are $x(t)$ and the solid curve with SAOs are 
$y(t)$.}
\end{figure}

The formal slow subsystem~\eqref{eq:1DODE} remains bounded for all $y_0\in\R$ 
if and only if $b+c\leq 0$. We shall not investigate the borderline case $b=-c$
here and just assume $b+c<0$ from now on. Then 
$Y_\txte \exp[(b+c)t]\ra 0$ as $t\ra +\I$ so the dynamics of $y_\pm(t)$ is a 
harmonic oscillation around the points $Y_\pm$, i.e., we have
\benn
\lim_{t\ra +\I}\frac{1}{t}\int_0^t y_\pm(s)~\txtd s = Y_\pm
\eenn
so we may view $Y_\pm$ as averaged equilibrium points. We now have to re-visit the
numerical results from Figure~\ref{fig:05}, which are presented in phase space
in a different way in Figure~\ref{fig:06}, where we clearly see that the slow 
subsystem approximation correctly describes the behaviour near the branches $\cC_\pm$,
i.e., we move upwards via the terms $Y_-+\txte^{(b+c)t}Y_\txte$ on the right branch 
$\cC_-$ towards $(y,x)=(Y_-,F_-(Y_-))$ and there are oscillations induced by the term 
$Y_\txth(t)$. Similarly we move downwards on the left branch $\cC_+$ with several
oscillations induced by the time-dependent terms.\medskip

\begin{figure}[htbp]
	\centering
\begin{overpic}[width=1.05\linewidth]{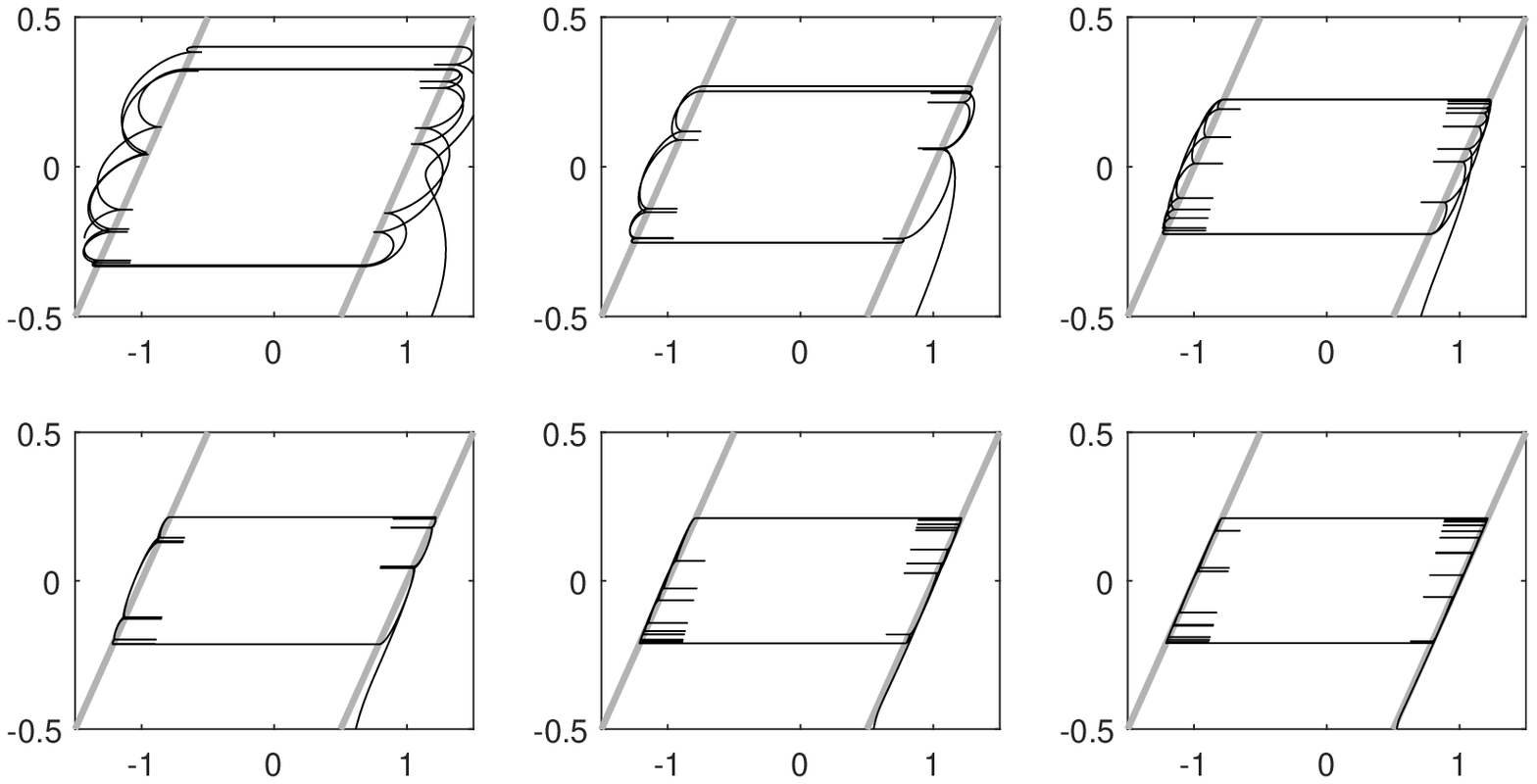}
\put(5,40){$x$}
\put(90,0){$y$}
\end{overpic}
\caption{\label{fig:08}Numerical simulation of the fast-slow ODEs~\eqref{eq:forcedfs} 
with nonlinearities given by~\eqref{eq:fnonlinex}-\eqref{eq:gnonlinex}; the parameters
are chosen as $a=1$, $b=-1$, $c=\frac15$, $\omega=4$. The parameter $\varepsilon$ is 
varied. The top row shows $\varepsilon=0.5,0.2,0.1$ and the bottom 
row $\varepsilon=0.05,0.02,0.01$.}
\end{figure}

The next natural question is, how the global periodic large oscillations are generated
under parameter variation. Figure~\ref{fig:07} shows a basic bifurcation diagram fixing
all parameters except $c$. We observe a very rapid growth of the amplitude of the oscillations
as $c$ passes through $c=0$. In particular, the transition could be viewed as being similar
to a canard-type explosion~\cite{DienerDiener,Eckhaus,DumortierRoussarie,KuehnBook} as the 
growth of the amplitude occurs near a part of $\cC_0$, which
is not normally hyperbolic and not attracting, i.e., inside $\textnormal{int}(\cC_0)$. Note 
carefully that if $x\approx 0$, then the slow equation for $y$ has only a small $x$-dependence, 
so $c$ can actually control growth or decay in this region. Indeed, in this case the singular
limit generalized play operator from Theorem~\ref{thm:projection} precisely shows an equilibrium 
point at $x=0$ for the $y$-dynamics if $c<0$.\medskip  

The last step we would like to check is to illustrate numerically the convergence of 
the fast-slow system to the system coupled with a generalized play operator depending upon $\varepsilon$.
Figure~\ref{fig:08} shows, how we converge from an oscillation with quite large 
excursions outside of $\cC_0$ to the singular limit generalized play operator, which
is entirely constrained to $\cC_0$ after the projection of the initial condition. We
observe that on the initial transient approach towards the oscillatory solution, there
are significant differences in the patterns of the SAOs for different values of 
$\varepsilon$. Furthermore, the patterns seem to stabilize a bit more as 
$\varepsilon\ra 0$ with more oscillations near the averaged equilibrium points
discussed above. This suggests that an asymptotic description of the precise patterns
could be possible locally but we leave this as an aspect for future work. Similarly,
one could consider a more detailed parameter study, which should also be considered
in another context focusing more on several classes of models. Here we only wanted
to illustrate the proof of Netushil's conjecture in our setting of coupled fast-slow systems
and show that the associated systems can have interesting nontrivial dynamics. \medskip

\textbf{Acknowledgments:} CK has been supported by a Lichtenberg Professorship of the 
VolkswagenStiftung. CM has been supported by the DFG through the International Research 
Training Group IGDK 1754 ``Optimization and Numerical Analysis for Partial Differential 
Equations with Nonsmooth Structures''. We also would like to thank two anonymous referees
for suggestions regarding the presentation of our results.

\bibliographystyle{alpha}
\bibliography{../my_refs}

\end{document}